\def\R{\mathbb{R}}
\def\1{\mathbbm{1}}
\def\d{\,\mathrm{d}}
\def \ddt{\frac{\mathrm{d}}{\mathrm{d}t}}
\def \ddt{\frac{\mathrm{d}}{\mathrm{d}t}}
\def\p{\partial}
\def\:{\colon}
\newtheorem{thm}{Theorem}[section]
\newtheorem{lem}[thm]{Lemma}
\newtheorem{prp}[thm]{Proposition}
\theoremstyle{definition}
\newtheorem{dfn}[thm]{Definition}
\theoremstyle{remark}
\newtheorem{rem}[thm]{Remark}
\theoremstyle{example}
\title{Asymptotic behaviour of neuron population models structured by
  elapsed-time}
\author{José A. Cañizo \and Havva Yoldaş}
\begin{document}

\maketitle

\begin{abstract}
  We study two population models describing the dynamics of
  interacting neurons, initially proposed by \citet*{PPS10,PPS14}. In
  the first model, the structuring variable $s$ represents the time
  elapsed since its last discharge, while in the second one neurons
  exhibit a fatigue property and the structuring variable is a generic
  ``state''. We prove existence of solutions and steady states in the
  space of finite, nonnegative measures. Furthermore, we show that
  solutions converge to the equilibrium exponentially in time in the
  case of weak nonlinearity (i.e., weak connectivity). The main
  innovation is the use of Doeblin's theorem from probability in order
  to show the existence of a spectral gap property in the linear
  (no-connectivity) setting. Relaxation to the steady state for the
  nonlinear models is then proved by a constructive perturbation
  argument.
\end{abstract}

\tableofcontents

\section{Introduction}
\label{sec:intro}

Several mean-field models have been proposed to understand the
electrical activity of a group of interacting neurons. They are all
based on simplified models for the electrical activity of a single
neuron, which give rise to an averaged partial differential equation
(PDE) or integro-differential equation suitable when the number of
neurons involved is large. In a rough approximation, neurons are
assumed to undergo some sort of natural ``charging'' process, with a
sudden ``discharge'' taking place in a stochastic way depending on the
current charge, the time since the last discharge, and the activity of
other connected neurons. A family of these models is structured by the
membrane potential of neurons; that is, the quantity under study is
the density $n(t,v)$ of neurons with potential $v$ at time $t$. The
mathematical theory of these models is recent; see
\citet{BookNeurobiologyv2, BrunelHakim4, Brunel2000, Brette2, Rossant,
  Caceres2011Analysis, Carrillo2013Classical, Caceres2016Blowup, article}.
The family of models we study in this paper is structured by the
elapsed time since the last discharge and has been proposed in
\citet{PPS10,PPS13,PPS14}. The first model is based on stochastic
simulations done in \cite{D12PHAM1998415}. It is a nonlinear version
of the conservative renewal equation (sometimes called as
McKendrick-Von Foerster equation) which has been well-studied by many
authors in the past as a model for a broad range of biological
phenomena like epidemic spread and cell division
\citep{Perthamebook:58110, Gyllenberg1990, refId0,
  iannelli1995mathematical, nla.cat-vn2330051, nla.cat-vn4515943,
  thieme2003mathematics}. The second model has many similarities with
a class of partial differential equations called
\textit{growth-fragmentation equations}. The nonlinear version we
study here was introduced in \cite{PPS14}, but on this general type of
equations we also mention the works in \citet{AFlin8doumicjauffret:hal-00408088,
  AFlin14doi:10.1142/S0218202506001480, AFlin19PERTHAME2005155,
  AFnonlin6doi:10.1080/17513750902935208, AFnonlin9ENGLER98,
  AFnonlin10FARKAS2007119, AFnonlin112011arXiv1102.2871G,
  AFnonlin13Laurenot2007, AFnonlin20SIMONETT2006580}.

The dynamics of an age-structured, interacting neuron population is
given by the integro-differential PDE
\begin{align}\label{eq:nofatigue}
  \begin{cases}
    \begin{split}
      &\frac{\p}{\p t} n(t,s) + \frac{\p}{\p s} n(t,s) + p(N(t),s)n(t,s) =0, \quad t,s > 0, \\
      &N(t):= n(t, s=0) =  \int_{0}^{+\infty} p(N(t),s)n(t,s)ds, \quad t>0,\\
      &n(t=0, s) = n_0(s), \quad s \geq 0.
    \end{split}
  \end{cases}
\end{align}
which models the evolution of a neuron population density $n(t,s)$
depending on time $t$ and the time $s$ elapsed since the last
discharge. Neurons randomly fire at a rate $p$ per unit of time, and
they re-enter the cycle from $s=0$ immediately after they fire, as
imposed through the boundary condition at $s=0$; the variable $s$ can
thus be regarded as the `age' of neurons, making a parallel with
models for birth and death processes. The global activity $N(t)$
denotes the density of neurons which are undergoing a discharge at
time $t$. If the firing rate $p$ increases with $N$, interactions are
\emph{excitatory}: the firing of neurons makes it more likely that
connected neurons will also fire; if $p$ decreases with $N$ then
interactions are \emph{inhibitory}. This nonlinear model preserves
positivity (see Section \ref{sec:nofatigue} for a rigorous proof) and
has the conservation property
\begin{equation}
  \label{cons}
  \frac{d}{dt} \int_{0}^{+\infty} n(t,s) \d s = 0.
\end{equation}
In particular, this ensures that if the density of neurons is a
probability distribution initially, then it remains so. Whenever it is
convenient we assume that $n_0$ is a probability distribution (which
may be assumed after a suitable scaling).

The second model we consider is a modified version of the first one
where $s$ represents a generic ``state'' of the neuron, not
necessarily the time elapsed since the last discharge. This model was
proposed in \cite{PPS14}, and assumes that neurons in a state $u$
return to a certain state $s < u$ after firing, with a certain
probability distribution $\kappa(s,u)$. The model reads as follows:
\begin{align} \label{eq:fatnonlin}
  \begin{cases}
    \begin{split}
      \frac{\p}{\p t} n(t,s) + \frac{\p}{\p s} n(t,s)
      &+ p(N(t),s)n(t,s)
      \\
      = \int_{0}^{+\infty} &\kappa(s,u)p(N(t),u)n(t,u)du,
      \quad u,s,t > 0,
      \\
      n(t, s=0) &= 0,
      \quad N(t):= \int_{0}^{+\infty} p(N(t),s)n(t,s)ds, \quad t>0,
      \\ 
      n(t=0,s) &= n_0(s), \quad s \geq 0.
    \end{split}
  \end{cases}
\end{align}
This equation differs from the first one in the addition of a kernel
$\kappa = \kappa(s,u)$. For fixed $u$, the quantity $\kappa(\cdot, u)$
is a probability measure which gives the distribution of neurons which
take the state $s$ when they discharge at a state $u$. Hence, neurons
do not necessarily start the cycle from $s=0$ after firing, and so $s$
cannot be properly understood as an `age' variable in this model. One
should notice that equation \eqref{eq:nofatigue} is a limiting case of
equation \eqref{eq:fatnonlin} when $\kappa(\cdot, u) = \delta_0(s)$,
the Dirac delta at $s=0$. This connection is seen more obviously when
a definition for weak or measure solutions is given; see Section
\ref{sec:fatigue}. We remark that the terms involving $p$ and
$\kappa$ are mathematically close to the ones appearing in
fragmentation processes.

These equations and similar models have been shown to exhibit many
interesting phenomena which are consistent with the experimental
behaviour of neurons: depending on the parameter $p$ and the initial
data one can find periodic solutions, apparently chaotic solutions,
and solutions which approach an equilibrium state. The first two kinds
of behaviour (periodic and chaotic solutions) are harder to study
mathematically; numerical simulations have been performed in
\cite{PPS10,PPS13} and some explicit solutions have been
found. Regarding convergence to equilibrium, some regimes are studied
in these works using perturbative techniques, such as the so-called
\emph{low-connectivity} and \emph{high-connectivity} cases. Our
contribution in this work is a simplified study of the
low-connectivity case (corresponding to a weak nonlinearity) which
gives improved results, and which uses a promising technique for this
and similar models.

As a population balance equation of a form that appears often in
mathematical biology, several techniques exist to study equations
\eqref{eq:nofatigue} and \eqref{eq:fatnonlin} rigorously. We refer to
\citet{Perthamebook:58110} for a good exposition of many of the
relevant tools. One of the main methods used so far in the study of
convergence to equilibrium for equations \eqref{eq:nofatigue} and
\eqref{eq:fatnonlin} is the \emph{entropy method}, which roughly
consists in finding a suitable Lyapunov functional $H = H(n)$ such
that
\begin{equation}
  \label{eq:Htheorem}
  \ddt H(n(t, \cdot)) = - D(n(t, \cdot)) \leq 0
\end{equation}
along solutions $n = n(t,s)$ to \eqref{eq:nofatigue} or
\eqref{eq:fatnonlin}, and then investigating whether one may prove
inequalities of the type $\lambda H(n) \leq D(n)$ for some
$\lambda > 0$ and a family of functions $n$ sufficiently large to
contain $n(t,\cdot)$ for all times $t$. If the answer is positive, one
can apply the Gronwall inequality to \eqref{eq:Htheorem} and deduce
that $H(n(t,\cdot))$ decays exponentially with a rate proportional to
$e^{-\lambda t}$. This in turn may give useful information on the
approach to equilibrium, often implying that $n(t,\cdot)$ approaches
equilibrium in the $L^1$ norm. A fundamental difficulty is that
phenomenological equations motivated by biological considerations do
not have any obvious Lyapunov functionals. This difficulty leads us to
considering cases which are close to a linear regime, taking advantage
of the fact that mass- and positivity-conserving linear equations
(essentially Markov evolutions) have a well-known family of Lyapunov
functionals. This idea was followed in \citet{PPS10,PPS13,PPS14},
using a specific Lyapunov functional obtained by integrating the
primitive of $n - n_*$ (where $n_*$ is an equilibrium state) against a
suitable weight.  

Apart from the entropy method, for the time elapsed neuron network model \eqref{eq:nofatigue}, another approach has been developed in \citet{Mischler2018, 2015arXiv151207112W}. This approach is based on spectral analysis theory for semigroups in Banach spaces. In \cite{Mischler2018}, uniqueness of the steady state and its nonlinear exponential stability in the weak connectivity regime for the first model was proved. This approach is extended in \cite{2015arXiv151207112W} to the cases without delay and with delay both in the weak and strong connectivity regimes considering a particular step function as a firing rate. Furthermore, in \cite{doi:10.1142/S021820251550058X} the link between several point processes models (Poisson, Wold, Hawkes) that have been proved to statistically fit real spike trains data and age-structured partial differential equations as introduced by \cite{PPS10} was investigated. This approach is extended to generalized Hawkes processes as microscopic models of individual neurons in \cite{CHEVALLIER20173870}.

We propose an alternative approach that is based on neither the entropy
method nor the forementioned approaches, but instead takes advantage of a set of results in the theory
of Markov processes known as \emph{Doeblin's theory}, with some
extensions such as \emph{Harris' theorem}; see \citet{Harris}, or
\citet{Hairer, Gabriel} for simplified recent proofs and
\citet[Chapter 2]{Stroock2005} for a basic exposition. The idea is
still based on first studying the linear case and then carrying out a
perturbation argument; the difference is that we study the spectral
properties of the linear operator by Doeblin's theory, which is quite
flexible and later simplifies the proofs. We obtain a spectral gap
property of the linear equation in a set of measures, and this leads
to a perturbation argument which naturally takes care of the boundary
conditions in \eqref{eq:nofatigue}-\eqref{eq:fatnonlin}. Similar ideas
are reviewed in \cite{Gabriel} for the renewal equation, and have been
recently used in \cite{BCG2017} for neuron population models
structured by voltage.

Due to this strategy, studying solutions to \eqref{eq:nofatigue} and
\eqref{eq:fatnonlin} in the sense of measures comes as a natural
setting for two important reasons: first, it fits well with the linear
theory; and second, it allows us to treat the weakly nonlinear case as
a perturbation of the linear one. Note that one difference between the
weakly nonlinear case and the linear case for equation
\eqref{eq:nofatigue} is in the boundary condition, and this is
conveniently encoded as a difference in a measure source term; see the
proof of Theorem \ref{thm:main} for details on this. Measure solutions
are also natural since a Delta function represents an initial
population whose age (or structuring variable) is known
precisely. There exist also recent works on numerical schemes for
structured population models in the space of nonnegative measures
\citep{num1, num2, num3}. Entropy methods have also been extended to
measure initial data by \cite{2016arXiv160407657G} for the renewal
equation.

\medskip

Let us detail our notation and assumptions before stating the main
results. Regarding the regularity of the firing rate $p = p(N,s)$, we
always assume that $p$ is bounded, Lipschitz and nonnegative:
\begin{equation}
  \label{p1}
  p \in W^{1,\infty}([0,+\infty) \times [0,+\infty)),
  \qquad p(N,s) \geq 0 \quad \text{ for all $N,s \in [0,+\infty)$.}
\end{equation}
and we call $L$ the Lipschitz constant of $p$ with respect to $N$;
that is, $L$ is the smallest number such that
\begin{equation}
  \label{p2}
  |p(N_1, s) - p(N_2, s)| \leq L |N_1 - N_2|
  \quad \text{for all $N_1, N_2, s \geq 0$}.
\end{equation}
We assume that for a fixed global activity $N$ the firing rate
increases as time passes; more precisely,
\begin{equation}
  \label{p3}\frac{\partial }{\partial s} p(N,s) > 0, \quad \text{for all $N, s \geq 0$}.
\end{equation} where the derivative is defined. We also assume
certain bounds on $p$: there exist $s_*, p_{\min}, p_{\max} > 0$ such
that
\begin{equation}
  \label{p4}
  p_{\min} \1_{[s_*,\infty)} \leq  p(N,s) \leq p_{\max} 
  \quad \text{for all $N, s \geq 0$,}
\end{equation}
where $\1_A$ denotes the characteristic function of a set
$A$. Regarding $\kappa$, the basic modelling assumption is that
\begin{equation}
  \label{k5}
  \text{for each $u \geq 0$, $\kappa(\cdot, u)$ is a probability
    measure supported on $[0,u]$.}
\end{equation}
We also impose a positivity condition on $\kappa$ which essentially
states that after firing there is always a sizeable probability of
jumping to a state with $s$ close to $0$: there exist
$\epsilon, 0 < \delta < s_*$ such that
\begin{equation}
  \label{k6}
  \kappa(\cdot, u) \geq \epsilon \mathds{1}_{[0,\delta]}
  \quad \text{for all $u \geq s_*$}.
\end{equation}

\bigskip Our main result can be formulated as follows for both
equation \eqref{eq:nofatigue} and \eqref{eq:fatnonlin}:

\begin{thm}
  \label{thm:main}
  We assume that \eqref{p1}--\eqref{p4} are satisfied for equation
  \eqref{eq:nofatigue}, or \eqref{p1}--\eqref{k6} for equation
  \eqref{eq:fatnonlin}. We assume also that $L$ is small enough
  depending on $p$ and $\kappa$ (with an explicit estimate; see
  remarks after the statement).

  Let $n_0$ be a probability measure on $[0,+\infty)$. There is a
  unique probability measure $n_*$ which is a stationary solution to
  \eqref{eq:nofatigue} or \eqref{eq:fatnonlin}, and there exist
  constants $C \geq 1$, $\lambda > 0$ depending only on $p$ and
  $\kappa$ such that the (mild or weak) measure solution $n = n(t)$ to
  \eqref{eq:nofatigue}-\eqref{eq:fatnonlin} satisfies
  \begin{equation}  \label{eq:mainfor2}
    \|n(t) - n_*\|_{\mathrm{TV}}
    \leq C e^{-\lambda t}
    \| n_0-n_*\|_{\mathrm{TV}}, \text{ for all } t \geq 0.
  \end{equation}
\end{thm}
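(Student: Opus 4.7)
The plan is to establish a spectral gap for the linearised evolution via Doeblin's theorem and then to close the nonlinear argument by a perturbation estimate that exploits the smallness of $L$.

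\emph{Linear step.} First I would freeze the global activity at an arbitrary constant $\bar N \geq 0$ and study the linear evolution obtained by replacing $p(N(t),s)$ by $p(\bar N, s)$ in \eqref{eq:nofatigue} or \eqref{eq:fatnonlin}. This defines a mass-preserving Markov semigroup $(S_t^{\bar N})_{t \geq 0}$ on the space of finite nonnegative measures on $[0,\infty)$. The heart of this step is to verify Doeblin's minorization condition uniformly in $\bar N$: there exist $T>0$, $\alpha \in (0,1)$ and a probability measure $\nu$ (independent of $\bar N$) such that $S_T^{\bar N} \mu \geq \alpha \nu$ for every probability $\mu$. Assumption \eqref{p4} guarantees that any mass initially placed at some $s_0 \geq 0$ is carried by transport into the firing region $[s_*,\infty)$ and there fires at exponential rate at least $p_{\min}$, so that within a bounded window a definite fraction has fired at least once; upon firing that mass is relocated either onto $\{s=0\}$ (for \eqref{eq:nofatigue}) or, thanks to \eqref{k6}, as a measure bounded below by $\epsilon\, \1_{[0,\delta]}$ (for \eqref{eq:fatnonlin}). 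Transporting that along characteristics for a further time spreads it into a density bounded below by a positive constant on a fixed interval, which is the required minorization. Doeblin's theorem then yields a unique invariant probability $n_{\bar N}^\infty$ and the TV contraction
\[
\|S_t^{\bar N} \mu_1 - S_t^{\bar N} \mu_2\|_{\mathrm{TV}} \leq C e^{-\lambda t} \|\mu_1 - \mu_2\|_{\mathrm{TV}},
\]
with constants $C, \lambda > 0$ independent of $\bar N$.

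\emph{From linear to nonlinear.} The nonlinear steady state $n_*$ is then obtained as a fixed point of the scalar map $\Phi(\bar N) := \int p(\bar N,s)\, n_{\bar N}^\infty(\mathrm{d} s)$. A resolvent comparison using the linear spectral gap produces a Lipschitz estimate $\|n_{\bar N_1}^\infty - n_{\bar N_2}^\infty\|_{\mathrm{TV}} \leq K L |\bar N_1 - \bar N_2|$ for a constant $K$ depending only on $p$ and $\kappa$, and combined with the direct bound $|\Phi(\bar N_1)-\Phi(\bar N_2)| \leq L|\bar N_1 - \bar N_2| + \|p\|_\infty \|n_{\bar N_1}^\infty - n_{\bar N_2}^\infty\|_{\mathrm{TV}}$ this makes $\Phi$ a strict contraction when $L$ is small enough. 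Setting $N_* := \Phi(N_*)$ and $n_* := n_{N_*}^\infty$, for a solution $n(t)$ I then set $m(t) := n(t) - n_*$ and write $\partial_t m = \mathcal{L}_{N_*} m + R(t)$, where $\mathcal{L}_{N_*}$ is the generator of $S_t^{N_*}$ and $R(t)$ is a signed measure encoding the discrepancy $(p(N(t),s)-p(N_*,s))\, n(t,\mathrm{d} s)$ together with the boundary adjustment at $s=0$ for \eqref{eq:nofatigue}, the latter naturally represented as a Dirac source (this is precisely where the measure-valued framework becomes essential). Using \eqref{p2} one gets $\|R(t)\|_{\mathrm{TV}} \leq C_1 L |N(t)-N_*|$, while adding and subtracting $\int p(N(t),s)\, n_*(\mathrm{d} s)$ in the definitions of $N(t)$ and $N_*$ yields $(1-L)|N(t)-N_*| \leq \|p\|_\infty \|m(t)\|_{\mathrm{TV}}$. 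Plugging these into Duhamel's formula for $m$ relative to $S_t^{N_*}$, invoking the linear spectral gap and applying Gronwall, one obtains
\[
\|m(t)\|_{\mathrm{TV}} \leq C e^{-(\lambda - C_2 L) t}\|m(0)\|_{\mathrm{TV}},
\]
which gives \eqref{eq:mainfor2} as soon as $L < \lambda / C_2$.

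The main obstacle I anticipate is establishing the uniform-in-$\bar N$ Doeblin minorization rigorously at the level of measure solutions: one has to first construct $S_t^{\bar N}$ as a well-defined semigroup on measures that correctly implements the boundary condition in \eqref{eq:nofatigue} (or the non-local gain term in \eqref{eq:fatnonlin}), and then track a Dirac initial datum through at least one firing event and one post-firing transport step in order to exhibit a common lower bound $\alpha \nu$ with constants that do not degenerate as $\bar N$ varies. Once the linear minorization is secured, the subsequent fixed-point and perturbation estimates are conceptually standard.
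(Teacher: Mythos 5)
Your proposal is correct and follows essentially the same route as the paper: a uniform Doeblin minorization for the frozen-activity linear semigroup (obtained by transporting mass into $[s_*,\infty)$, bounding the firing output from below, and transporting the relocated mass to get a lower bound on a fixed interval), a fixed-point construction of $(n_*,N_*)$ via the map $\bar N\mapsto\int p(\bar N,s)\,n_{\bar N}^\infty(\d s)$ made contractive by the spectral gap when $L$ is small, and a Duhamel--Gronwall perturbation of $S_t^{N_*}$ with the source term bounded by $CL\|n(t)-n_*\|_{\mathrm{TV}}$. The only cosmetic difference is that for equation \eqref{eq:nofatigue} the paper builds $n_*$ from the explicit ODE solution rather than from the linear equilibria, but your uniform argument covers both cases.
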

\begin{rem}
  The constants are all constructive. To be precise, one can take
  \begin{gather*}
    \lambda = \lambda_1 - \tilde{C}, \qquad C = C_1 \qquad \text{for
      \eqref{eq:nofatigue}},
    \\
    \lambda = \lambda_2 - \tilde{C}, \qquad C = C_2 \qquad \text{for
      \eqref{eq:fatnonlin}},
  \end{gather*}
  where
  \begin{align*}
    &C_1 := \frac{1}{1-s_*\beta},
      \qquad
    &&\lambda_1
       = - \frac{\log (1-s_*\beta)}{2s_*}
    \\
    &C_2 := \frac{1}{1 - \epsilon \delta (s_*-\delta) \beta},
      \qquad
    &&\lambda_2
       = - \frac{\log ( 1 - \epsilon \delta (s_*-\delta) \beta) }{2s_*}
  \end{align*}	
  and with
  \begin{equation*}
    \beta = p_{\min}  e^{-2p_{\max}s_*} \text{ and } \tilde{C} = 2p_{\max} \frac{L}{1-L}.
  \end{equation*}
  The smallness condition on $L$ can be written as
  \begin{equation*}
    L < \min \bigg \{  \frac{p_{\min}^2}{ p_{\max} ^2\left(  s_*p_{\min} (s_*p_{\min}+2) +2\right)} , \frac{\log (1-s_*\beta)}{\log (1-s_*\beta) - 4 p_{\max} s_*} \bigg \}
       \text{ for
        \eqref{eq:nofatigue}}
  \end{equation*}
  or
  \begin{equation*}
    L <  \min \bigg\{      
        \frac{ p_{\min}\epsilon \delta (s_*-\delta) \beta}{ p_{\min}\epsilon \delta (s_*-\delta) \beta+ p_{\max} e^{4p_{\max}s_*}}
      , \frac{\log (1-\epsilon \delta (s_*-\delta))}
      {\log (1- \epsilon \delta (s_*-\delta)) - 4 p_{\max} s_*}
    \bigg\}
    \text{ for \eqref{eq:fatnonlin}}. 
\end{equation*}

\end{rem}

\medskip As remarked above, the closest results in the literature are
those of \citet{PPS10, PPS14}. Our equation \eqref{eq:nofatigue} is
essentially the model in \citet{PPS10}, written in a slightly
different formulation that does not include time delay and does not
highlight the connectivity as a separate parameter (the connectivity
of neurons in our case is measured in the size of $\partial_N p$). The
results in \citet{PPS10} use entropy methods and show exponential
convergence to equilibrium (a similar statement to Theorem
\ref{thm:main}) in a weighted $L^1$ space, for the case with delay and
for a particular form of the firing rate $p$. As compared to this, our
results work in a space of measures and can be easily written for
general firing rates $p$; however, we have not considered the
large-connectivity case (which would correspond to large
$\partial_N p$ in our case) or the effects of time delay.

Similar remarks apply to the results for equation \eqref{eq:fatnonlin}
contained in \citet{PPS14}. In this case our strategy gives in general
conditions which are simpler to state, and provide a general framework
which may be applied to similar models. Again, we have not considered
a time delay in the equation, which is a difference with the above
work. There are numerical simulations and further results on regimes
with a stronger nonlinearity in \citet{PPS10, PPS13, PPS14}.

\medskip

This paper is organized as follows. In Section \ref{sec:sg} we state
Doeblin's Theorem from the theory of Markov processes, which
plays a crucial role in our convergence results. Section
\ref{sec:nofatigue} and section \ref{sec:fatigue} are dedicated to the
age-structured neuron population model \eqref{eq:nofatigue} and the
structured neuron population model with a fatigue \eqref{eq:fatnonlin}
respectively. The organisation of those last two sections is the same: in each,
we first give proofs for well-posedness, and existence and uniqueness
of stationary solutions, always in the weak nonlinearity regime. Later
we consider the linear equations (when $p$ does not depend on $N$) and
we prove that solutions have positive lower bounds which ensures that
the associated stochastic semigroups satisfy the Doeblin condition,
obtaining exponential convergence results for the linear
problems. Finally, we prove exponential relaxation to the steady state
for the nonlinear models \eqref{eq:nofatigue}--\eqref{eq:fatnonlin} by
a perturbation argument based on the linear theory.

\section{Spectral gaps in total variation}
\label{sec:sg}

We briefly present here the version of Doeblin's theorem that we use
in this paper. A recent proof which brought it to our attention can be
found in \cite{Gabriel}; a general version known as Harris' theorem
can be found in \citet{Harris, Hairer}. This result applies to
stochastic semigroups defined in a space of measures (or in an $L^1$
space); that is, mass- and positivity-preserving
semigroups\footnote{In the literature a Markov semigroup is often the
  dual of this kind of semigroup, but is sometimes also the same as
  our definition of a stochastic semigroup. We give the definition to
  avoid any confusion.}.

Given a measurable space $(E, \mathcal{A})$ we denote by
$\mathcal{M}(E)$ the set of finite measures on $E$, and by
$\mathcal{P}(E)$ the set of probability measures on $E$. We equip
$\mathcal{M}(E)$ with the usual total variation norm which we denote
by $\|\cdot\|_{\mathrm{TV}}$; we recall that this norm is defined by
$\|\mu\| := \int_E \mu_+ + \int_E \mu_-$, where $\mu = \mu_+ - \mu_-$
is the Hahn-Jordan decomposition of the measure $\mu$ into its
positive and negative parts.

\begin{dfn}
  Let $(E, \mathcal{A})$ be a measurable space. A \emph{stochastic
    operator} on $\mathcal{M}(E)$ is a linear operator
  $S \: \mathcal{M}(E) \to \mathcal{M}(E)$ such that $S \mu \geq 0$
  for all $\mu \geq 0$, and such that $\int_E S \mu = \int_E \mu$ for
  all $\mu \in \mathcal{M}(E)$. Equivalently, it is a linear operator
  which preserves the space $\mathcal{P}(E)$ of probability measures
  on $E$.

  A \emph{stochastic semigroup} on $\mathcal{M}(E)$ is a semigroup
  $(S_t)_{t \geq 0}$ of operators
  $S_t \: \mathcal{M}(E) \to \mathcal{M}(E)$ such that $S_t$ is a
  stochastic operator for each $t \geq 0$. A \emph{stationary state}
  of a semigroup $(S_t)_{t \geq 0}$ on $\mathcal{M}(E)$ is a measure
  $\mu \in \mathcal{M}(E)$ such that $S_t \mu = \mu$ for all
  $t \geq 0$.
\end{dfn}

We observe that $\|S_t \mu\|_{\mathrm{TV}} \leq \|\mu\|_{\mathrm{TV}}$
holds true for all $\mu \in \mathcal{M}(E)$. Since $S_t$ preserves
order, we have $-S_t|\mu| \leq S_t \mu \leq S_t |\mu|$ which implies
$|S_t \mu| \leq S_t |\mu|$. Then we integrate both sides to obtain
$\int |S_t \mu| \leq \int S_t \mu =\int |\mu|$ for all
$\mu \in \mathcal{M}(E)$ by using the mass conservation property of
$S_t$.

\begin{dfn}
  We say a stochastic operator $S$ satisfies the \emph{Doeblin
    condition} when there exist $0 < \alpha < 1$ and a probability
  measure $\nu$ on $(E, \mathcal{A})$ such that
  \begin{equation}
    \label{eq:Doeblin}
    S \mu \geq \alpha \nu
    \qquad \text{for all $\mu \in \mathcal{P}(E)$.}
  \end{equation}
\end{dfn}

The following version of Doeblin's theorem is essentially the same as
the one in \citet{Gabriel} or \citet[Chapter 2]{Stroock2005}, and a
particular case of \citet{Hairer} if one ignores some technical
conditions regarding the existence of a kernel for the semigroup. We
give a short proof here for clarity, essentially along the lines of
\citet{Gabriel}.

\begin{thm}[Semigroup version of Doeblin's theorem]
  \label{thm:Doeblin-semigroup}
  Let $(E, \mathcal{A})$ be a measurable space and $(S_t)_{t \geq 0}$
  a stochastic semigroup on $\mathcal{M}(E)$. If there exists
  $t_0 > 0$ such that $S_{t_0}$ satisfies the Doeblin condition
  \eqref{eq:Doeblin} then the semigroup has a unique equilibrium $n_*$
  in $\mathcal{P}(E)$, and
  \begin{equation}
    \label{eq:Doeblin-semigroup-decay}
    \| S_t (n - n_*) \|_{\mathrm{TV}} \leq
    \frac{1}{1-\alpha} e^{- \lambda t} \| n-n_* \|_{\mathrm{TV}},
    \text{ for all } t \geq 0,
  \end{equation}
  for all $n \in \mathcal{P}(E)$, where
  \begin{equation*}
    \lambda := -\frac{\log (1-\alpha)}{t_0} > 0.
  \end{equation*}
  In addition,
  \begin{equation}
    \label{eq:St0-contractive-Doeblin}
    \| S_{t_0} (n_1 - n_2) \|_{\mathrm{TV}}
    \leq
    (1-\alpha) \|n_1 - n_2\|_{\mathrm{TV}}
  \end{equation}
  for any probability measures $n_1, n_2$ on $E$.
\end{thm}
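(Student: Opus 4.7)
The plan is to prove the key contraction estimate \eqref{eq:St0-contractive-Doeblin} first, and then bootstrap it using the semigroup property to obtain existence, uniqueness, and exponential decay. The whole argument reduces to a careful use of the Hahn--Jordan decomposition combined with the positivity of the lower bound $\alpha \nu$.

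\textbf{Step 1: strict contraction of $S_{t_0}$ on differences of probability measures.} Let $n_1, n_2 \in \mathcal{P}(E)$ and set $\mu := n_1 - n_2$, which satisfies $\int_E \mu = 0$. Write $\mu = \mu_+ - \mu_-$; then $m := \int_E \mu_+ = \int_E \mu_- = \tfrac{1}{2}\|\mu\|_{\mathrm{TV}}$. If $m = 0$ there is nothing to show. Otherwise $\mu_+/m$ and $\mu_-/m$ lie in $\mathcal{P}(E)$, so the Doeblin condition gives
\begin{equation*}
  S_{t_0}\mu_+ \geq \alpha m\,\nu,
  \qquad
  S_{t_0}\mu_- \geq \alpha m\,\nu.
\end{equation*}
Hence both measures $S_{t_0}\mu_+ - \alpha m\nu$ and $S_{t_0}\mu_- - \alpha m\nu$ are nonnegative, and
\begin{equation*}
  S_{t_0}\mu = (S_{t_0}\mu_+ - \alpha m\nu) - (S_{t_0}\mu_- - \alpha m\nu)
\end{equation*}
expresses $S_{t_0}\mu$ as a difference of two nonnegative measures (not necessarily the Jordan decomposition, but enough for an upper bound). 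Using mass preservation of $S_{t_0}$, each of these two nonnegative measures has total mass $m - \alpha m = (1-\alpha)m$, so
\begin{equation*}
  \|S_{t_0}\mu\|_{\mathrm{TV}} \leq 2(1-\alpha)m = (1-\alpha)\|\mu\|_{\mathrm{TV}},
\end{equation*}
which is exactly \eqref{eq:St0-contractive-Doeblin}. This is the core calculation; I expect it to be the only place where the Doeblin hypothesis is used.

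\textbf{Step 2: existence and uniqueness of the equilibrium.} The restriction $S_{t_0}\colon \mathcal{P}(E) \to \mathcal{P}(E)$ is well-defined by the stochastic property, and by Step 1 it is a strict contraction with respect to the complete metric $d(n_1,n_2) = \|n_1-n_2\|_{\mathrm{TV}}$ on $\mathcal{P}(E)$. Completeness of $(\mathcal{P}(E),d)$ follows from completeness of $(\mathcal{M}(E),\|\cdot\|_{\mathrm{TV}})$ together with the fact that $\mathcal{P}(E)$ is closed for the TV norm. The Banach fixed-point theorem then yields a unique $n_* \in \mathcal{P}(E)$ with $S_{t_0}n_* = n_*$. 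To promote $n_*$ to a stationary state of the whole semigroup, I note that for any $t \geq 0$ the semigroup property gives $S_{t_0}(S_t n_*) = S_t(S_{t_0}n_*) = S_t n_*$, so $S_t n_* \in \mathcal{P}(E)$ is also a fixed point of $S_{t_0}$; by uniqueness, $S_t n_* = n_*$.

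\textbf{Step 3: exponential decay for general $t$.} Given $n \in \mathcal{P}(E)$ and $t \geq 0$, write $t = k t_0 + r$ with $k \in \N$ and $0 \leq r < t_0$. Applying Step 1 iteratively and using $\|S_r\mu\|_{\mathrm{TV}} \leq \|\mu\|_{\mathrm{TV}}$ gives
\begin{equation*}
  \|S_t(n-n_*)\|_{\mathrm{TV}}
  = \|S_r S_{t_0}^k (n-n_*)\|_{\mathrm{TV}}
  \leq (1-\alpha)^k \|n-n_*\|_{\mathrm{TV}}.
\end{equation*}
With $\lambda = -\log(1-\alpha)/t_0$, one has $(1-\alpha)^k = e^{-\lambda k t_0} = e^{-\lambda(t-r)} \leq (1-\alpha)^{-1} e^{-\lambda t}$, since $r < t_0$. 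This yields \eqref{eq:Doeblin-semigroup-decay} with constant $1/(1-\alpha)$.

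There is no real obstacle here beyond the bookkeeping in Step 1; the argument is essentially Hairer's short proof adapted to our measure-theoretic setting, and the only care needed is to ensure that decompositions and inequalities of measures are used correctly in total variation norm.
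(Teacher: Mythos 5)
Your proposal is correct and follows essentially the same approach as the paper's proof: the key contraction is obtained by subtracting $\alpha\nu$ from $S_{t_0}$ applied to each part of the Hahn--Jordan decomposition and using mass conservation, followed by Banach's fixed point theorem and the iteration $t = k t_0 + r$. The only cosmetic difference is that you apply the Hahn--Jordan decomposition first and then the Doeblin bound, whereas the paper first proves the estimate for mutually singular probability measures and then extends by homogeneity; the computation is identical.
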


\begin{proof}
  By the triangle inequality we have
  \begin{equation*}
    \| S_{t_0} \mu_1 - S_{t_0} \mu_2 \|_{\mathrm{TV}}
    \leq
    \| S_{t_0} \mu_1 - \alpha \nu \|_{\mathrm{TV}}
    + \| S_{t_0} \mu_2  - \alpha \nu \|_{\mathrm{TV}}.
  \end{equation*}
  Now, since $S_{t_0} \mu_1 \geq \alpha \nu$, we can write
  \begin{equation*}
    \| S_{t_0} \mu_1 - \alpha \nu \|_{\mathrm{TV}}
    =
    \int (S_{t_0} \mu_1 - \alpha \nu)
    =
    \int \mu_1 - \alpha
    = 1-\alpha,
  \end{equation*}
  due to mass conservation, and similarly for the term
  $\| S_{t_0} \mu_2 - \alpha \nu \|_{\mathrm{TV}}$. This gives
  \begin{equation}
    \label{eq:contrac-Doeblin}
    \| S_{t_0} \mu_1 - S_{t_0} \mu_2 \|_{\mathrm{TV}}
    \leq
    2(1-\alpha)
    = (1-\alpha) \|\mu_1 - \mu_2\|_{\mathrm{TV}}.
  \end{equation}
  if $\mu_1, \mu_2 \in \mathcal{P}(E)$ have disjoint support.  By
  homogeneity, this inequality is obviously also true for any
  nonnegative $\mu_1, \mu_2 \in \mathcal{M}(E)$ having disjoint
  support with $\int \mu_1 = \int \mu_2$. We obtain the inequality in
  general for any $\mu_1, \mu_2 \in \mathcal{M}(E)$ with the same
  integral by writing
  $\mu_1 - \mu_2 = (\mu_1 - \mu_2)_+ - (\mu_2 - \mu_1)_+$, which is a
  difference of nonnegative measures with the same integral. This
  shows \eqref{eq:St0-contractive-Doeblin}.

  The contractivity \eqref{eq:contrac-Doeblin} shows that the operator
  $S_{t_0}$ has a unique fixed point in $\mathcal{P}(E)$, which we
  call $n_*$. In fact, $n_*$ is a stationary state of the whole
  semigroup since for all $s \geq 0$ we have
  \begin{equation*}
     S_{t_0} S_{s} n_* = S_{s} S_{t_0} n_* = S_{s} n_*,
   \end{equation*}
   which shows that $S_{s} n_*$ (which is again a probability measure)
   is also a stationary state of $S_{t_0}$; due to uniqueness,
   \begin{equation*}
     S_s n_* = n_*.
   \end{equation*}
   Hence the only stationary state of $(S_t)_{t \geq 0}$ must be
   $n_*$, since any stationary state of $(S_t)_{t \geq 0}$ is in
   particular a stationary state of $S_{t_0}$.

   In order to show \eqref{eq:Doeblin-semigroup-decay}, for any
   $n \in \mathcal{P}(E)$ and any $t \geq 0$ we write
   \begin{equation*}
     k := \lfloor{t/t_0}\rfloor,
   \end{equation*}
   (where $\lfloor \cdot \rfloor$ denotes the integer part) so that
   \begin{equation*}
     \frac{t}{t_0} - 1 < k \leq \frac{t}{t_0}.
   \end{equation*}
   Then,
   \begin{multline*}
     \| S_t (n - n_*) \|_{\mathrm{TV}}
     = \| S_{t - k t_0} S_{k
       t_0} (n-n_*) \|_{\mathrm{TV}}
     \leq
     \| S_{k t_0} (n-n_*) \|_{\mathrm{TV}}
     \\
     \leq
     (1-\alpha)^{k} \| n-n_* \|_{\mathrm{TV}}
     \leq
     \frac{1}{1-\alpha}
     \exp\left( 
       \frac{t \log (1-\alpha)}{t_0} \right) \| n-n_* \|_{\mathrm{TV}}.
     \qedhere
   \end{multline*}
\end{proof}

\section{An age-structured neuron population model}

\label{sec:nofatigue}

In this section we consider equation \eqref{eq:nofatigue} for an age-structured neuron population. We first
develop a well-posedness theory in the sense of measures, and then we
use Doeblin's Theorem \ref{thm:Doeblin-semigroup} for the linear
problem \eqref{eq:nofatigue-linear} to show exponential convergence to the equilibrium. After
giving conditions for existence and uniqueness of a stationary
solution to equation \eqref{eq:nofatigue}, we use a perturbation
argument in order to obtain a result on its asymptotic behaviour.

\subsection{Well posedness} \label{sec:well-posedness}

In order to develop our well-posedness theory in measures we need to
introduce our notation and the norms we will be considering. We denote
$\R^+_0 := [0,+\infty)$, and $\mathcal{M}(\R^+_0)$ is the set of
finite, signed Borel measures on $\R^+_0$. $\mathcal{M}_+(\R^+_0)$
denoted the subset of $\mathcal{M}(\R^+_0)$ formed by the nonnegative
measures. Since we will always work in $\R^+_0$, for simplicity we
will often write $\mathcal{M}$ and $\mathcal{M}_+$ to denote these
sets, respectively.

We often identify a measure $\mu \in \mathcal{M}(\R^+_0)$ with its
density with respect to Lebesgue measure, denoting the latter by the
function $\mu = \mu(s)$. We abuse notation by writing $\mu(s)$ even
for measures that may not have a density with respect to Lebesgue
measure. Similarly, for a function
$n \: [0, T) \to \mathcal{M}(\mathbb{R}^+_0)$ we may often write
$n(t, s)$ even if $n(t)$ does not have a density with respect to
Lebesgue measure. In these cases any identities involved should be
understood as identities between measures.

We denote by $\mathcal{C}_0(\R^+_0) \equiv \mathcal{C}_0$ the set of
continuous functions $\phi$ on $\R^+_0$ with
$\lim_{s \to +\infty} \phi(s) = 0$ endowed with the supremum norm
\begin{equation*}
\| \phi \|_{\infty} := \sup_{s \geq 0} |\phi(s)|,
\end{equation*}
$\mathcal{C}_0(\R^+_0)$ becomes a Banach space. Similarly,
$\mathcal{C}_{\mathrm{c}}(\R^+_0) \equiv \mathcal{C}_{\mathrm{c}}$
denotes the set of compactly supported continuous functions on
$[0,+\infty)$.

In $\mathcal{M}$ one can define the usual total variation norm, which
we will denote by $\|\cdot\|_{\mathrm{TV}}$. We recall that
$(\mathcal{M}, \|\cdot\|_{\mathrm{TV}})$ is a Banach space, and is the
topological dual of $\mathcal{C}_0([0,+\infty))$ with the supremum
norm, as stated by the Riesz representation theorem. The weak-$*$
topology on $\mathcal{M}$ is the weakest topology that makes all
functionals $T \: \mathcal{M} \to \R$, $\mu \mapsto \int_{\R^+_0}
\phi \mu$ continuous, for all $\phi \in \mathcal{C}_0$. In the
associated topology, a sequence $(\mu_k)_{k \geq 1}$ in $\mathcal{M}$ converges in the
weak-$*$ sense to $\mu \in \mathcal{M}$ when
\begin{equation*}
\lim_{k \to +\infty} \int_{\R^+_0} \phi \mu_k
= \int_{\R^+_0} \phi \mu
\qquad \text{for all $\phi \in \mathcal{C}_0$}.
\end{equation*}

We will also use the bounded Lipschitz norm $\|\cdot\|_{\mathrm{BL}}$
on $\mathcal{M}$, sometimes known as the flat metric or the
$W^{1,\infty}$ dual metric, defined by
\begin{equation*}
\| \mu \|_{\mathrm{BL}} := \sup_{\psi \in \mathcal{L}} \int_{\R^+_0}
\psi \mu,
\qquad \mu \in \mathcal{M}
\end{equation*}
where
\begin{equation*}
\mathcal{L} := \{ \psi \in \mathcal{C}(\R^+_0) \mid
\text{$\psi$ bounded and Lipschitz with $\|\psi\|_\infty +
	\|\psi'\|_\infty \leq 1$} \}.
\end{equation*}
One sees from this definition that the bounded Lipschitz norm is dual
to the norm
\begin{equation*}
\| \psi \|_{1,\infty} := \|\psi\|_\infty + \|\psi'\|_\infty,
\qquad \psi \in W^{1,\infty}(\R^+_0)
\end{equation*}
defined on
$W^{1,\infty}(\R^+_0) = \{ \psi \in \mathcal{C}(\R^+_0) \mid
\text{$\psi$ bounded and Lipschitz} \}$ (but
$(\mathcal{M}, \|\cdot\|_{\mathrm{BL}})$ is \emph{not} the topological
dual of $W^{1,\infty}$). An important property of this norm is that it
metrises the weak-$*$ topology on any tight set with bounded total
variation. We recall that a set $B \subseteq \mathcal{M}$ is
\emph{tight} if for every $\epsilon > 0$ there exists $R > 0$ such
that $|\mu|((R,+\infty)) < \epsilon$ for all $\mu \in B$.

\begin{lem}[\citet{Lorenz}, 2.5.1, Proposition 43]
  \label{lem:BL-weak}
  If $B \subseteq \mathcal{M}$ is tight and is bounded in total
  variation norm, then the topology associated to
  $\|\cdot\|_{\mathrm{BL}}$ on $B$ is equal to the weak-$*$ topology
  on $B$.
\end{lem}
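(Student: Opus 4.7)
The plan is to prove equality of the two topologies on $B$ by showing they are both first countable and have the same convergent sequences. The BL topology is metric by construction, and the weak-$*$ topology restricted to a TV-bounded subset of $\mathcal{M}$ is metrizable because $\mathcal{C}_0(\R^+_0)$ is separable; hence it suffices to consider sequences $(\mu_k) \subseteq B$ converging to some $\mu \in B$. Set $M := \sup_{\nu \in B} \|\nu\|_{\mathrm{TV}} < \infty$.

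The implication BL $\Rightarrow$ weak-$*$ needs only the TV bound and an approximation argument. Given $\phi \in \mathcal{C}_0$ and $\eta > 0$, pick a bounded Lipschitz $\psi$ with $\|\phi - \psi\|_\infty < \eta$ (possible as $\mathcal{C}_{\mathrm{c}}^\infty$ is dense in $\mathcal{C}_0$ in sup norm); then
\[
\Bigl| \int \phi \, (\mu_k - \mu) \Bigr|
\leq 2 M \eta + \|\psi\|_{1,\infty} \|\mu_k - \mu\|_{\mathrm{BL}}.
\]
Letting $k \to \infty$ and then $\eta \to 0$ establishes this direction.

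The converse weak-$*$ $\Rightarrow$ BL is the interesting direction, and tightness enters crucially here. For $\eta > 0$, tightness supplies $R > 0$ with $|\nu|((R, +\infty)) < \eta$ uniformly over $\nu \in B$. Fix a cutoff $\chi \in \mathcal{C}_{\mathrm{c}}([0,+\infty))$ equal to $1$ on $[0, R]$, supported in $[0, R+1]$, with $0 \leq \chi \leq 1$ and bounded Lipschitz constant. For any $\psi \in \mathcal{L}$,
\[
\int \psi \, (\mu_k - \mu)
= \int \psi \chi \, (\mu_k - \mu)
+ \int \psi (1-\chi) \, (\mu_k - \mu),
\]
and the last term is bounded by $2 \|\psi\|_\infty \eta \leq 2 \eta$. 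The family $\mathcal{F} := \{\psi \chi : \psi \in \mathcal{L}\}$ is uniformly bounded, equi-Lipschitz, and supported in the compact interval $[0, R+1]$, so Arzel\`a--Ascoli renders it totally bounded in $\mathcal{C}([0, R+1])$. Picking a finite $\eta$-net $\psi_1 \chi, \ldots, \psi_m \chi$ in sup norm yields
\[
\sup_{\psi \in \mathcal{L}} \Bigl| \int \psi \chi \, (\mu_k - \mu) \Bigr|
\leq 2 M \eta + \max_{1 \leq j \leq m} \Bigl| \int \psi_j \chi \, (\mu_k - \mu) \Bigr|.
\]
Since each $\psi_j \chi \in \mathcal{C}_{\mathrm{c}} \subseteq \mathcal{C}_0$, weak-$*$ convergence kills the finite maximum as $k \to \infty$, and sending $\eta \to 0$ gives $\|\mu_k - \mu\|_{\mathrm{BL}} \to 0$.

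The main obstacle is that $\mathcal{L}$ is not relatively compact in $\mathcal{C}_0$: bounded Lipschitz functions need not vanish at infinity, so weak-$*$ convergence against each individual $\phi \in \mathcal{C}_0$ does not automatically upgrade to uniform control over all of $\mathcal{L}$. Tightness is precisely what allows the mass outside a compact set to be discarded with uniformly small error, after which Arzel\`a--Ascoli reduces the supremum over the infinite test class $\mathcal{L}$ to a finite maximum handled by weak-$*$ convergence.
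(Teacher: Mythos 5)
Your proof is correct. Note that the paper does not actually prove this lemma: it is quoted verbatim from \citet{Lorenz} (2.5.1, Proposition 43) and used as a black box, so there is no in-paper argument to compare against. Your self-contained argument is the standard one and all the steps hold up: the reduction to sequences is legitimate because both topologies are first countable on $B$ (the BL topology is metric, and the weak-$*$ topology is metrizable on TV-bounded sets since $\mathcal{C}_0(\R^+_0)$ is separable, so equality of convergent sequences forces equality of the subspace topologies); the easy direction uses only the TV bound and density of bounded Lipschitz functions in $\mathcal{C}_0$; and the converse correctly isolates where tightness is indispensable, namely in discarding the tail mass so that the cutoff class $\{\psi\chi : \psi \in \mathcal{L}\}$ becomes totally bounded in $\mathcal{C}([0,R+1])$ by Arzel\`a--Ascoli, reducing the supremum over $\mathcal{L}$ to a finite maximum that weak-$*$ convergence annihilates. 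The only cosmetic quibble is the invocation of $\mathcal{C}^\infty_{\mathrm{c}}$-density on the closed half-line $[0,+\infty)$; compactly supported Lipschitz functions (which need not vanish at $s=0$) are what you actually want there, and they are plainly dense in $\mathcal{C}_0(\R^+_0)$ in sup norm, so the argument is unaffected. Your closing remark correctly identifies why the TV bound alone would not suffice.
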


If $I \subseteq \R$ is an interval we denote by
$\mathcal{C}(I, \mathcal{M}_+(\R_0^+))$ the set of functions
$n \: I \to \mathcal{M}_+(\R_0^+)$ which are continuous with respect
to the bounded Lipschitz norm on $\mathcal{M}_+(\R_0^+)$.
We define mild measure solutions to equation \eqref{eq:nofatigue} by
the usual procedure of rewriting it using Duhamel's formula. We denote
by $(T_t)_{t \geq 0}$ the translation semigroup generated on
$(\mathcal{M}, \|\cdot\|_{\mathrm{BL}})$ by the operator
$-\partial_s$. That is: for $t \geq 0$, any measure
$n \in \mathcal{M}(\R^+_0)$ and any $\phi \in \mathcal{C}_0(\R^+_0)$,
\begin{equation}
\label{eq:Tt-def}
\int_{\R^+_0} \phi(s) T_t n(s) \d s
:= \int_{\R^+_0} \phi(s+t) n(s) \d s.
\end{equation}
In other words, using the notation we follow in this paper,
\begin{equation*}
T_t n(s) := n(s-t),
\end{equation*}
with the understanding that $n$ is zero on $(-\infty, 0)$.

\begin{dfn}
	\label{defnmeasuresoln}
	Assume $p$ satisfies \eqref{p1} and is nonnegative. A couple of
	functions $n \in \mathcal{C}([0, T), \mathcal{M}_+(\R^+_0))$ and
	$N \in \mathcal{C}([0,T), [0,+\infty))$, defined on an interval
	$[0, T)$ for some $T \in (0,+\infty)$, is called a \emph{mild
		measure solution} to \eqref{eq:nofatigue} with initial data
	$n_0 \in \mathcal{M}(\R^+_0)$ and $N_0 \in \R$ if it satisfies
	$n(0) = n_0$, $N(0) = N_0$,
	\begin{equation}
	\label{soln1}
	n(t,s) = T_t n_0(s)
	- \int_0^t T_{t-\tau} \big(p (N(\tau), \cdot)
	n(\tau,\cdot) \big) (s) \d \tau
	+ \int_0^t T_{t-\tau} \big( N(\tau) \delta_0 \big) (s) \d \tau
	\end{equation}
	for all $t \in [0,T)$, and
	\begin{equation*}
	N(t) = \int_0^\infty p(N(t), s) n(t,s) \d s,
	\qquad t \in [0, T).
	\end{equation*}
\end{dfn}

\begin{rem}
	We notice that the second term in \eqref{soln1} can be rewritten as
	\begin{multline}
	\label{eq:delta-integral-alternative}
	\int_0^t T_{t-\tau} \big( N(\tau) \delta_0 \big) (s) \d \tau
	=
	\int_0^t N(\tau) \delta_{t-\tau} (s) \d \tau
	\\
	= N(t-s) \1_{[0,t]}(s)
	= N(t-s) \1_{[0,\infty)}(t-s).
	\end{multline}
	This will sometimes be a more convenient form.
\end{rem}

By integrating in $\R^+_0$, Definition \ref{defnmeasuresoln} directly
implies mass conservation:
\begin{lem}[Mass conservation for measure solutions]
	\label{lem:mass-conservation}
	Let $T \in (0,+\infty]$. Any mild measure solution $(n, N)$ to
	\eqref{eq:nofatigue} defined on $[0,T)$ satisfies
	\begin{equation}
	\label{eq:mass-conservation1}
	\int_{\R^+_0} n(t,s) \d s = \int_{\R^+_0} n_0(s) \d s,
	\qquad \text{for all $t \in [0,T)$,}
	\end{equation}
	or in other words (since solutions are nonnegative measures by
	definition),
	\begin{equation*}
	\| n(t) \|_{\mathrm{TV}} = \|n_0\|_{\mathrm{TV}}
	\qquad \text{for all $t \in [0,T)$.}
	\end{equation*}
\end{lem}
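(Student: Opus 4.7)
The plan is to integrate the mild formulation \eqref{soln1} over $\R^+_0$ and observe a miraculous cancellation between the sink term (neurons firing, rate $p$) and the source term (neurons reappearing at $s=0$ via the Dirac delta), which is exactly what the nonlinear boundary condition $N(t) = \int p(N(t),s) n(t,s)\,ds$ encodes.

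More concretely, I would begin with the following key fact about the translation semigroup: for any $\mu \in \mathcal{M}_+(\R^+_0)$ and any $t \geq 0$, one has $\int_{\R^+_0} T_t \mu = \int_{\R^+_0} \mu$. This can be seen directly from the definition \eqref{eq:Tt-def} by testing against a sequence $\phi_R \in \mathcal{C}_0(\R^+_0)$ of bump functions that increase monotonically to $1$, and using monotone convergence together with the fact that $\supp T_t \mu \subseteq [t,\infty) \subseteq \R^+_0$, so no mass is lost at the boundary.

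Next I would integrate \eqref{soln1} over $s \in \R^+_0$ and apply Fubini--Tonelli (permissible since all integrands are nonnegative and, by the $\mathcal{M}_+$-valued continuity of $n$ on compact time intervals together with the boundedness of $p$, the double integrals are finite). This gives
\begin{equation*}
\int_{\R^+_0} n(t,s)\,\d s
= \int_{\R^+_0} n_0 - \int_0^t \!\int_{\R^+_0} p(N(\tau),s)\,n(\tau,s)\,\d s\,\d \tau + \int_0^t \!\int_{\R^+_0} T_{t-\tau}(N(\tau)\delta_0)(s)\,\d s\,\d \tau.
\end{equation*}
For the loss term, the mass-preservation property above turns the inner integral into $\int p(N(\tau),s)n(\tau,s)\,\d s = N(\tau)$ by the definition of $N$. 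For the source term, the same mass-preservation property (or equivalently the identity \eqref{eq:delta-integral-alternative}, which rewrites it as $N(t-s)\1_{[0,t]}(s)$ and is then integrated by a change of variables) yields $\int_0^t N(\tau)\,\d \tau$ as well. The two contributions cancel, leaving $\int n(t,s)\,\d s = \int n_0(s)\,\d s$, which is \eqref{eq:mass-conservation1}. Since $n(t) \in \mathcal{M}_+$ by definition, this also gives the total variation identity.

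I do not foresee a genuine obstacle here; the only points requiring a little care are the justification that $\int T_t \mu = \int \mu$ (handled by monotone approximation by $\mathcal{C}_0$ functions) and the application of Fubini, both of which are routine under the standing hypotheses \eqref{p1} and the continuity of $n$ and $N$ on $[0,T)$.
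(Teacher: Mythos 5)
Your proof is correct and follows exactly the route the paper intends: the paper gives no written proof, merely asserting that integrating the Duhamel formula \eqref{soln1} over $\R^+_0$ yields the result, and your argument supplies precisely that computation (mass-preservation of the right-translation $T_t$ on $\R^+_0$, Tonelli, identification of both the loss and source terms with $\int_0^t N(\tau)\,\d\tau$, and their cancellation). The details you add --- the monotone approximation by $\mathcal{C}_0$ functions and the justification of Fubini via nonnegativity --- are the right ones and fill in what the paper leaves implicit.
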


\begin{lem}
  \label{lem:N-stability}
  Assume that $p$ satisfies \eqref{p1} and the Lipschitz constant $L$
  in \eqref{p2} satisfies $L < 1 / \|n\|_{\mathrm{TV}}$, and let
  $n \in \mathcal{M}(\R^+_0)$. There exists a unique $N \in \R$
  satisfying
  \begin{equation}
    \label{eq:N-def}
    N = \int_0^\infty p(N, s) n(s) \d s.
  \end{equation}
  Under these conditions, if $n_1, n_2 \in \mathcal{M}(\R^+_0)$ are
  two measures and $N_1, N_2 \in \R$ are the corresponding solutions
  to \eqref{eq:N-def}, then
  \begin{equation}
    \label{eq:N-stability}
    |N_1 - N_2|
    \leq
    \frac{\|p\|_{\infty}}{1 - L \|n_1\|_{\mathrm{TV}}}
    \|n_1 - n_2\|_{\mathrm{TV}}.
  \end{equation}
\end{lem}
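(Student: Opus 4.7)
The plan is to apply Banach's fixed-point theorem for the existence/uniqueness statement, and then to derive the stability estimate by a standard ``add and subtract'' splitting of $N_1-N_2$ combined with the Lipschitz and boundedness hypotheses on $p$.

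For the first claim, I would introduce the map $\Phi \colon \R \to \R$ defined by
\begin{equation*}
  \Phi(N) := \int_0^\infty p(N,s)\, n(ds),
\end{equation*}
so that \eqref{eq:N-def} asks precisely for a fixed point of $\Phi$. Using the Lipschitz hypothesis \eqref{p2}, for any $N_1, N_2 \in \R$,
\begin{equation*}
  |\Phi(N_1) - \Phi(N_2)|
  \leq \int_0^\infty |p(N_1,s) - p(N_2,s)|\, d|n|(s)
  \leq L\, \|n\|_{\mathrm{TV}}\, |N_1 - N_2|.
\end{equation*}
The assumption $L\|n\|_{\mathrm{TV}} < 1$ makes $\Phi$ a strict contraction on the complete metric space $\R$, so Banach's fixed-point theorem yields a unique $N$ satisfying \eqref{eq:N-def}. (If desired, one checks that when $n \geq 0$ the map sends $[0,\infty)$ into itself, so we stay within the natural domain of definition of $p$.)

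For the stability bound, I would use the telescoping identity
\begin{equation*}
  N_1 - N_2
  = \int_0^\infty \bigl( p(N_1,s) - p(N_2,s) \bigr)\, n_1(ds)
    + \int_0^\infty p(N_2,s)\, (n_1 - n_2)(ds).
\end{equation*}
The first term is controlled by $L\,|N_1-N_2|\,\|n_1\|_{\mathrm{TV}}$ via \eqref{p2}, and the second by $\|p\|_\infty \|n_1-n_2\|_{\mathrm{TV}}$. Rearranging gives
\begin{equation*}
  \bigl(1 - L\|n_1\|_{\mathrm{TV}}\bigr)\, |N_1 - N_2|
  \leq \|p\|_\infty\, \|n_1 - n_2\|_{\mathrm{TV}},
\end{equation*}
and dividing by the positive factor on the left yields \eqref{eq:N-stability}. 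There is no serious obstacle; the only point to be careful about is choosing the splitting that puts $n_1$ inside the Lipschitz term rather than $n_2$, so that the factor appearing on the left matches the one in the denominator of the claimed estimate.
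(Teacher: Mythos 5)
Your proposal is correct and follows essentially the same argument as the paper: the same contraction map $\Phi$ for existence and uniqueness, and the same splitting of $N_1 - N_2$ (with $n_1$ carrying the Lipschitz term) for the stability estimate. No issues.
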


\begin{proof}
  We define the map $\Phi \: \R \to \R$ by
  \begin{equation*}
    \Phi(N) := \int_0^\infty p(N, s) n(s) \d s,
  \end{equation*}
  and we notice that for any $N_1, N_2 \in \R$,
  \begin{equation*}
    |\Phi(N_1) - \Phi(N_2)|
    \leq
    \|p(N_1, \cdot) - p(N_2, \cdot)\|_{\infty}
    \|n\|_{\mathrm{TV}}
    \leq
    L |N_1 - N_2| \|n\|_{\mathrm{TV}}.
  \end{equation*}
  Since $L < 1 / \|n\|_{\mathrm{TV}}$, the map $\Phi$ is contractive
  and has a unique fixed point, which is a solution to
  \eqref{eq:N-def}. For the second part of the lemma, consider
  $n_1, n_2 \in \mathcal{M}(\R^+_0)$ and $N_1$, $N_2$ the corresponding
  solutions to \eqref{eq:N-def}. Then
  \begin{multline*}
    |N_1 - N_2|
    \leq
    \int_0^\infty |p(N_1, s) - p(N_2,s)| n_1(s) \d s
    +
    \left|
      \int_0^\infty p(N_2, s) (n_1(s) - n_2(s)) \d s
    \right|
    \\
    \leq
    L |N_1 - N_2| \|n_1\|_{\mathrm{TV}}
    +
    \|p\|_{\infty} \|n_1 - n_2 \|_{\mathrm{TV}},
  \end{multline*}
  which shows \eqref{eq:N-stability}.
\end{proof}

\begin{thm}[Well-posedness of \eqref{eq:nofatigue} in measures]
  \label{thmwellposed}
  Assume that $p$ satisfies \eqref{p1} and the Lipschitz constant $L$
  in \eqref{p2} satisfies $L \leq 1/(4 \|n_0\|_{\mathrm{TV}})$.  For
  any given initial data $n_0 \in \mathcal{M}_+(\mathbb{R}_0^+)$ there
  exists a unique measure solution
  $n \in \mathcal{C}([0,+\infty); \mathcal{M}_+(\R^+_0))$ of
  \eqref{eq:nofatigue} in the sense of Definition
  \ref{defnmeasuresoln}. In addition, if $n_1$, $n_2$ are any two mild
  measure solutions to \eqref{eq:nofatigue} (with possibly different
  initial data) defined on any interval $[0,T)$ then
  \begin{equation}
    \label{eq:t-stability1}
    \| n_1(t) - n_2(t) \|_{\mathrm{TV}}
    \leq
    \| n_1(0) - n_2(0) \|_{\mathrm{TV}}
    \,
    e^{4 \|p\|_{\infty} t }
    \qquad
    \text{for all $t \in [0,T)$.}
  \end{equation}
\end{thm}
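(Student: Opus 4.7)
The plan is a Banach fixed-point argument on the Duhamel formulation, localised in time, followed by a global extension using mass conservation. For constants $M>0$ and $T^*>0$ to be chosen, I work in the complete metric space
\begin{equation*}
X := \{ n \in \mathcal{C}([0,T^*],\mathcal{M}_+(\R^+_0)) \mid n(0)=n_0,\ \sup_{t\in[0,T^*]}\|n(t)\|_{\mathrm{TV}} \leq M \},
\end{equation*}
endowed with $d(n_1,n_2) = \sup_{t}\|n_1(t)-n_2(t)\|_{\mathrm{TV}}$. Given $n\in X$, Lemma \ref{lem:N-stability} (which applies when $LM<1$, hence in particular when $L\leq 1/(4\|n_0\|_{\mathrm{TV}})$ and we take $M$ close to $\|n_0\|_{\mathrm{TV}}$) uniquely defines $N(t)\in\mathcal{C}([0,T^*];\R)$ by $N(t)=\int p(N(t),s)n(t,s)\d s$, and $N$ depends continuously on $n$ in the bounded-Lipschitz/total-variation topology. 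I then define $\mathcal{T}n$ by the right-hand side of \eqref{soln1}.

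Next I verify $\mathcal{T}:X\to X$. Since $T_t$ is an isometry on $(\mathcal{M},\|\cdot\|_{\mathrm{TV}})$ and $\|\delta_0\|_{\mathrm{TV}}=1$, a triangle inequality together with $|N(\tau)|\leq \|p\|_\infty\|n(\tau)\|_{\mathrm{TV}}\leq \|p\|_\infty M$ yields $\|\mathcal{T}n(t)\|_{\mathrm{TV}}\leq \|n_0\|_{\mathrm{TV}}+2\|p\|_\infty M\, t$, so the invariance $\|\mathcal{T}n(t)\|_{\mathrm{TV}}\leq M$ holds on a sufficiently short interval. For positivity I write $\mathcal{T}n$ explicitly along characteristics via \eqref{eq:delta-integral-alternative}: with $n$ fixed, the equation is an ODE-in-$s$ along the lines $s-t=\mathrm{const}$, whose solution is nonnegative because $p\geq 0$, $n\geq 0$ and $N(\tau)\geq 0$; equivalently the Picard iterates starting from $T_t n_0$ remain in $\mathcal{M}_+$. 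For contractivity, I take $n_1,n_2\in X$ with associated $N_1,N_2$, and estimate
\begin{equation*}
\|\mathcal{T}n_1(t)-\mathcal{T}n_2(t)\|_{\mathrm{TV}}
\leq \int_0^t \|p(N_1,\cdot)n_1-p(N_2,\cdot)n_2\|_{\mathrm{TV}}\d\tau + \int_0^t|N_1(\tau)-N_2(\tau)|\d\tau,
\end{equation*}
splitting the first integrand as $\|p\|_\infty \|n_1-n_2\|_{\mathrm{TV}}+L M|N_1-N_2|$ and invoking Lemma \ref{lem:N-stability} to bound $|N_1-N_2|\leq \|p\|_\infty(1-LM)^{-1}\|n_1-n_2\|_{\mathrm{TV}}$. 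For $T^*$ small enough the Lipschitz constant is strictly less than one, and Banach's theorem produces a unique fixed point, which is the desired mild solution on $[0,T^*]$.

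To extend to $[0,+\infty)$ I apply Lemma \ref{lem:mass-conservation}: $\|n(T^*)\|_{\mathrm{TV}}=\|n_0\|_{\mathrm{TV}}$, so the smallness hypothesis on $L$ is preserved and the local argument may be restarted from $n(T^*)$; since $T^*$ may be chosen depending only on $\|n_0\|_{\mathrm{TV}}$, $\|p\|_\infty$ and $L$, a finite number of iterations covers any compact interval. The stability estimate \eqref{eq:t-stability1} is obtained by subtracting the Duhamel formulas for two solutions $n_1,n_2$, passing to $\|\cdot\|_{\mathrm{TV}}$ using the isometry property of $T_t$, and applying the same expansion together with $|N_1(\tau)-N_2(\tau)|\leq \|p\|_\infty(1-L\|n_1\|_{\mathrm{TV}})^{-1}\|n_1(\tau)-n_2(\tau)\|_{\mathrm{TV}}$; the condition $L\|n_i\|_{\mathrm{TV}}\leq 1/4$ makes every numerical factor explicit, yielding a differential inequality with constant $\leq 4\|p\|_\infty$, and Grönwall's lemma closes the estimate.

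The main obstacle is the fully implicit nonlinear coupling: $N(t)$ is not given but defined as a fixed point of a relation involving $n(t)$ itself, and $N$ appears simultaneously in the absorption coefficient and in the boundary mass source $N\delta_0$. The smallness condition $L\leq 1/(4\|n_0\|_{\mathrm{TV}})$ is precisely what makes Lemma \ref{lem:N-stability} usable uniformly in time, converting the implicit coupling into a Lipschitz dependence of $N$ on $n$ in total variation; once this is in hand, everything else (the contraction estimate, the Grönwall stability, and the handling of the Dirac source via \eqref{eq:delta-integral-alternative}) reduces to book-keeping in the $\|\cdot\|_{\mathrm{TV}}$ norm, which behaves as nicely as an $L^1$ norm under the translation semigroup $T_t$.
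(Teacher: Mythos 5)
Your proposal is correct and follows essentially the same route as the paper: a Banach fixed-point argument on the Duhamel formula in a ball of continuous curves of nonnegative measures bounded in total variation, with $N(t)$ recovered from $n(t)$ via Lemma \ref{lem:N-stability}, global extension by mass conservation, and the stability estimate \eqref{eq:t-stability1} by Gr\"onwall. The only cosmetic difference is the choice of the invariant radius: the paper takes $C=2\|n_0\|_{\mathrm{TV}}$ (for which $LC\leq 1/2$ under the stated hypothesis), rather than $M$ close to $\|n_0\|_{\mathrm{TV}}$.
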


\begin{rem}
  We notice that the condition that $L$ is small is already needed
  here, since otherwise the problem is \emph{not} well-posed: consider
  for example the case $p(N,s) := N$, for which a solution should
  satisfy
  \begin{equation*}
    N(t) = N(t) \int_0^\infty n(t,s) \d s,
  \end{equation*}
  which only allows two options: either $N(t) = 0$ or
  $\int_0^\infty n(t,s) \d s = 1$. If $\int_0^\infty n_0(s) \d s = 1$
  then the second option holds and there are infinitely many solutions
  (since the choice of $N = N(t)$ is free). If
  $\int_0^\infty n_0(s) \d s \neq 1$ then $N(t)$ must be $0$ for all
  $t > 0$. In this latter case, either $N_0 = 0$ (and then the only
  solution is just pure transport: $n(t,s) = n_0(s-t)$ for $s > t$,
  $n(t,s) = 0$ otherwise) or $N_0 \neq 0$ (and there there are no
  solutions).

  Similar ill-posed examples can be easily designed with firing rates
  of the form $p(N,s) = f(N)g(s)$.
\end{rem}

\begin{proof}[Proof of Theorem \ref{thmwellposed}]
	The proof of this result is a standard fixed-point argument as
	followed for example in \citet{CCC13}, or in \citet{PPS10} for $L^1$
	solutions.

	Let us first show existence of a solution for a nonnegative initial
	measure $n_0 \in \mathcal{M}_+(\R^+_0)$. If $n_0 = 0$ it is clear that
	setting $n(t)$ equal to the zero measure on $\R^+_0$ for all $t$
	defines a solution, so we assume $n_0 \neq 0$. Fix $C, T > 0$, to be
	chosen later. Consider the complete metric space
	\begin{equation*}
	\mathcal{X}=
	\{ n \in \mathcal{C}([0,T], \mathcal{M}_+(\R^+_0)) \mid
	n(0) = n_0,
	\
	\|n(t) \|_{\mathrm{TV}} \leq C \text{ for all $t \in [0,T]$} \},
	\end{equation*}
	endowed with the norm
	\begin{equation*}
	\|n\|_{\mathcal{X}} := \sup_{t \in [0,T]} \|n(t)\|_{\mathrm{TV}}.
	\end{equation*}
	We remark that $\mathcal{C}([0,T], \mathcal{M}(\R^+_0))$ refers to
	functions which are continuous in the bounded Lipschitz topology,
	\emph{not} in the total variation one. Define an operator
	$\Psi \: \mathcal{X} \to \mathcal{X}$ by
	\begin{align}
	\label{bp}
	\Psi[n](t):= T_t n_0
	- \int_0^t T_{t-\tau} \big(p (N(\tau), \cdot)
	n(\tau) \big) \d \tau
	+
	\int_0^t T_{t-\tau} \big( N(\tau) \delta_0 \big) \d \tau
	\end{align}
	for all $n \in \mathcal{X}$, where $N(t)$ is defined implicitly (see
	Lemma \ref{lem:N-stability}) as
	\begin{equation}
	\label{eq:N-from-n}
	N(t) = \int_0^\infty p(N(t),s) n(t,s) \d s
	\qquad
	\text{for $t \in [0,T]$}.
	\end{equation}
	The definition of $\Psi[n]$ indeed makes sense, since both
	$\tau \mapsto T_{t-\tau} \big(p (N(\tau), \cdot) n(\tau) \big)$ and
	$\tau \mapsto T_{t-\tau} \big( N(\tau) \delta_0 \big)$ are
	continuous functions from $[0,T]$ to $\mathcal{M}(\R^+_0)$, hence
	integrable (in the sense of the Bochner integral).
	
	We first check that $\Psi[n]$ is indeed in $\mathcal{X}$. It is easy
	to see that $t \mapsto \Psi[n](t)$ is continuous in the bounded
	Lipschitz topology, and it is a nonnegative measure for each
	$t \in [0,T]$. We also have
	\begin{multline*}
	\|\Psi[n](t)\|_{\mathrm{TV}}
	\leq
	\|n_0\|_{\mathrm{TV}}
	+
	\int_0^t \| p (N(\tau), \cdot)
	n(\tau,\cdot) \|_{\mathrm{TV}} \d \tau
	+ \int_0^t \| N(\tau) \delta_0 \|_{\mathrm{TV}} \d \tau
	\\
	\leq
	\|n_0\|_{\mathrm{TV}}
	+
	T C \|p\|_\infty 
	+ T \|N\|_{L^\infty([0,T])}
	\leq
	\|n_0\|_{\mathrm{TV}}
	+
	2 T C \|p\|_\infty .
	\end{multline*}
	We choose
	\begin{equation}
	\label{eq:rt1}
	T \leq \frac{1}{4 \|p\|_\infty}
	\quad \text{and} \quad
	C := 2 \|n_0\|_{\mathrm{TV}},
	\end{equation}
	so that
	\begin{equation*}
	\|n_0\|_{\mathrm{TV}}
	+
	2 T C \|p\|_\infty
	\leq
	\|n_0\|_{\mathrm{TV}}
	+
	\frac{C}{2}
	\leq
	C.
	\end{equation*}
	Hence with these conditions on $T$ and $C$ we have
	$\Psi[n] \in \mathcal{X}$.
	
	Let us show that $\Psi$ is a contraction mapping. Take
	$n_1, n_2 \in \mathcal{X}$ and let $N_1, N_2$ be defined by
	\eqref{eq:N-from-n} corresponding to $n_1$ and $n_2$,
	respectively. We have
	\begin{multline*}
	\| \Psi[n_1](t) - \Psi[n_2](t) \|_{\mathrm{TV}}
	\leq
	\int_0^t \| (p (N_1(\tau), \cdot) - p(N_2(\tau),\cdot))
	n_1(\tau,\cdot)  \|_{\mathrm{TV}} \d \tau
	\\
	+ \int_0^t
	\| p (N_2(\tau), \cdot) (n_1(\tau) - n_2(\tau)) \|_{\mathrm{TV}} \d \tau
	+ \int_0^t \| (N_1(\tau) - N_2(\tau)) \delta_0 \|_{\mathrm{TV}} \d \tau
	\\
	=: T_1 + T_2 + T_3.
	\end{multline*}
	We bound each term separately. For $T_1$, since
	$L \leq 1 / ({4 \|n_0\|_{\mathrm{TV}}})$, using
	Lemma~\ref{lem:N-stability} we have
	\begin{multline}
	\label{eq:b1}
	T_1
	\leq
	T C L \sup_{\tau \in [0,T]} | N_1(\tau) - N_2(\tau)|
	\\
	\leq
	2 T C L \|p\|_{\infty} \| n_1 - n_2 \|_{\mathcal{X}}
	\leq
	T \|p\|_{\infty} \| n_1 - n_2 \|_{\mathcal{X}}.
	\end{multline}
	For $T_2$,
	\begin{equation}
	\label{eq:b2}
	T_2 \leq T \|p\|_{\infty} \|n_1 - n_2\|_{\mathcal{X}},
	\end{equation}
	and for $T_3$, using again Lemma \ref{lem:N-stability},
	\begin{equation}
	\label{eq:b3}
	T_3 \leq
	T \sup_{\tau \in [0,T]} | N_1(\tau) - N_2(\tau)|
	\leq
	2 T \|p\|_{\infty} \|n_1 - n_2\|_{\mathcal{X}}.
	\end{equation}
	Putting equations \eqref{eq:b1}--\eqref{eq:b3} together and taking
	the supremum over $0 \leq t \leq T$,
	\begin{equation*}
	\| \Psi[n_1] - \Psi[n_2] \|_{\mathcal{X}}
	\leq
	4 T \|p\|_{\infty} \|n_1 - n_2\|_{\mathcal{X}}.
	\end{equation*}
	Taking now $T \leq 1 / (8 \|p\|_{\infty})$ ensures that $\Psi$ is
	contractive, so it has a unique fixed point in $\mathcal{X}$, which
	is a mild measure solution on $[0,T]$. If we call $n$ this fixed
	point, since $\|n(T)\|_{\mathrm{TV}} = \|n_0\|_{\mathrm{TV}}$ by
	mass conservation (see Lemma \ref{lem:mass-conservation}), we may
	repeat this argument to continue the solution on $[T, 2T]$, $[2T,
	3T]$, showing that there is a solution defined on $[0,+\infty)$.
	
	In order to show stability of solutions with respect to the initial
	data (which implies uniqueness of solutions), take two measures
	$n_0^1, n_0^2 \in \mathcal{M}_+(\R^+_0)$, and consider two solutions
	$n_1$, $n_2$ with initial data $n_0^1$, $n_0^2$ respectively. We
	have
	\begin{multline}
	\label{eq:stab-prf-1}
	\| n_1(t) - n_2(t) \|_{\mathrm{TV}}
	\leq
	\| n_0^1 - n_0^2\|_{\mathrm{TV}}
	+
	\int_0^t \| (p (N_1(\tau), \cdot) - p(N_2(\tau),\cdot))
	n_1(\tau,\cdot)  \|_{\mathrm{TV}} \d \tau
	\\
	+ \int_0^t
	\| p (N_2(\tau), \cdot) (n_1(\tau) - n_2(\tau)) \|_{\mathrm{TV}} \d \tau
	+ \int_0^t \| (N_1(\tau) - N_2(\tau)) \delta_0 \|_{\mathrm{TV}} \d \tau,
	\end{multline}
	and with very similar arguments as before we obtain that
	\begin{multline}
	\label{eq:stab-prf-2}
	\| n_1(t) - n_2(t) \|_{\mathrm{TV}}
	\leq
	\| n_0^1 - n_0^2\|_{\mathrm{TV}}
	+ 2 L \|n_0\|_{\mathrm{TV}} \|p\|_{\infty} \int_0^t \|n_1(\tau)
	- n_2(\tau) \|_{\mathrm{TV}} \d \tau
	\\
	+ \|p\|_\infty \int_0^t 
	\| n_1(\tau) - n_2(\tau) \|_{\mathrm{TV}} \d \tau
	+ 2 \|p\|_{\infty}
	\int_0^t \| n_1(\tau) - n_2(\tau) \|_{\mathrm{TV}} \d \tau
	\\
	\leq
	\| n_0^1 - n_0^2\|_{\mathrm{TV}}
	+ 4 \|p\|_{\infty}
	\int_0^t \|n_1(\tau) - n_2(\tau) \|_{\mathrm{TV}} \d \tau.
	\end{multline}
	Gronwall's inequality then implies \eqref{eq:t-stability1}.  
\end{proof}

\paragraph{Weak solutions}

Definition \ref{defnmeasuresoln} is convenient for finding solutions,
but later we will need a more manageable form:

\begin{dfn}(Weak solution to \eqref{eq:nofatigue})
	\label{dfn:weak_solution}
	Assume $p$ satisfies \eqref{p1} and is nonnegative. A couple of
	functions $n \in \mathcal{C}([0, T), \mathcal{M}_+(\R^+_0))$ and
	$N \in \mathcal{C}([0,T), [0,+\infty))$, defined on an interval
	$[0, T)$ for some $T \in (0,+\infty]$, is called a \emph{weak
		measure solution} to \eqref{eq:nofatigue} with initial data
	$n_0 \in \mathcal{M}(\R^+_0)$ and $N_0 \in \R$ if it satisfies
	$n(0) = n_0$, $N(0) = N_0$, and for each $\varphi \in
	\mathcal{C}^\infty_{\mathrm{c}}(0,+\infty)$ the function $t \mapsto
	\int_0^\infty \varphi(s) n(t,s) \d s$ is absolutely continuous and
	\begin{multline}
	\label{eq:weak_solution}
	\ddt \int_0^\infty \varphi(s) n(t,s) \d s
	\\
	= \int_0^\infty \p_s \varphi(s) n(t,s) \d s
	- \int_0^\infty p (N(t), s) n(t,s) \varphi(s) \d s
	+ \int_0^\infty N(t) \delta_0(s) \varphi(s) \d s.
	\end{multline}
	for almost all $t \in [0,T)$, and
	\begin{equation*}
	N(t) = \int_0^\infty p(N(t), s) n(t,s) \d s,
	\qquad \text{for all $t \in [0, T)$.}
	\end{equation*}
	
\end{dfn}

Equivalence results between definitions based on the Duhamel formula
and definitions of weak solutions based on integration against a test
function are fairly common. Here we use the main theorem in
\citet{Ball1977} with
$f(t,\cdot) = -p(N(t), \cdot) n(t,\cdot) - N(t) \delta_0(\cdot)$,
which implies that mild solutions of our equation are are equivalent
to weak solutions:

\begin{thm}[\citet{Ball1977}]
	\label{thm:mild-weak}
	Assume $p$ satisfies \eqref{p1} and is nonnegative, and take
	$T \in (0,+\infty]$. A function
	$n \: [0,T) \to \mathcal{M}_+(\R^+_0)$ is a weak measure solution
	(cf. Definition \ref{dfn:weak_solution}) to \eqref{eq:nofatigue} if
	and only if it is a mild measure solution (cf. Definition
	\ref{defnmeasuresoln}).
\end{thm}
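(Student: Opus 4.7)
The plan is to invoke the general equivalence result of \citet{Ball1977} between mild and weak solutions of abstract inhomogeneous Cauchy problems $\partial_t n = A n + f(t)$ on a Banach space, applied to the translation semigroup $(T_t)_{t \geq 0}$ (with generator $A = -\partial_s$) and the forcing term
\[
f(t) := -p(N(t),\cdot)\, n(t,\cdot) + N(t) \delta_0.
\]
Given a mild measure solution $(n, N)$ in the sense of Definition \ref{defnmeasuresoln}, the boundedness of $p$ and the continuity of $n$ and $N$ guarantee that $t \mapsto f(t)$ is continuous from $[0,T)$ into $\mathcal{M}$ for the bounded Lipschitz norm and locally bounded in total variation, which is the regularity needed to apply Ball's framework.

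For the direction mild $\Rightarrow$ weak, I would test \eqref{soln1} against an arbitrary $\varphi \in \mathcal{C}^\infty_{\mathrm{c}}(0,+\infty)$. Using the defining identity $\int \varphi(s) T_t \mu(s) \d s = \int \varphi(s+t) \mu(s) \d s$ from \eqref{eq:Tt-def} together with Fubini's theorem, the Duhamel formula rewrites as
\[
\int_0^\infty \varphi(s) n(t,s) \d s
= \int_0^\infty \varphi(s+t) n_0(s) \d s
+ \int_0^t \int_0^\infty \varphi(s + t - \tau)\, f(\tau)(\d s)\, \d \tau.
\]
Continuity of $\tau \mapsto f(\tau)$ makes the right-hand side absolutely continuous in $t$; differentiating produces \eqref{eq:weak_solution}, with the Dirac delta contribution vanishing since $\varphi$ is supported away from $0$.

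For the converse, weak $\Rightarrow$ mild, I would follow Ball's classical strategy: fix $t \in [0, T)$ and $\psi \in \mathcal{C}^\infty_{\mathrm{c}}(0, +\infty)$, and apply \eqref{eq:weak_solution} with the time-dependent test function $\tau \mapsto \psi(\,\cdot\, + (t - \tau))$, noting that $\p_\tau [\psi(s + t - \tau)] + \p_s[\psi(s + t - \tau)] = 0$ so that the transport terms cancel after integration by parts in $\tau$. Integrating over $\tau \in [0,t]$ gives
\[
\int \psi(s) n(t,s) \d s
= \int \psi(s+t) n_0(s) \d s
+ \int_0^t \int \psi(s + t - \tau)\, f(\tau)(\d s)\, \d \tau,
\]
which is \eqref{soln1} tested against $\psi$. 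Density of $\mathcal{C}^\infty_{\mathrm{c}}(0,+\infty)$ in $\mathcal{C}_0(\R^+_0)$ then characterizes $n(t)$ uniquely as a measure, recovering the Duhamel identity.

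The main obstacle is technical rather than conceptual: $(T_t)_{t \geq 0}$ is not strongly continuous on $(\mathcal{M}, \|\cdot\|_{\mathrm{TV}})$, since for instance $\|T_t \delta_0 - \delta_0\|_{\mathrm{TV}} = 2$ for every $t > 0$, so Ball's theorem cannot be applied verbatim in that Banach space. One must instead work with a weaker topology such as the bounded Lipschitz norm or the weak-$*$ topology, in which strong continuity holds, and verify that $\mathcal{C}^\infty_{\mathrm{c}}(0,+\infty)$ is rich enough in the domain of the adjoint generator to uniquely characterize weak solutions. If this abstract route turns out to be delicate, the direct test-function computations sketched above (which are essentially the content of Ball's proof specialized to this situation) establish the equivalence without reliance on the general theorem.
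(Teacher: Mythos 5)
Your proposal follows essentially the same route as the paper, whose entire proof consists of invoking the main theorem of Ball (1977) with the forcing term $f(t,\cdot) = -p(N(t),\cdot)\,n(t,\cdot) + N(t)\delta_0$ applied to the translation semigroup; you even get the sign of the $\delta_0$ term right, consistent with \eqref{soln1}, where the paper's text contains a sign typo. Your additional observation that $(T_t)_{t\ge 0}$ fails to be strongly continuous on $(\mathcal{M},\|\cdot\|_{\mathrm{TV}})$, so that Ball's theorem must be applied in the bounded Lipschitz topology (exactly the topology in which the paper defines $(T_t)_{t\ge 0}$) or replaced by the direct test-function computation you sketch, is a legitimate refinement that the paper glosses over.
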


\subsection{The linear equation}
\label{sec:linear}

When $p = p(N,s)$ does not depend on $N$, equation
\eqref{eq:nofatigue} becomes linear:
\begin{align}
\label{eq:nofatigue-linear}
\begin{cases}
\begin{split}
&\frac{\p}{\p t} n(t,s)
+ \frac{\p}{\p s} n(t,s) + p(s)n(t,s) =0,
\quad t,s >0,
\\
&N(t) := n(t, s=0) =  \int_{0}^{+\infty} p(s)n(t,s) \d s,
\quad t>0,
\\
&n(t=0, s) = n_0(s), \quad s \geq 0.
\end{split}
\end{cases}
\end{align}
This is referred to as the ``no-connectivity'' case or the ``$J=0$
case'' in \citet{PPS10}.

\subsubsection{Well-posedness}

\label{sec:linear-well-posed}

We give a similar definition for \textit{mild measure solutions}:

\begin{dfn}
  \label{defnmeasuresoln-linear}
  Assume $p \: [0,+\infty) \to [0,+\infty)$ is a bounded measurable
  function. A function
  $n \in \mathcal{C}([0, T), \mathcal{M}_+(\R^+_0))$, defined on an
  interval $[0, T)$ for some $T \in (0,+\infty]$, is called a
  \emph{mild measure solution} to \eqref{eq:nofatigue-linear} with
  initial data $n_0 \in \mathcal{M}(\R^+_0)$ if it satisfies
  $n(0) = n_0$ and
  \begin{equation}
    \label{soln-linear}
    n(t,s) = T_t n_0(s)
    - \int_0^t T_{t-\tau} \big(p (\cdot)
    n(\tau,\cdot) \big) (s) \d \tau
    + \int_0^t T_{t-\tau} \big( N(\tau) \delta_0 \big) (s) \d \tau
  \end{equation}
  for all $t \in [0,T)$, with
  \begin{equation}
    \label{soln-linear-N}
    N(t) := \int_0^\infty p(s) n(t,s) \d s,
    \qquad t \in [0, T).
  \end{equation}
\end{dfn}

Our existence result stated in \ref{thmwellposed} easily gives the following as
a consequence:

\begin{thm}[Well-posedness of \eqref{eq:nofatigue-linear} in measures]
  \label{thm:wellposed-linear}
  Assume that $p \: [0,+\infty) \to [0,+\infty)$ is bounded, Lipschitz
  and nonnegative. For any given initial data
  $n_0 \in \mathcal{M}(\mathbb{R}^+)$ there exists a unique measure
  solution $n \in \mathcal{C}([0,+\infty); \mathcal{M}(\R^+_0))$ of
  the linear equation \eqref{eq:nofatigue-linear} in the sense of
  Definition \ref{defnmeasuresoln-linear}. In addition, if $n$ is any
  mild measure solution to \eqref{eq:nofatigue-linear} defined on any
  interval $[0,T)$ then
  \begin{equation}
    \label{eq:t-stability}
    \| n(t) \|_{\mathrm{TV}}
    \leq
    \| n(0) \|_{\mathrm{TV}}
    \qquad
    \text{for all $t \in [0,T)$.}
  \end{equation}
\end{thm}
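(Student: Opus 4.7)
The plan is to obtain this statement as a direct corollary of Theorem \ref{thmwellposed}. The linear equation \eqref{eq:nofatigue-linear} is precisely the special case of \eqref{eq:nofatigue} in which $p(N,s)$ does not depend on $N$, so the Lipschitz constant $L$ from \eqref{p2} is zero. In particular, the smallness condition $L \leq 1/(4\|n_0\|_{\mathrm{TV}})$ required in Theorem \ref{thmwellposed} is satisfied for \emph{every} initial datum, with no restriction on its total variation.

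For nonnegative initial data $n_0 \in \mathcal{M}_+(\R^+_0)$ the conclusion is essentially immediate. Applying Theorem \ref{thmwellposed} with this $N$-independent firing rate produces a unique mild measure solution $n \in \mathcal{C}([0,+\infty),\mathcal{M}_+(\R^+_0))$ of \eqref{eq:nofatigue}, which under $p(N,s)=p(s)$ is precisely a mild solution of \eqref{eq:nofatigue-linear} in the sense of Definition \ref{defnmeasuresoln-linear}. Mass conservation from Lemma \ref{lem:mass-conservation} yields the equality $\|n(t)\|_{\mathrm{TV}} = \|n_0\|_{\mathrm{TV}}$, which in particular implies \eqref{eq:t-stability}.

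For general signed initial data $n_0 \in \mathcal{M}(\R^+_0)$ I would extend Definition \ref{defnmeasuresoln-linear} in the obvious way, requiring $n \in \mathcal{C}([0,T),\mathcal{M}(\R^+_0))$ while keeping \eqref{soln-linear}--\eqref{soln-linear-N}, and then exploit linearity. Writing the Hahn-Jordan decomposition $n_0 = n_0^+ - n_0^-$, I would set $n(t) := n_+(t) - n_-(t)$, where $n_\pm$ are the nonnegative solutions constructed in the previous step with data $n_0^\pm$. Because \eqref{soln-linear}--\eqref{soln-linear-N} are linear in $n$ (this is exactly the point of the $N$-independent firing rate), $n$ solves them. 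The stability estimate then follows from
\begin{equation*}
    \|n(t)\|_{\mathrm{TV}}
    \leq \|n_+(t)\|_{\mathrm{TV}} + \|n_-(t)\|_{\mathrm{TV}}
    = \|n_0^+\|_{\mathrm{TV}} + \|n_0^-\|_{\mathrm{TV}}
    = \|n_0\|_{\mathrm{TV}},
\end{equation*}
which is the general TV-contraction property of stochastic semigroups noted immediately after the definitions in Section \ref{sec:sg}.

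Uniqueness in the signed setting follows by rerunning the Gronwall estimate \eqref{eq:stab-prf-1}--\eqref{eq:stab-prf-2} on the difference of two candidate mild solutions: with $L=0$ the contribution from $|p(N_1,\cdot)-p(N_2,\cdot)|$ disappears, the surviving terms involve only $\|p\|_\infty$, and Gronwall's lemma forces $n_1 \equiv n_2$ on $[0,T)$. There is no genuine obstacle here, since the linear problem is strictly simpler than the nonlinear one already handled in Theorem \ref{thmwellposed}; the only mild subtlety is the harmless extension of Definition \ref{defnmeasuresoln-linear} to signed measures, which is imposed by the fact that the theorem statement admits initial data in $\mathcal{M}(\R^+_0)$ rather than only $\mathcal{M}_+(\R^+_0)$.
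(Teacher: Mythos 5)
Your proposal is correct and follows essentially the same route as the paper: reduce to Theorem \ref{thmwellposed} via the Hahn--Jordan splitting $n_0 = n_0^+ - n_0^-$ and linearity, prove uniqueness by the Gronwall estimate of \eqref{eq:stab-prf-1}--\eqref{eq:stab-prf-2}, and obtain \eqref{eq:t-stability} from the triangle inequality together with mass conservation of the two nonnegative solutions. The only cosmetic difference is the order (the paper establishes uniqueness before invoking the splitting to bound an arbitrary solution), which your argument also covers.
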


\begin{proof}
  This result can be mostly deduced from Theorem
  \ref{thmwellposed}. For the existence part, split $n_0$ into its
  positive and negative parts as $n_0 = n_0^+ - n_0^-$. Theorem
  \ref{thmwellposed} gives the existence of two solutions $n^+$ and
  $n^-$ with initial data $n_0^+$ and $n_0^-$, respectively; then
  $n := n^+ - n^-$ is a mild measure solution with initial data $n_0$.
  For uniqueness, if $n$ is any mild solution on $[0,T)$, the same
  argument as in \eqref{eq:stab-prf-1}--\eqref{eq:stab-prf-2} shows
  that
  \begin{equation*}
    \|n(t)\|_{\mathrm{TV}}
    \leq
    \| n_0 \|_{\mathrm{TV}}
    + 4 \|p\|_{\infty}
    \int_0^t \|n_1(\tau) \|_{\mathrm{TV}} \d \tau,
  \end{equation*}
  which gives by Gronwall's inequality that 
  \begin{equation*}
    \| n(t) \|_{\mathrm{TV}}
    \leq
    \| n_0 \|_{\mathrm{TV}}
    \,
    e^{ 4 \|p\|_{\infty} t }
    \qquad
    \text{for all $t \in [0,T)$.}
  \end{equation*}
  In particular, by linearity this implies solutions are
  unique. Finally, with the same argument as in Lemma
  \ref{lem:mass-conservation} one sees that for any solution $n$
  defined on $[0,T)$ it holds
  \begin{equation*}
    \int_{\R^+_0} n(t,s) \d s = \int_{\R^+_0} n_0(s) \d s
  \end{equation*}
  for all $t \in [0,T)$. Due to uniqueness, with the same splitting we
  used at the beginning of the proof we have $n(t) = n^+(t) - n^-(t)$,
  so
  \begin{equation*}
    \|n(t)\|_{\mathrm{TV}}
    \leq
    \|n^+(t)\|_{\mathrm{TV}}  +    \|n^-(t)\|_{\mathrm{TV}}
    =
    \|n^+_0\|_{\mathrm{TV}}  +    \|n^-_0\|_{\mathrm{TV}}
    = \|n_0\|_{\mathrm{TV}},
  \end{equation*}
  which finishes the proof.
\end{proof}

The above result allows us to define an evolution semigroup
$(S_t)_{t \geq 0}$ (in fact it is a $C_0$-semigroup on $\mathcal{M}$
with the bounded Lipschitz topology) by setting
\begin{equation*}
  S_t \: \mathcal{M} \to \mathcal{M},
  \qquad S_t(n_0) := n(t)
\end{equation*}
for any $n_0 \in \mathcal{M}$, where $n(t)$ is the mild measure
solution to \eqref{eq:nofatigue-linear} with initial data $n_0$.

\paragraph{Stationary solutions for the linear equation}

We remark that Theorem \ref{thm:ssfornofatigue} below implies that the
linear equation \eqref{eq:nofatigue-linear} has a unique stationary
solution in the space of probabilities on $[0,+\infty)$ (for $p$
bounded, Lipschitz, and satisfying \eqref{p4}); of course in this case
this solution is explicit, given by
\begin{equation*}
  n_*(s) := N_* e^{-\int_0^s p(\tau) \d \tau},
  \qquad s \geq 0,
\end{equation*}
where $N_*$ is the appropriate normalisation constant that makes this
a probability density. Although Theorem \ref{thm:ssfornofatigue} does
not rule out the existence of other stationary solutions which may not
be probabilities, since the solution is explicit it is not difficult
to see that, up to a constant factor, $n_*$ is the only stationary
solution within the set of all finite measures. This is also a
consequence of Doeblin's theorem below.

\subsubsection{Positive lower bound}

Our main result on the spectral gap for the linear operator is based
on the fact that for any initial probability distribution, solutions
have a universal lower bound after a fixed time. We give the following
lemma:
\begin{lem}
  \label{lem-Doeblin1}
  Let $p \:[0,+\infty) \to [0,+\infty)$ be bounded, Lipschitz function
  satisfying \eqref{p3} and \eqref{p4}, and consider the semigroup
  $(S_t)_{t \geq 0}$ given by the existence Theorem
  \ref{thm:wellposed-linear}. Then $S_{t_0}$ satisfies Doeblin's
  condition \eqref{eq:Doeblin} for $t_0 = 2s_*$ and $\alpha = p_{\min}s_* e^{- 2p_{\max} s_*}$. More precisely, for
  $t_0 = 2 s_*$ we have
  \begin{equation*}
    S_{2s_*} n_0(s) \geq  p_{\min} e^{- 2p_{\max} s_*} \1_{\{ 0 < s < s_* \}}
  \end{equation*}
  for all probability measures $n_0$ on $[0,+\infty)$.
\end{lem}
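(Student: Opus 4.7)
The plan is to use the method of characteristics for the linear transport equation together with the lower bound \eqref{p4} on $p$, via a two-step mass-transport argument. First I would establish that the solution $n(t) = S_t n_0$ decomposes into a ``transported-initial'' part supported on $[t, +\infty)$ and an absolutely continuous ``reborn'' part on $(0, t)$: using the mild formula \eqref{soln-linear} from Definition \ref{defnmeasuresoln-linear} together with the identity $\int_0^t T_{t-\tau}(N(\tau)\delta_0)(s)\d\tau = N(t-s)\1_{[0,t]}(s)$ noted in the remark following Definition \ref{defnmeasuresoln},
\begin{equation*}
  n(t, s) = N(t-s)\exp\Bigl(-\int_0^s p(\tau)\d\tau\Bigr) \qquad \text{for } 0 < s < t,
\end{equation*}
while on $[t, +\infty)$ the measure $n(t, \cdot)$ is the push-forward of $n_0$ under $a \mapsto a+t$, attenuated by the survival factor $\exp(-\int_a^{a+t} p(\tau)\d\tau)$.

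The target bound then reduces to a lower bound on $N(t')$ for $t' \in (s_*, 2s_*)$. From \eqref{p4},
\begin{equation*}
  N(t') = \int_0^\infty p(s) n(t', s)\d s \;\geq\; p_{\min} \int_{s_*}^{+\infty} n(t', s)\d s.
\end{equation*}
Since $t' \geq s_*$, the transported-initial part of $n(t', \cdot)$ is supported on $[t', +\infty) \subseteq [s_*, +\infty)$ and carries total mass at least
\begin{equation*}
  \int_0^\infty \exp\Bigl(-\int_a^{a+t'} p(\tau)\d\tau\Bigr) \d n_0(a) \;\geq\; e^{-p_{\max} t'} \|n_0\|_{\mathrm{TV}} = e^{-p_{\max} t'},
\end{equation*}
using only $p \leq p_{\max}$ and the fact that $n_0$ is a probability measure. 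Hence $N(t') \geq p_{\min} e^{-p_{\max} t'}$.

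Combining the two estimates at $t = 2s_*$ with $s \in (0, s_*)$ --- so that $t' := 2s_* - s \in (s_*, 2s_*)$ --- yields
\begin{equation*}
  S_{2s_*} n_0(s) = N(2s_* - s)\exp\Bigl(-\int_0^s p(\tau)\d\tau\Bigr) \geq p_{\min}\, e^{-p_{\max}(2s_*-s)}\, e^{-p_{\max} s} = p_{\min} e^{-2p_{\max} s_*},
\end{equation*}
which is exactly the claimed pointwise lower bound. The Doeblin condition then follows by rewriting the inequality as $S_{2s_*} n_0 \geq \alpha \nu$ with $\nu$ the uniform probability measure on $(0, s_*)$ and $\alpha = p_{\min} s_* e^{-2p_{\max} s_*}$.

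The only soft difficulty I anticipate is justifying the characteristic representation for a genuine measure $n_0$ (possibly with singular parts). This can be handled either by direct substitution of the proposed decomposition into \eqref{soln-linear} and verifying the Duhamel identity via the action of $T_\cdot$ on $p \cdot n(\tau, \cdot)$ and a Fubini exchange, or by regularising $n_0$ through convolution, applying the formula in the smooth case, and passing to the limit using the stability estimate of Theorem \ref{thm:wellposed-linear}. Note that the monotonicity hypothesis \eqref{p3} is not actually required for this argument --- only the two-sided bound \eqref{p4} is used.
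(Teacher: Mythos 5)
Your proposal is correct and follows essentially the same route as the paper: both decompose the solution into the attenuated transport of $n_0$ plus the contribution of the boundary source $N(\tau)\delta_0$, derive $N(t')\geq p_{\min}e^{-p_{\max}t'}$ for $t'>s_*$ from the transported mass sitting in $[s_*,+\infty)$, and feed that back into the source term to get the uniform lower bound on $(0,s_*)$ at $t=2s_*$. The only cosmetic difference is that you use the explicit characteristic formula where the paper introduces the auxiliary sub-semigroup $\tilde S_t$ and works with inequalities, and your observation that \eqref{p3} is not needed is consistent with the paper's argument, which likewise never uses it.
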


\begin{proof}
  We define a semigroup $\tilde{S}_t$ associated to the linear
  problem \begin{align} \label{eq:tilden}
            \begin{split}
              \begin{cases}
                \frac{\p}{\p t}\tilde{n}(t,s) + \frac{\p}{\p s} \tilde{n}(t,s) = -p(s) \tilde{n}(t,s), \quad t,s >0, \\
                \tilde{n}(t,0) = 0, \quad t>0\\
                \tilde{n}(0,s) = n_0(s), \quad s \geq 0,
              \end{cases}
            \end{split}
          \end{align} which has the explicit solution
          \begin{align*}
            \tilde{n}(t,s) =
            \begin{split}
              \begin{cases}
                n_0(s-t) e^{-\int_{0}^{t} p(s-t +\tau) \d \tau}, \qquad &s>t,\\
                0, \qquad &t>s.
              \end{cases}
	\end{split}
	\end{align*}
	Then we write the solution to the linear equation \eqref{eq:nofatigue-linear} as
        \begin{align*}
n(t,s) = \tilde{S}_t n_0(s) + \int_{0}^{t} \tilde{S}_{t-\tau} (N(\tau)\delta_0 )(s) \d \tau.
	\end{align*}
	For $s >t$ we have
        \begin{align*}
          n(t,s) \geq \tilde{S}_t n_0(s) &= n_0(s-t) e^{-\int_{0}^{t} p(s-t +\tau) \d \tau} \geq  n_0(s-t)e^{-p_{\max}t}\\
          \tilde{S}_{t-\tau} n_0(s) &\geq n_0(s-t+\tau) e^{-p_{\max} (t-\tau)}.
	\end{align*}
	Then for $t > s_*$ it holds that
        \begin{multline*}
          N(t) =  \int_{0}^{+\infty} p(s)n(t,s)\d s \geq p_{\min}e^{-p_{\max}t} \int_{s_*}^{\infty} n_0(s-t) \d s  \\ \geq p_{\min}e^{-p_{\max}t} \int_{0}^{+\infty} n_0(s) \d s  =  p_{\min}e^{-p_{\max}t}. 
	\end{multline*}
        Therefore, for any $s > 0$ and $t > s + s_*$ we have:
        \begin{multline*}
          n(t,s) \geq  \int_{0}^{t} \tilde{S}_{t-\tau} (N(\tau)\delta_0 )(s) \d \tau 
          \geq
          \int_{s_*}^{t} \tilde{S}_{t-\tau} (p_{\min}e^{-p_{\max} \tau} \delta_0 )(s) \d \tau
          \\ \geq p_{\min} \int_{s_*}^t e^{-p_{\max} \tau} e^{-p_{\max}(t-\tau)}
          \delta_0(s-t+\tau) \d \tau
          =  p_{\min} e^{-p_{\max} t} \1_{ \{0 < s < t- s_*\} }.
        \end{multline*}
	Hence for $t = 2 s_*$ and all $0 < s < s_*$ we obtain the result.
\end{proof}

\subsubsection{Spectral gap}

Exponential convergence to the equilibrium for the linear equation is an
immediate consequence of Theorem \ref{thm:Doeblin-semigroup}. We give the following proposition based on that:

\begin{prp}
  \label{prop2}
  For a given initial data $n_0 \in \mathcal{M}(\R^+)$, let
  $p \: [0,\infty) \to [0,+\infty)$ be bounded, Lipschitz function 
  satisfying \eqref{p3} and \eqref{p4}. Then, there exists a unique probability measure
  $n_* \in \mathcal{P}([0,+\infty))$ which is a stationary solution to
  \eqref{eq:nofatigue-linear}, and any other stationary solution is a
  multiple of it. Also, for \begin{equation*}
 C =\frac{1}{1-\alpha} >1 \text{ and }  \lambda := - \frac{\log (1-\alpha)}{t_0}
  \end{equation*} we have
  \begin{equation}
    \| S_{t} (n_0 - n_* )\|_{\mathrm{TV}}
    \leq
    Ce^{-\lambda t} \| n_0 - n_* \|_{\mathrm{TV}}, \text{ for all } t \geq 0.
  \end{equation}
  In addition, for $t_0 := 2 s_*$ we have
  \begin{equation}
  \label{eq:St0-contractive-nofatigue}
  \| S_{t_0} (n_1 - n_2) \|_{\mathrm{TV}}
  \leq
  (1-\alpha) \|n_1 - n_2\|_{\mathrm{TV}}
  \end{equation}
  for any probability distributions $n_1, n_2$, and with
  \begin{equation*}
  \alpha :=  p_{\mathrm{min}} s_*  e^{-2 p_{\mathrm{max}}
  	s_*}.
  \end{equation*}
\end{prp}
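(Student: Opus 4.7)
The plan is to derive Proposition \ref{prop2} directly from the abstract Doeblin theorem (Theorem \ref{thm:Doeblin-semigroup}) applied to the linear semigroup $(S_t)_{t \geq 0}$ constructed in Theorem \ref{thm:wellposed-linear}. The key input, which is the hard analytic step, has already been carried out in Lemma \ref{lem-Doeblin1}: it provides a \emph{universal} pointwise lower bound at time $t_0 = 2 s_*$ for the evolution of any initial probability measure. What remains is essentially a bookkeeping step: repackage that lower bound as a Doeblin minorisation and read off the conclusions.

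First I would fix $t_0 := 2 s_*$ and introduce the normalised reference probability measure
\begin{equation*}
  \nu := \frac{1}{s_*} \1_{[0,s_*]}(s) \d s,
\end{equation*}
which is well-defined since $s_* > 0$. Lemma \ref{lem-Doeblin1} gives, for every $n_0 \in \mathcal{P}([0,+\infty))$,
\begin{equation*}
  S_{t_0} n_0 (s) \geq p_{\min} e^{-2 p_{\max} s_*} \1_{[0,s_*]}(s)
  = \bigl( p_{\min} s_* e^{-2 p_{\max} s_*} \bigr) \nu(s)
  = \alpha \nu(s),
\end{equation*}
with $\alpha := p_{\min} s_* e^{-2 p_{\max} s_*}$. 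Integrating this inequality against the constant function $1$ and using that $S_{t_0} n_0$ is again a probability measure (by mass conservation, Lemma \ref{lem:mass-conservation}) forces $\alpha \leq 1$; the strict inequality $\alpha < 1$ follows because for a nondegenerate choice of $n_0$ (e.g.\ a Dirac mass placed far beyond $s_*$) $S_{t_0} n_0$ has mass outside $[0,s_*]$, so the minorisation cannot be saturated. This is exactly the Doeblin condition \eqref{eq:Doeblin} for the single operator $S_{t_0}$.

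With Doeblin's condition in hand, Theorem \ref{thm:Doeblin-semigroup} applies verbatim and yields, in one stroke, the existence of a unique stationary probability measure $n_* \in \mathcal{P}([0,+\infty))$, the exponential decay estimate \eqref{eq:mainfor2}-type bound with the constants $C = 1/(1-\alpha)$ and $\lambda = -\log(1-\alpha)/t_0 = -\log(1-\alpha)/(2 s_*)$, and the single-step contraction \eqref{eq:St0-contractive-Doeblin}, which is precisely \eqref{eq:St0-contractive-nofatigue} in the present notation.

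Finally, to upgrade uniqueness from probability measures to arbitrary stationary finite measures, I would argue by homogeneity: if $\mu \in \mathcal{M}_+(\R_0^+) \setminus \{0\}$ satisfies $S_t \mu = \mu$ for all $t \geq 0$, then $\mu / \|\mu\|_{\mathrm{TV}}$ is a stationary probability measure and thus coincides with $n_*$ by the uniqueness just proved, whence $\mu = \|\mu\|_{\mathrm{TV}}\, n_*$. For a general signed stationary $\mu$, linearity of the semigroup together with mass conservation reduces the claim to the nonnegative case after a Hahn decomposition, showing that the only stationary elements of $\mathcal{M}(\R_0^+)$ are scalar multiples of $n_*$. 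I do not anticipate a substantial obstacle here: all the real work (the regularisation-through-the-boundary that produces the positive lower bound) is already contained in Lemma \ref{lem-Doeblin1}, and the remaining steps are purely algebraic manipulations of the constants.
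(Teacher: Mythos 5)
Your proposal is correct and follows essentially the same route as the paper: the paper's proof of Proposition \ref{prop2} consists precisely of observing that Lemma \ref{lem-Doeblin1} gives the Doeblin condition for $S_{t_0}$ with $t_0=2s_*$ and $\alpha=p_{\min}s_*e^{-2p_{\max}s_*}$ and then invoking Theorem \ref{thm:Doeblin-semigroup}, which is what you do (your normalised $\nu$ and the check $\alpha<1$ being implicit there). The only step you gloss is the extension of uniqueness to signed stationary measures — the Hahn decomposition of a stationary signed measure need not have stationary positive and negative parts — but this is repaired in one line: if $S_t\mu=\mu$ and $m=\int\mu$, then $\mu$ and $m\,n_*$ have the same integral, so the contraction estimate \eqref{eq:St0-contractive-Doeblin} (valid for any two measures of equal mass, as shown in the proof of Theorem \ref{thm:Doeblin-semigroup}) gives $\|\mu-m\,n_*\|_{\mathrm{TV}}\leq(1-\alpha)\|\mu-m\,n_*\|_{\mathrm{TV}}$, forcing $\mu=m\,n_*$.
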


\begin{proof}
  We apply Theorem \ref{thm:Doeblin-semigroup},
  since Lemma \ref{lem-Doeblin1} shows that $S_{t_0}$ satisfies the
  Doeblin condition for $t_0 = 2s_*$. Moreover, \begin{align*}
  C = \frac{1}{1-\alpha }  = e^{\lambda t_0} = e^{- \log(1-p_{min}s_*e^{-2p_{\max}s_*})} =
  \frac{1}{1-p_{min}s_*e^{-2p_{\max}s_*}} >1. 
  \end{align*}
\end{proof}

\subsection{Stationary solutions for the nonlinear equation}

\begin{dfn}
	We say that a nonnegative function
	$n_* \in \mathcal{C}([0,+\infty)) \cap \mathcal{C}^1(0,+\infty)$ is
	a stationary solution to \eqref{eq:nofatigue} if it satisfies
	\begin{align}\label{eq:nofatss}
	\begin{cases}
	\begin{split}
	&\frac{\p}{\p s} n_*(s) + p(N_*,s)n_*(s) =0, \quad
	s >0,
	\\
	&n_*(0) =: N_* = \int_{0}^{+\infty} p(N_*,s)n_*(s) \d s.
	\end{split}
	\end{cases}
	\end{align}
\end{dfn}

The following result is essentially the same as that given in
\cite{PPS10}. There it is proved for a particular form of $p$, i.e. for
$p(x,s) = \1_{s > s^*(x)}$, for some nonnegative
$s^* \in C^1([0,+\infty))$ such that $\frac{d}{d x} s^*(x) \leq 0$
with $s^*(0) < 1$. So we
prove it here for completeness, and to adapt it to our precise
assumptions:

\begin{thm}
  \label{thm:ssfornofatigue}
  Assume \eqref{p1}, \eqref{p2}, \eqref{p4} for $p$ and also that
  \begin{equation*}
    L <   (p_{\max})^{-2} \left(\frac{s_* ^2}{2} + \frac{s_*}{p_{\min}} +
      \frac{1}{p_{\min}^ 2} \right)^{-1}.
  \end{equation*}
  Then there exists a unique probability measure $n_*$ which is a
  stationary solution to \eqref{eq:nofatigue}.
\end{thm}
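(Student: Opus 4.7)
The plan is to reduce the stationary problem \eqref{eq:nofatss} to a one-dimensional fixed-point equation for the unknown boundary value $N_* = n_*(0)$. For any candidate $N_* \geq 0$, the first-order linear ODE with prescribed initial condition has the unique $C^1$ solution
\[
  n_*(s) = N_* \exp\bigl(-\textstyle\int_0^s p(N_*, \tau) \d \tau\bigr).
\]
The integral self-consistency condition $N_* = \int_0^\infty p(N_*,s) n_*(s) \d s$ is automatic from this expression: since $p(N_*,s) n_*(s) = -\p_s n_*(s)$, it just reads $n_*(0) - n_*(+\infty) = N_*$, and $n_*(+\infty) = 0$ follows from \eqref{p4}. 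Hence the only nontrivial requirement is the probability normalisation $\int_0^\infty n_*(s) \d s = 1$, which is equivalent to $N_* F(N_*) = 1$, where
\[
  F(N) := \int_0^\infty \exp\bigl(-\textstyle\int_0^s p(N,\tau) \d \tau\bigr) \d s.
\]

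Next I would show that $\Phi(N) := 1/F(N)$ is a strict contraction on a compact interval. From \eqref{p4} one obtains the uniform bounds $1/p_{\max} \leq F(N) \leq s_* + 1/p_{\min}$, so $\Phi$ sends $[0,+\infty)$ into $I := [1/(s_* + 1/p_{\min}),\, p_{\max}]$, and a probability stationary solution corresponds to a fixed point of $\Phi$ in $I$. Once the contraction property on $I$ is established, the Banach fixed-point theorem gives existence and uniqueness of $N_*$, and therefore of $n_*$, simultaneously.

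The key quantitative step is a Lipschitz bound on $\Phi$. Differentiating under the integral,
\[
  F'(N) = -\int_0^\infty \Bigl(\int_0^s \p_N p(N,\tau) \d \tau\Bigr) \exp\bigl(-\textstyle\int_0^s p(N,\tau) \d \tau\bigr) \d s,
\]
so $|\p_N p| \leq L$ from \eqref{p2} gives $|F'(N)| \leq L \int_0^\infty s\, e^{-\int_0^s p(N,\tau) \d \tau} \d s$. Using \eqref{p4} to dominate the exponential by $\1_{\{s \leq s_*\}} + e^{-p_{\min}(s-s_*)}\1_{\{s > s_*\}}$, splitting at $s_*$ and integrating explicitly yields $|F'(N)| \leq L\bigl(s_*^2/2 + s_*/p_{\min} + 1/p_{\min}^2\bigr)$. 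Combined with $F(N) \geq 1/p_{\max}$ this gives
\[
  |\Phi'(N)| \leq \frac{|F'(N)|}{F(N)^2} \leq L\, p_{\max}^2 \left(\frac{s_*^2}{2} + \frac{s_*}{p_{\min}} + \frac{1}{p_{\min}^2}\right),
\]
which is strictly less than $1$ under precisely the smallness hypothesis on $L$ assumed in the theorem.

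The main obstacle is this contraction estimate: one must extract enough exponential decay from \eqref{p4} to control both the normalisation constant $F(N)$ from below and the moment $\int_0^\infty s\, e^{-\int_0^s p(N,\tau)\d\tau}\d s$ from above, and the explicit constant emerging from those computations is tailored exactly to the quantitative bound on $L$ stated in the hypotheses. Everything else (the ODE solution formula, automatic self-consistency, and the reduction to the scalar problem $N_* = 1/F(N_*)$) is straightforward, so the contraction estimate is the only substantive ingredient.
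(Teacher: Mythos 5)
Your proposal is correct and follows essentially the same route as the paper: reduce to the scalar fixed-point equation $N_* = \bigl(\int_0^\infty e^{-\int_0^s p(N_*,\tau)\d\tau}\d s\bigr)^{-1}$ and show this map is a contraction by bounding its derivative by $L\,p_{\max}^2\bigl(s_*^2/2 + s_*/p_{\min} + 1/p_{\min}^2\bigr) < 1$ using \eqref{p2} and \eqref{p4}. The only (harmless) cosmetic differences are that you work on a compact invariant interval and explicitly note that the self-consistency condition for $N_*$ is automatic from the ODE, which the paper leaves implicit.
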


\begin{proof}
  If there is a stationary solution $n_*$ then (since the first
  equation of \eqref{eq:nofatss} is an ordinary differential equation)
  it must satisfy
  \begin{equation}
    \label{eq:stationarysoln0}
    n_*(s) = n_*(0) e ^{-\int_{0}^{s}p(N_*, \tau) \d \tau}
    = N_* e ^{-\int_{0}^{s}p(N_*, \tau) \d \tau},
    \qquad s \geq 0.
  \end{equation}
  If $n_*$ is a probability, by integrating we see that
  \begin{equation}
    \label{eq:stationarysoln}
    N_* =
    \left(
      \int_{0}^{+\infty} e^{-\int_{0}^{s}p(N_*,\tau) \d \tau} \d s
    \right)^{-1}.
  \end{equation}
  In particular, $N_*$ must be strictly positive. Conversely, if
  $N_* > 0$ is such that \eqref{eq:stationarysoln} is satisfied
  then we may define $n_* = n_*(s)$ by
  \eqref{eq:stationarysoln0} and it is straightforward to check
  that it is a probability, and it is a stationary solution to
  \eqref{eq:nofatigue}. Hence the problem is reduced to showing
  that there exists a unique solution $N_* > 0$ to
  \eqref{eq:stationarysoln}; this is ensured by a simple fixed
  point argument, since
  \begin{multline*}
    \frac{\p}{\p N} \left(\int_{0}^{+\infty}
      e^{-\int_{0}^{s}p(N,\tau) \d \tau} \d s
    \right)^{-1}
    =
    \frac{\int_{0}^{+\infty}
      \big( \int_{0}^{s} \p_{N} p(N,\tau)d \tau\big)
      \big(e^{-\int_{0}^{s}p(N,\tau) \d \tau} \big) \d s}
    {\big(\int_{0}^{+\infty}
      e^{-\int_{0}^{s}p(N,\tau) \d \tau} \d s \big)^{2}}
    \\
    \leq
    L \frac{\int_{0}^{+\infty}
      s e^{-\int_{0}^{s}p(N,\tau) \d \tau}   \d s }{\Big(\int_{0}^{+\infty}e^{-p_{\max} s} \d s\Big)^2} 
    \leq
    L (p_{\max})^{2}\left(\int_{0}^{s_*} s \d s + \int_{s_*}^{+\infty}
      s e^{-p_{\min} (s-s_*)} \d s \right)
    \\
    =
    L  (p_{\max})^{2} \left(\frac{s_* ^2}{2} + \frac{s_*}{p_{\min}} +
      \frac{1}{p_{\min}^ 2} \right)
    < 1.
    \\
  \end{multline*}
  where we have used \eqref{p2} and \eqref{p4}. Note that these
  calculation is rigorous due to \eqref{p1} and the fact that the
  integrals in $s$ converge uniformly for all $N$.
\end{proof}

Similarly to our main results, the condition on $L$ in the above
theorem can be understood as a condition of weak nonlinearity.

\subsection{Asymptotic behaviour}

\label{sec:asymp1}

In this section we prove Theorem \ref{thm:main} for equation
\eqref{eq:nofatigue}. Formally, the proof is based on rewriting
it as
\begin{equation}
  \label{eq:formal}
  \frac{\p}{\p t} n = \mathcal{L}_N (n) = \mathcal{L}_{N_*} (n) + (\mathcal{L}_N (n) - \mathcal{L}_{N_*} (n))
  =: \mathcal{L}_{N_*} (n) + h,
\end{equation}
where we define
\begin{equation*}
  \mathcal{L}_N(n) (t,s) := -\frac{\p}{\p s} n(t,s) - p(N(t), s) n(t,s)
  + \delta_0(s) \int_0^\infty p(N(t), u) n(t,u) \d u ,
\end{equation*} and
\begin{multline}
\label{eq:h}
h(t,s) := \Big [ p(N_*,s) - p(N(t),s) \Big ] n(t,s)
\\
+ \delta_{0}(s) \int_{0}^{+\infty} \Big [ p(N(t),u)-p(N_*,u)\Big ]
n(t,u) \d u.
\end{multline}
We treat the term $h$ as a perturbation. In order to do this
rigorously, notice that $h$ contains a multiple of $\delta_0$, so it
is necessary to use a concept of solution in a space of
measures. Then, since the solutions we are using do not allow us to
write \eqref{eq:formal} rigorously, we need to use a concept of
solution that allows for the same formal computation; this is the
reason why weak solutions were introduced earlier.

Before proving the Theorem \ref{thm:main} for equation
\eqref{eq:nofatigue} we need the following lemma:

\begin{lem}
  \label{lem:h1}
  Assume the conditions in Theorem \ref{thm:main} for equation
  \eqref{eq:nofatigue}. Then $h$, defined by \eqref{eq:h}, satisfies
  \begin{equation} 
    \| h(t) \|_{\mathrm{TV}}
    \leq \tilde{C} \| n(t) -n_*\|_{\mathrm{TV}}
    \qquad \text{for all $t \geq 0$,}
  \end{equation}
  where $\tilde{C} := 2 p_{\max} \frac{ L }{1-  L }$. It also
  satisfies
  \begin{equation*}
    \int_0^\infty h(t,s) \d s = 0     \qquad \text{for all $t \geq 0$.}
  \end{equation*}
\end{lem}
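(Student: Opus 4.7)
The plan is to unpack the two terms of $h$ separately, estimate each in total variation using Lipschitz continuity of $p$ in its first argument, and then convert the resulting bound on $|N(t) - N_*|$ into a total variation bound on $n(t) - n_*$ via the stability estimate in Lemma \ref{lem:N-stability}.

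First I would handle the trivial identity $\int_0^\infty h(t,s)\d s = 0$. Integrating $h(t,\cdot)$ against the constant function $1$, the Dirac term contributes exactly $\int_0^\infty [p(N(t),u)-p(N_*,u)] n(t,u)\d u$, while the first term contributes the negative of the same quantity, so they cancel. This also shows the two terms in $h$ are of the same absolute size in $\mathrm{TV}$.

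Next I would bound $\|h(t)\|_{\mathrm{TV}}$ by the triangle inequality, getting
\begin{equation*}
\|h(t)\|_{\mathrm{TV}}
\leq \int_0^\infty |p(N_*,s)-p(N(t),s)|\, n(t,s)\d s
+ \left|\int_0^\infty [p(N(t),u)-p(N_*,u)]\, n(t,u)\d u\right|.
\end{equation*}
Using the Lipschitz assumption \eqref{p2} on $p$ with constant $L$ and the fact that $n(t)$ is a probability measure (by mass conservation, Lemma \ref{lem:mass-conservation}, and the assumption that $n_0$ is a probability), each of the two terms is at most $L|N(t)-N_*|$, giving $\|h(t)\|_{\mathrm{TV}} \leq 2L|N(t)-N_*|$.

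Finally I would apply Lemma \ref{lem:N-stability} to the pair $(n(t),N(t))$ and $(n_*,N_*)$, which are both fixed points of the implicit equation defining $N$. Since $\|n_*\|_{\mathrm{TV}} = 1$, the lemma yields $|N(t)-N_*| \leq \frac{p_{\max}}{1-L}\|n(t)-n_*\|_{\mathrm{TV}}$, provided $L < 1$, which is guaranteed by the smallness assumption on $L$ stated in Theorem \ref{thm:main}. Combining,
\begin{equation*}
\|h(t)\|_{\mathrm{TV}} \leq 2L \cdot \frac{p_{\max}}{1-L}\|n(t)-n_*\|_{\mathrm{TV}} = \tilde{C}\|n(t)-n_*\|_{\mathrm{TV}},
\end{equation*}
as desired. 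There is essentially no obstacle here; the only subtle point is verifying that $L < 1$ and that $n(t)$, $n_*$ are both probability measures so that the denominator $1-L\|n\|_{\mathrm{TV}}$ in the stability estimate is exactly $1-L$, which is where the explicit constant $\tilde{C} = 2p_{\max}L/(1-L)$ comes from.
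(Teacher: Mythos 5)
Your proposal is correct and follows essentially the same route as the paper's proof: split $h$ by the triangle inequality, bound each of the two terms by $L|N(t)-N_*|$ using the Lipschitz property \eqref{p2} and the fact that $n(t)$ is a probability measure, and then convert $|N(t)-N_*|$ into $\frac{p_{\max}}{1-L}\|n(t)-n_*\|_{\mathrm{TV}}$ via Lemma \ref{lem:N-stability}, with the cancellation argument for $\int_0^\infty h(t,s)\d s=0$ also matching the paper's. No substantive differences.
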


\begin{proof}
  We notice that the stationary solution $n_*$ exists due to Theorem
  \ref{thm:ssfornofatigue}, and the solution $n(t) \equiv n(t,s)$ with
  initial data $n_0$ was obtained in Theorem \ref{thmwellposed}. Call
  $N_*$ the total firing rate corresponding to the stationary solution
  $n_*$. We estimate directly each of the terms in the expression of
  $h$:
  \begin{multline*}
    \| h(t) \|_{\mathrm{TV}}
    \leq  \| (p(N_*,s) - p(N(t),s)) n(t,s) \|_{\mathrm{TV}}
    + \Big  \| \delta_{0} \int_{0}^{+\infty}
    (p(N(t),s)-p(N_*,s))n(t,s) \d s \Big \|_{\mathrm{TV}}
    \\
    \leq
    \|p(N_*,s) - p(N(t),s)\|_{\infty} \| n(t)\|_{\mathrm{TV}}
    +  \Big | \int_{0}^{+\infty} (p(N(t),s)-p(N_*,s)) n(t,s) \d s
    \Big |
    \\
    \leq
    L |N_*-N(t)|
    +  \|p(N_*,s) - p(N(t),s)\|_{\infty} \| n (t)\|_{\mathrm{TV}}
    \\ 
    \leq
    \frac{L p_{\max}  }{1-  L } \| n(t) -n_*\|_{\mathrm{TV}}
    + L |N_*-N(t)|
    \leq
    2 p_{\max}\frac{L   }{1-  L } \| n(t) -n_*\|_{\mathrm{TV}},
  \end{multline*}
  where the last inequality is due to Lemma \ref{lem:N-stability} and the fact that
  $\|n_* \|_{\mathrm{TV}} = \|n(t) \|_{\mathrm{TV}}=1$, which imply
  \begin{equation*}
    |N_* - N(t)| \leq \frac{p_{\max} }{1-  L } \| n(t) -n_*\|_{\mathrm{TV}}.
  \end{equation*}
  Regarding the integral of $h$ in $s$ we have
  \begin{align*}
    \int_{0}^{+\infty}h(t,s)\d s
    &= \int_{0}^{+\infty} [p(N_*,s) - p(N(t),s)]n(t,s) \d s
    \\&+ \int_{0}^{+\infty} \delta_{0}(x)
        \int_{0}^{+\infty} [p(N(t),s)-p(N_*,s)] n(t,s)\d s \d x
    \\
    &= \int_{0}^{+\infty} [p(N_*,s) - p(N(t),s)]n(t,s) \d s +
      \int_{0}^{+\infty} [p(N(t),s)-p(N_*,s)] n(t,s)\d s
    \\&= 0,
  \end{align*}
  which gives the result.
\end{proof} 

\begin{proof}[Proof of Theorem \ref{thm:main} for
  eq. \eqref{eq:nofatigue}]
  
  Call $N_*$ the value of the total firing rate at equilibrium. The
  solution $n$ to equation \eqref{eq:nofatigue} is in particular a
  weak solution (see Theorem \ref{thm:mild-weak}). Then one sees it is
  also a weak solution (in the sense of \cite{Ball1977}) to the
  equation
  \begin{equation*}
    \ddt n (t,\cdot) = \mathcal{L}_{N_*} n(t,\cdot) + h(t,\cdot),
  \end{equation*}
  where $\mathcal{L}_{N_*}$ is the linear operator corresponding to $p = p(N_*, s)$
  for $N_*$ fixed,
  \begin{align*}
    \mathcal{L}_{N_*} n(t,s)
    := -\frac{\p}{\p s} n(t,s) - p(N_*,s)n(t,s)
    + \delta_{0} \int_{0}^{+\infty} p(N_*,u)n(t,u) \d u.
  \end{align*}
  Then by \cite{Ball1977} we may use Duhamel's formula and write the
  solution as
  \begin{equation}
    n(t,s) = S_t n_0(s) +
    \int_{0}^{t}S_{t-\tau} h(\tau, s) \d \tau,
  \end{equation}
  where $S_t$ is the linear semigroup defined in Section
  \ref{sec:linear}. We subtract the stationary solution from both
  sides;
  \begin{align*}
	n(t,s) - n_*(s)  = S_t n_0(s) - n_*(s)  +  \int_{0}^{t}S_{t-\tau} h(\tau, s) \d \tau.
	\end{align*}
	Then we take the $\mathrm{TV}$ norm;
	\begin{align} \label{eq:duhamelnofatigue}
	\|n(t) - n_* \|_{\mathrm{TV}}  \leq \|  S_t n_0 - n_* \|_{\mathrm{TV}}  +  \Big \| \int_{0}^{t}S_{t-\tau} h(\tau, s) \d \tau \Big \|_{\mathrm{TV}}. 
	\end{align}By using Lemma \ref{lem:h1} and Proposition \ref{prop2}, Equation \eqref{eq:duhamelnofatigue} becomes:
	\begin{align*}
	\|n(t) - n_* \|_{\mathrm{TV}}  &\leq \|  S_t (n_0 - n_*) \|_{\mathrm{TV}}   + \int_{0}^{t} \|  S_{t-\tau} h(\tau, s) \|_{\mathrm{TV}}  \d \tau \\  &\leq C e^{-\lambda t}\| n_0 - n_* \|_{\mathrm{TV}}  +    \tilde{C} \int_{0}^{t}e^{-\lambda (t-\tau)}  \| n(\tau) -n_* \|_{\mathrm{TV}} \d \tau. 
	\end{align*}
Therefore, by Gronwall's inequality we obtain
	\begin{equation*}
          \|n(t) - n_* \|_{\mathrm{TV}}
          \leq
          C  e^{ -(\lambda -\tilde{C})t}  \| n_0 - n_*
          \|_{\mathrm{TV}}.
          \qedhere
	\end{equation*}
\end{proof}

\section{A structured neuron population model with fatigue}

\label{sec:fatigue}

We now consider Equation \eqref{eq:fatnonlin} for a structured neuron
population model. We follow the same order as in Section
\ref{sec:nofatigue}.

\subsection{Well-posedness}

We refer the reader to Section \ref{sec:well-posedness} for
preliminary notation and useful results. We define mild measure
solutions in a similar way. Still denoting by $(T_t)_{t \geq 0}$ the
translation semigroup generated on
$(\mathcal{M}, \|\cdot\|_{\mathrm{BL}})$ by the operator
$-\frac{\p}{\p s}$, we rewrite \eqref{eq:fatnonlin} as
\begin{equation*}
  \frac{\p}{\p t} n(t,s) - \mathcal{L} n(t,s) = A[n](t,s),
\end{equation*}
where
\begin{multline} \label{defnA}
  \mathcal{L} = -\frac{\p}{\p s}
  \quad \text{and}
  \quad A[n](t,s) : = -p(N(t),s)n(t,s)
  + \int_{0}^{+\infty} \kappa (s,u)p(N(t),u)n(t,u) \d u.
\end{multline}

\begin{dfn}
	\label{defnmeasuresolnfat}
	Assume that $p$ satisfies \eqref{p1},\eqref{p2} and $\kappa$ satisfies \eqref{k5}.  A couple of
	functions $n \in \mathcal{C}([0, T), \mathcal{M}_+(\R^+_0))$
	and $N \in \mathcal{C}([0,T), [0,+\infty))$, defined on an interval
	$[0, T)$ for some $T \in (0,+\infty)$, is called a \emph{mild
		measure solution} to \eqref{eq:fatnonlin} with initial data
	$n_0(s) \in \mathcal{M}(\R^+_0)$, $n(0) = n_0$ if it
	satisfies 
	\begin{equation}
	\label{soln2}
	n(t,s) = T_t n_0(s)
	+ \int_0^t T_{t-\tau} A[n(\tau, \cdot)] (s)d\tau, \quad \text{for all }t \in [0,T),
	\end{equation} where $A[n](t,s)$ is defined as in \eqref{defnA} and \begin{equation*}
	N(t) = \int_{0}^{+\infty} p(N(t),s)n(t,s)ds, \quad t \in [0,T).
	\end{equation*}
\end{dfn}
By integrating in $\R^+_0$, Definition \ref{defnmeasuresolnfat} directly
implies mass conservation. Therefore Lemma \ref{lem:mass-conservation} holds true for this equation as well. Moreover we have the Lemma \ref{lem:N-stability} satisfied with the same constants.

\begin{thm}[Well-posedness of \eqref{eq:fatnonlin} in measures]
	\label{thmwellposedfat}
	Assume that $p$ satisfies
	\eqref{p1}, \eqref{p2} and the Lipschitz
	constant $L$ in \eqref{p2} satisfies
	$L \leq 1/(4 \|n_0\|_{\mathrm{TV}})$. Assume also \eqref{k5} for $\kappa$. For any given initial data
	$n_0 \in \mathcal{M}(\mathbb{R}^+)$ there exists a unique measure
	solution $n \in \mathcal{C}([0,+\infty); \mathcal{M}(\R^+_0))$ of
	\eqref{eq:fatnonlin} in the sense of Definition
	\ref{defnmeasuresolnfat}. In addition, if $n_1$, $n_2$ are any two mild
	measure solutions to \eqref{eq:nofatigue} (with possibly different
	initial data) defined on any interval $[0,T)$ then
	\begin{equation}
	\label{eq:t-stability2}
	\| n_1(t) - n_2(t) \|_{\mathrm{TV}}
	\leq
	\| n_1(0) - n_2(0) \|_{\mathrm{TV}}
	\,
	e^{ 4 \|p\|_{\infty} t }
	\qquad
	\text{for all $t \in [0,T)$.}
	\end{equation}
\end{thm}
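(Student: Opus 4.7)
The strategy is to mimic the contraction-mapping argument used in the proof of Theorem \ref{thmwellposed}, replacing the boundary source term $N(\tau)\delta_0$ by the gain integral $\int_0^\infty \kappa(\cdot,u)\,p(N(\tau),u)\,n(\tau,u)\,du$ appearing in the definition \eqref{defnA} of $A[n]$. The crucial observation that makes all the constants come out the same as in the renewal case is the modelling assumption \eqref{k5}: since $\kappa(\cdot,u)$ is a probability measure for each $u$, Tonelli's theorem yields
\[
\Bigl\| \int_0^\infty \kappa(\cdot,u)\,p(N,u)\,n(u)\,du \Bigr\|_{\mathrm{TV}} \leq \|p\|_\infty \|n\|_{\mathrm{TV}},
\]
which is the same bound that the Dirac source $N\delta_0$ enjoyed in the first model.

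Fix $T,C>0$ to be chosen and consider the complete metric space
\[
\mathcal{X} := \{ n \in \mathcal{C}([0,T], \mathcal{M}_+(\R_0^+)) \mid n(0)=n_0,\ \|n(t)\|_{\mathrm{TV}}\leq C \text{ for all } t \in [0,T]\}
\]
endowed with $\|n\|_{\mathcal{X}} := \sup_{t\in[0,T]} \|n(t)\|_{\mathrm{TV}}$, and define
\[
\Psi[n](t) := T_t n_0 + \int_0^t T_{t-\tau} A[n(\tau,\cdot)]\, d\tau,
\]
with $N(t)$ determined implicitly from $n(t)$ by Lemma \ref{lem:N-stability} under the smallness assumption $L\leq 1/(4\|n_0\|_{\mathrm{TV}})$. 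Bochner-integrability of the integrand is clear since $n(\tau)$ is continuous in the bounded-Lipschitz topology and $N(\tau)$ is continuous by Lemma \ref{lem:N-stability}; positivity of $\Psi[n]$ follows by rewriting the Duhamel identity in integrating-factor form, using $\kappa\geq 0$. Combining the TV bound on the gain with the trivial bound $\|p(N(\tau),\cdot)n(\tau)\|_{\mathrm{TV}}\leq \|p\|_\infty C$ gives $\|A[n(\tau)]\|_{\mathrm{TV}}\leq 2\|p\|_\infty C$, so $\|\Psi[n](t)\|_{\mathrm{TV}}\leq \|n_0\|_{\mathrm{TV}}+2TC\|p\|_\infty$, and choosing $C:=2\|n_0\|_{\mathrm{TV}}$ and $T\leq 1/(4\|p\|_\infty)$ makes $\Psi:\mathcal{X}\to\mathcal{X}$.

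For contractivity, I would split $A[n_1]-A[n_2]$ into four pieces (two from the loss term, two from the gain) and estimate each in TV norm using Lemma \ref{lem:N-stability} and the probability-measure property of $\kappa$; the result is
\[
\|A[n_1](\tau)-A[n_2](\tau)\|_{\mathrm{TV}} \leq 2L\|n_1\|_{\mathrm{TV}}|N_1(\tau)-N_2(\tau)| + 2\|p\|_\infty \|n_1(\tau)-n_2(\tau)\|_{\mathrm{TV}},
\]
which, combined with $|N_1-N_2|\leq 2\|p\|_\infty\|n_1-n_2\|_{\mathrm{TV}}$ (valid when $L\|n_1\|_{\mathrm{TV}}\leq 1/2$), yields $\|\Psi[n_1]-\Psi[n_2]\|_{\mathcal{X}}\leq 4T\|p\|_\infty \|n_1-n_2\|_{\mathcal{X}}$. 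Taking $T\leq 1/(8\|p\|_\infty)$ makes $\Psi$ a strict contraction and Banach's fixed-point theorem supplies a unique mild solution on $[0,T]$. Integrating \eqref{soln2} against $\varphi\equiv 1$ and using $\int_0^\infty \kappa(s,u)\,ds=1$ gives mass conservation $\|n(T)\|_{\mathrm{TV}}=\|n_0\|_{\mathrm{TV}}$, so the construction can be iterated on $[T,2T]$, $[2T,3T]$, $\ldots$ to produce a global solution.

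The stability estimate \eqref{eq:t-stability2} is then obtained by subtracting the two Duhamel formulas for $n_1,n_2$, bounding the difference term-by-term with the same four estimates used for contractivity, and applying Gronwall's inequality, which produces the claimed $e^{4\|p\|_\infty t}$ factor. I do not expect any essentially new obstacle beyond what already appears in the proof of Theorem \ref{thmwellposed}: the only point requiring care is to check that the kernel contribution to $A[n]$ satisfies the same $\|p\|_\infty$-type TV estimates as the Dirac source did in the no-fatigue case, which is precisely the content of hypothesis \eqref{k5}.
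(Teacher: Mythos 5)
Your proposal is correct and follows essentially the same route as the paper's proof: the same fixed-point space, the same Duhamel operator, the same observation that hypothesis \eqref{k5} lets the gain term $\int_0^\infty \kappa(\cdot,u)p(N,u)n(u)\,du$ be bounded in total variation by $\|p\|_\infty\|n\|_{\mathrm{TV}}$ (so that the two kernel terms are dominated by the two loss-type terms), the same choices of $C$ and $T$, and the same Gronwall argument for \eqref{eq:t-stability2}. The only cosmetic difference is that you take $T\leq 1/(8\|p\|_\infty)$ for strict contractivity where the paper states $T\leq 1/(4\|p\|_\infty)$; your choice is in fact the safer one.
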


\begin{proof}[Proof of Theorem \ref{thmwellposedfat}]
	Let us first show existence of a solution for a nonnegative initial
	measure $n_0 \in \mathcal{M}_+(\R^+_0)$. If $n_0 = 0$ it is clear that
	setting $n(t)$ equal to the zero measure on $\R^+_0$ for all $t$
	defines a solution, so we assume $n_0 \neq 0$. Fix $C, T > 0$, to be
	chosen later. Consider the complete metric space
	\begin{equation*}
	\mathcal{Y}=
	\{ n \in \mathcal{C}([0,T], \mathcal{M}_+(\R^+_0)) \mid
	n(0) = n_0,
	\
	\|n(t) \|_{\mathrm{TV}} \leq C \text{ for all $t \in [0,T]$} \},
	\end{equation*}
	endowed with the norm
	\begin{equation*}
	\|n\|_{\mathcal{Y}} := \sup_{t \in [0,T]} \|n(t)\|_{\mathrm{TV}}.
	\end{equation*}
	We remark that $\mathcal{C}([0,T], \mathcal{M}(\R^+_0))$ refers to
	functions which are continuous in the bounded Lipschitz topology,
	\emph{not} in the total variation one. Define an operator
	$\Gamma : \mathcal{Y} \to \mathcal{Y}$ by
	\begin{align}
	\label{bp}
	\Gamma[n](t):= T_t n_0
	+ \int_0^t T_{t-\tau} (A[n](\tau,,))(s) d\tau
	\end{align}
	for all $n \in \mathcal{Y}$. \\
	
	The definition of $\Gamma[n]$ indeed makes sense, since
	$\tau \mapsto T_{t-\tau} (A[n](\tau,\cdot))$ is a
	continuous function from $[0,T]$ to $\mathcal{M}(\R^+_0)$, hence
	integrable.
	
	We first check that $\Gamma[n]$ is indeed in $\mathcal{Y}$. It is easy
	to see that $t \mapsto \Gamma[n](t)$ is continuous in the bounded
	Lipschitz topology, and it is a nonnegative measure for each
	$t \in [0,T]$. We also have
	\begin{multline*}
	\|\Gamma[n](t)\|_{\mathrm{TV}}
	\leq
	\|n_0\|_{\mathrm{TV}}
	+
	\int_0^t \| p (N(\tau), \cdot)
	n(\tau,\cdot) \|_{\mathrm{TV}} \d \tau 
	\\ + \int_0^t  \bigg \| \int_{0}^{+\infty}  \kappa (.,u) p(N(\tau),u) n(\tau, u) \d u \bigg \|_{\mathrm{TV}} \d \tau
	\\
	\leq
	\|n_0\|_{\mathrm{TV}}
	+
	T C \|p\|_{\infty} 
	\\ + \int_0^t  \int_{0}^{+\infty}  \bigg | \Big( \int_{0}^{+\infty}   \kappa (s,u) \d s \Big) p(N(\tau),u) n(\tau, u) \bigg |   \d u   \d \tau \\ 
	\leq
	\|n_0\|_{\mathrm{TV}}
	+ T C \| p \|_{\infty} + \int_0^t \| p (N(\tau), \cdot)
	n(\tau,\cdot) \|_{\mathrm{TV}} \d \tau  \\ \leq  \|n_0\|_{\mathrm{TV}} +  2 T C \|p\|_{\infty} 
	\end{multline*} Here we used the assumption \eqref{k5} on $\kappa$. We choose
	\begin{equation}
	\label{eq:rt1fat}
	T \leq \frac{1}{4 \|p\|_{\infty}}, \quad \text{and} \quad 
	C := 2 \|n_0\|_{\mathrm{TV}},
	\end{equation}
	so that
	\begin{equation*}
	\|n_0\|_{\mathrm{TV}}
	+
	2 T C \| p \|_{\infty}
	\leq
	\|n_0\|_{\mathrm{TV}}
	+
	\frac{C}{2}
	\leq
	C.
	\end{equation*}
	Hence with these conditions on $T$ and $C$ we have
	$\Gamma[n] \in \mathcal{Y}$.
	
	Let us show that $\Gamma$ is a contraction mapping. Take
        $n_1, n_2 \in \mathcal{Y}$ and let $N_1, N_2$ be defined by
        \eqref{eq:N-from-n} corresponding to $n_1$ and $n_2$,
        respectively. We have
	\begin{multline*}
          \| \Gamma[n_1](t) - \Gamma[n_2](t) \|_{\mathrm{TV}}
          \leq
          \int_0^t \| (p (N_1(\tau), \cdot) - p(N_2(\tau),\cdot))
          n_1(\tau,\cdot)  \|_{\mathrm{TV}} \d \tau
          \\
          + \int_0^t
          \| p (N_2(\tau), \cdot)
          (n_1(\tau, \cdot) - n_2(\tau, \cdot)) \|_{\mathrm{TV}} \d \tau
          \\ 
          + \int_0^t
          \bigg \|
          \int_{0}^{+\infty} \kappa (\cdot,u)
          (p (N_1(\tau), u) - p(N_2(\tau), u))
          n_1(\tau, u) \d u
          \bigg \|_{\mathrm{TV}}  \d \tau 
          \\
          + \int_0^t \bigg \|
          \int_{0}^{+\infty}
          \kappa (\cdot,u) p (N_2(\tau), u)
          (n_1(\tau,u) - n_2(\tau,u)) \d u \bigg  \|_{\mathrm{TV}}
          \d \tau
          \\ 
          =: T_1 + T_2 + T_3 + T_4.
	\end{multline*}
	We bound each term separately. We can bound $T_3$ and $T_4$ in the following way;
	\begin{align*}
          T_3
          &\leq \int_0^t  \int_{0}^{+\infty}
            \bigg |
            \left(
            \int_{0}^{+\infty}
            \kappa (s,u) \d s
            \right)
            (p (N_1(\tau), u) - p(N_2(\tau), u))
            n_1(\tau, u) \bigg | \d u  \d \tau = T_1,
          \\
          T_4
          &\leq \int_0^t  \int_{0}^{+\infty}
            \bigg |
            \left(
            \int_{0}^{+\infty}
            \kappa (s,u) \d s
            \right)
            p (N_2(\tau), \cdot) (n_1(\tau) - n_2(\tau))  \bigg |
            \d u \d \tau = T_2
	\end{align*}
        For $T_1$, since $L \leq 1 / ({4 \|n_0\|_{\mathrm{TV}}})$,
        using Lemma~\ref{lem:N-stability} as previously calculated we
        have
	\begin{equation}
	\label{eq:b1fat}
	T_1 \leq
	T \|p\|_{\infty} \| n_1 - n_2 \|_{\mathcal{Y}},
	\end{equation}
	and for $T_2$,
	\begin{equation}
	\label{eq:b2fat}
	T_2 \leq T \|p\|_{\infty} \|n_1 - n_2\|_{\mathcal{Y}}.
	\end{equation}
	Putting equations \eqref{eq:b1fat}--\eqref{eq:b2fat} together and taking
	the supremum over $0 \leq t \leq T$,
	\begin{equation*}
	\| \Gamma[n_1] - \Gamma[n_2] \|_{\mathcal{Y}}
	\leq
	4 T \|p\|_{\infty} \|n_1 - n_2\|_{\mathcal{Y}}.
	\end{equation*}
	Taking now $T \leq 1 / (4 \|p\|_{\infty})$ ensures that $\Gamma$ is
	contractive, so it has a unique fixed point in $\mathcal{Y}$, which
	is a mild measure solution on $[0,T]$. If we call $n$ this fixed
	point, since $\|n(T)\|_{\mathrm{TV}} = \|n_0\|_{\mathrm{TV}}$ by
	mass conservation (see Lemma \ref{lem:mass-conservation}), we may
	repeat this argument to continue the solution on $[T, 2T]$, $[2T,
	3T]$, showing that there is a solution defined on $[0,+\infty)$.
	
	In order to show stability of solutions with respect to the initial
	data (which implies uniqueness of solutions), take two measures
	$n_0^1, n_0^2 \in \mathcal{M}_+(\R^+_0)$, and consider two solutions
	$n_1$, $n_2$ with initial data $n_0^1$, $n_0^2$ respectively. We
	have
	\begin{multline*}
	\| n_1(t) - n_2(t) \|_{\mathrm{TV}}
	\leq
	\| n_0^1 - n_0^2\|_{\mathrm{TV}}
	+
	\int_0^t \| (p (N_1(\tau), \cdot) - p(N_2(\tau),\cdot))
	n_1(\tau,\cdot)  \|_{\mathrm{TV}} \d \tau
	\\
	+ \int_0^t
	\| p (N_2(\tau), \cdot) (n_1(\tau) - n_2(\tau)) \|_{\mathrm{TV}} \d \tau \\ 
	+ \int_0^t \bigg \| \int_{0}^{+\infty} \kappa (s,u) (p (N_1(\tau), \cdot) - p(N_2(\tau),\cdot))
	n_1(\tau,\cdot) \d u  \bigg \|_{\mathrm{TV}}  \d \tau 
	\\ + \int_0^t \bigg \| \int_{0}^{+\infty} \kappa (s,u) p (N_2(\tau), \cdot) (n_1(\tau) - n_2(\tau)) \d u \bigg  \|_{\mathrm{TV}} \d \tau,
	\end{multline*}
	and with very similar arguments as before we obtain that
	\begin{equation*}
	\| n_1(t) - n_2(t) \|_{\mathrm{TV}}
	\leq \| n_0^1 - n_0^2\|_{\mathrm{TV}}
	+ 4 \|p\|_{\infty}
	\int_0^t \|n_1(\tau) - n_2(\tau) \|_{\mathrm{TV}} \d \tau.
	\end{equation*}
	Gronwall's inequality then implies \eqref{eq:t-stability2}.  
\end{proof}

\subsection{The linear equation}

\label{sec:linear2}

The linear version of equation
\eqref{eq:fatnonlin} obtained when $p = p(N,s)$ does not depend on $N$:
\begin{align} \label{eq:fatlin}
\begin{cases}
\begin{split}
\frac{\p}{\p t} n(t,s) + \frac{\p}{\p s} n(t,s) &+ p(s)n(t,s) = \int_{0}^{+\infty} \kappa(s,u)p(u)n(t,u) \d u, \quad u,s,t > 0, \\
n(t, s=0) &= 0 , \quad N= \int_{0}^{+\infty} p(s)n(t,s) \d s, \quad t >0, \\ 
n(t=0,s) &= n_0(s), \quad s \geq 0.
\end{split}
\end{cases}
\end{align}

\subsubsection{Well-posedness}

\label{sec:linear-well-posed2}

Similarly to Section \ref{sec:linear-well-posed}, we can generalise slightly
our concept of solution to include measures which are not necessarily
nonnegative:
\begin{dfn}
	\label{defnmeasuresolutions-linear2}
	Assume that $p \: [0,+\infty) \to [0,+\infty)$ is a bounded, nonnegative function satisfying \eqref{p2} and $\kappa$ satisfies \eqref{k5}.  A couple of
	functions $n \in \mathcal{C}([0, T), \mathcal{M}_+(\R^+_0))$
	and $N \in \mathcal{C}([0,T), [0,+\infty))$, defined on an interval
	$[0, T)$ for some $T \in (0,+\infty]$, are called a \emph{mild
		measure solution} to \eqref{eq:fatnonlin} with initial data
	$n_0(s) \in \mathcal{M}(\R^+_0)$ if it
	satisfies $n(0) = n_0$
	\begin{equation}
          \label{eq:soln-linear-fatigue}
          n(t,s) = T_t n_0(s)
          + \int_0^t T_{t-\tau} A[n(\tau, \cdot)] (s) \d \tau 
	\end{equation}
	for all $t \in [0,T)$ where 
	\begin{equation*}
          A[n](t,s) : = -p(s)n(t,s)
          + \int_{0}^{+\infty} \kappa (s,u)p(u)n(t,u) \d u
	\end{equation*} and \begin{equation*}
	N(t) = \int_{0}^{+\infty} p(s)n(t,s) \d s, \quad t \in [0,T).
	\end{equation*}
\end{dfn}
By the existence result for \eqref{eq:fatnonlin} we have:

 \begin{thm}[Well-posedness of \eqref{eq:fatlin} in measures]
 	\label{thmwellposedfat-linear}
 	Assume that $p \: [0,+\infty) \to [0,+\infty)$ is a bounded,
        nonnegative, Lipschitz function. Assume also that $\kappa$
        satisfies \eqref{k5}. For any given initial data
        $n_0 \in \mathcal{M}(\mathbb{R}^+)$ there exists a unique
        measure solution
        $n \in \mathcal{C}([0,+\infty); \mathcal{M}(\R^+_0))$ of the
        linear equation \eqref{eq:fatlin} in the sense of Definition
        \ref{defnmeasuresolutions-linear2}. In addition, if $n$ is a
        mild measure solution to \eqref{eq:fatlin} defined on any
        interval $[0,T)$ then
 	\begin{equation}
 	\label{eq:t-stability2lin}
 	\| n(t) \|_{\mathrm{TV}}
 	\leq
 	\| n(0) \|_{\mathrm{TV}},
 	\qquad
 	\text{for all $t \in [0,T)$.}
 	\end{equation}
\end{thm}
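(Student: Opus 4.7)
The plan is to mimic the proof of Theorem \ref{thm:wellposed-linear} almost verbatim, reducing the signed-measure linear problem to the nonnegative existence result of Theorem \ref{thmwellposedfat}. Note that when $p$ does not depend on $N$, the Lipschitz constant $L$ from \eqref{p2} can be taken to be $0$, so the smallness hypothesis $L \leq 1/(4\|n_0\|_{\mathrm{TV}})$ in Theorem \ref{thmwellposedfat} is trivially satisfied for any initial datum; this is what makes the reduction possible without any size restriction on $n_0$.

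For existence, given $n_0 \in \mathcal{M}(\R^+_0)$ I would take its Hahn--Jordan decomposition $n_0 = n_0^+ - n_0^-$ with $n_0^\pm \in \mathcal{M}_+(\R^+_0)$, apply Theorem \ref{thmwellposedfat} to obtain mild measure solutions $n^+$ and $n^-$ with initial data $n_0^+$ and $n_0^-$ respectively, and set $n := n^+ - n^-$. Since \eqref{eq:soln-linear-fatigue} is linear in $n$ (the operator $A$ from \eqref{defnA} is linear in $n$ when $p$ is independent of $N$), $n$ is a mild measure solution to \eqref{eq:fatlin} with initial datum $n_0$.

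For uniqueness I would repeat the stability computation used at the end of the proof of Theorem \ref{thmwellposedfat}: applied to any mild measure solution $n$ of \eqref{eq:fatlin} with initial datum $n_0$, and using \eqref{k5} to bound the gain term by $\|p(\cdot)n(\tau,\cdot)\|_{\mathrm{TV}}$, one obtains
\begin{equation*}
  \|n(t)\|_{\mathrm{TV}} \leq \|n_0\|_{\mathrm{TV}} + 2\|p\|_\infty \int_0^t \|n(\tau)\|_{\mathrm{TV}} \d\tau,
\end{equation*}
so Gr\"onwall gives $\|n(t)\|_{\mathrm{TV}} \leq \|n_0\|_{\mathrm{TV}} e^{2\|p\|_\infty t}$, which combined with linearity yields uniqueness.

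It remains to upgrade this exponentially-growing bound to the sharp estimate \eqref{eq:t-stability2lin}. For this I would rely on mass conservation, which holds for \eqref{eq:fatlin} because assumption \eqref{k5} ensures $\int_0^\infty \kappa(s,u)\d s = 1$, so that the gain term and loss term in $A[n]$ have the same integral (the boundary term at $s=0$ is absent here, unlike in \eqref{eq:nofatigue}). Consequently Lemma \ref{lem:mass-conservation} applies to both nonnegative solutions $n^\pm$, giving $\|n^\pm(t)\|_{\mathrm{TV}} = \|n_0^\pm\|_{\mathrm{TV}}$. By uniqueness one has $n(t) = n^+(t) - n^-(t)$, and the triangle inequality together with $\|n_0\|_{\mathrm{TV}} = \|n_0^+\|_{\mathrm{TV}} + \|n_0^-\|_{\mathrm{TV}}$ then yields
\begin{equation*}
  \|n(t)\|_{\mathrm{TV}} \leq \|n^+(t)\|_{\mathrm{TV}} + \|n^-(t)\|_{\mathrm{TV}} = \|n_0^+\|_{\mathrm{TV}} + \|n_0^-\|_{\mathrm{TV}} = \|n_0\|_{\mathrm{TV}}.
\end{equation*}
No step is a real obstacle; the only thing to verify carefully is that mass conservation really does hold, i.e.\ that the probability-measure hypothesis \eqref{k5} makes the $\kappa$-gain exactly cancel the $p$-loss, which is a direct computation from \eqref{eq:soln-linear-fatigue} by integrating against the constant test function $1$ (or truncating and passing to the limit to handle the fact that $1 \notin \mathcal{C}_0$).
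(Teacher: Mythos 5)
Your proposal is correct and follows essentially the same route the paper intends: the paper omits this proof, stating only that one should ``follow the same procedure as in the proof of Theorem \ref{thm:wellposed-linear}'', and that procedure is exactly your Hahn--Jordan splitting $n_0 = n_0^+ - n_0^-$, application of the nonlinear existence theorem (here Theorem \ref{thmwellposedfat}, with the smallness condition on $L$ vacuous since $L=0$), a Gr\"onwall bound plus linearity for uniqueness, and mass conservation of the nonnegative pieces to obtain the sharp estimate \eqref{eq:t-stability2lin}. Your observation that \eqref{k5} makes the $\kappa$-gain exactly cancel the $p$-loss upon integration is precisely the paper's justification that Lemma \ref{lem:mass-conservation} carries over to this equation.
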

 
For the proof of this result one can follow the same procedure as in
the proof of Theorem \ref{thm:wellposed-linear}, so we omit it here.
Theorem \ref{thmwellposedfat-linear} allows us to define a
$C_0$-semigroup $(S_t)_{t\geq 0}$ on $\mathcal{M}$, such that
$S_t(n_0) := n(t)$ for any $n_0 \in \mathcal{M}$ where $n(t)$ is the
mild solution to \eqref{eq:fatlin} similarly as in Section
\ref{sec:linear-well-posed}.

Given $p$, we define $\mathcal{L}$ as the generator of the
corresponding semigroup $S_t$, defined on its domain
$\mathcal{D}(\mathcal{L})$. One can of course see that for
sufficiently regular measures $n$,
\begin{equation}
  \label{eq:generator-explicit}
  \mathcal{L} n(s)
  = \frac{\p}{\p s} n(s) + p(s)n(s) -\int_{0}^{+\infty} \kappa (s,u) p(u)n(u) \d u.
\end{equation}
Since the only unbounded operator involved in this expression is $\frac{\p}{\p s} n$, one sees that the domain $\mathcal{D}(\mathcal{L})$ can be
described explicitly as
\begin{equation*}
  \mathcal{D}(\mathcal{L}) := \Big \{n \in \mathcal{M}(\R^+_0)
  \mid 
  \frac{\p}{\p s} n \in \mathcal{M}(\R^+_0)
  \Big \},
\end{equation*}
where the derivative is taken in the sense of distributions on
$\R$. Expression \eqref{eq:generator-explicit} is valid for all
$n \in \mathcal{D}(\mathcal{L})$, again understanding the derivative
in distributional sense.

Finally, for the arguments regarding the nonlinear equation
\eqref{eq:fatnonlin} we will need a result on continuous dependence of
the solutions of the linear equation \eqref{eq:fatlin} on the firing
rate $p$:

\begin{thm}[Continuous dependence with respect to $p$ for the linear equation]
  Let $p_1, p_2 $ be bounded, nonnegative, Lipschitz functions. Assume
  also that $\kappa$ satisfies \eqref{k5}. For any given initial data
  $n_0 \in \mathcal{M}(\mathbb{R}^+)$ consider $n_1, n_2$ the two
  solutions to the linear equation \eqref{eq:fatlin} on $[0,+\infty)$
  with firing rate $p_1$, $p_2$ respectively and initial data
  $n_0$. Assuming $\|p_1\|_\infty \neq 0$, it holds that
  \begin{equation}
    \label{eq:p-stability}
    \| n_1(t) - n_2(t) \|_{\mathrm{TV}}
    \leq
    \frac{\| n_0 \|_{\mathrm{TV}} \|p_1 - p_2 \|_{\infty}}
    {\|p_1\|_\infty} \big(
    e^{2 \|p_1\|_\infty t} - 1
    \big)
    \qquad
    \text{for all $t \geq 0$.}
  \end{equation}
\end{thm}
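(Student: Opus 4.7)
The plan is a direct Duhamel-plus-Gronwall argument, parallel to the stability estimates proved earlier in the section. First I would write both solutions in mild form (Definition~\ref{defnmeasuresolutions-linear2}),
\begin{equation*}
n_i(t,s) = T_t n_0(s) + \int_0^t T_{t-\tau} A_i[n_i(\tau,\cdot)](s) \d\tau, \qquad i=1,2,
\end{equation*}
with $A_i[n](s) := -p_i(s)n(s) + \int_0^\infty \kappa(s,u) p_i(u) n(u)\d u$, subtract, and take total-variation norms to obtain
\begin{equation*}
\|n_1(t) - n_2(t)\|_{\mathrm{TV}} \leq \int_0^t \|A_1[n_1(\tau)] - A_2[n_2(\tau)]\|_{\mathrm{TV}} \d\tau,
\end{equation*}
using that $T_{t-\tau}$ is a contraction (indeed, an isometry on the nonnegative cone) in total variation.

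The key algebraic step is the splitting $p_1 n_1 - p_2 n_2 = (p_1 - p_2) n_2 + p_1 (n_1 - n_2)$, chosen so that the Gronwall constant will be $\|p_1\|_\infty$ rather than $\|p_2\|_\infty$ (the opposite splitting would produce the latter). Applying this to the multiplicative term and to the $\kappa$-integral term, and using assumption~\eqref{k5}, which gives $\bigl\|\int_0^\infty \kappa(\cdot,u) f(u) \d u\bigr\|_{\mathrm{TV}} \leq \|f\|_{\mathrm{TV}}$ by Fubini, I get
\begin{equation*}
\|A_1[n_1(\tau)] - A_2[n_2(\tau)]\|_{\mathrm{TV}} \leq 2\|p_1 - p_2\|_\infty \|n_2(\tau)\|_{\mathrm{TV}} + 2\|p_1\|_\infty \|n_1(\tau) - n_2(\tau)\|_{\mathrm{TV}}.
\end{equation*}
Invoking the uniform TV bound $\|n_2(\tau)\|_{\mathrm{TV}} \leq \|n_0\|_{\mathrm{TV}}$ from Theorem~\ref{thmwellposedfat-linear} and integrating then yields the integral inequality
\begin{equation*}
\|n_1(t) - n_2(t)\|_{\mathrm{TV}} \leq 2\|p_1 - p_2\|_\infty \|n_0\|_{\mathrm{TV}}\, t + 2\|p_1\|_\infty \int_0^t \|n_1(\tau) - n_2(\tau)\|_{\mathrm{TV}} \d\tau.
\end{equation*}

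To conclude, I would apply the sharper Gronwall variant: if $u(t) \leq a t + b \int_0^t u(\tau)\d\tau$ with $a, b > 0$, then $u(t) \leq \frac{a}{b}(e^{bt}-1)$. This is proved by setting $G(t) := \int_0^t u(\tau)\d\tau$, so that $G'(t) \leq at + bG(t)$; multiplying by the integrating factor $e^{-bt}$ and integrating gives $G(t) \leq \frac{a}{b^2}(e^{bt} - 1 - bt)$, and substituting back into $u(t) \leq at + bG(t)$ produces the claimed $\frac{a}{b}(e^{bt}-1)$. With $a = 2\|p_1 - p_2\|_\infty \|n_0\|_{\mathrm{TV}}$ and $b = 2\|p_1\|_\infty$ this reproduces \eqref{eq:p-stability} exactly. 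There is no genuine obstacle in this argument; the only things requiring care are picking the splitting that singles out $\|p_1\|_\infty$ and using the refined Gronwall form (rather than the cruder $u(t) \leq a t\, e^{bt}$), which is precisely what produces the $\|p_1\|_\infty^{-1}$ prefactor in the stated bound.
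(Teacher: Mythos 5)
Your proposal is correct and follows essentially the same route as the paper's own proof: the mild (Duhamel) formulation, the splitting $p_1 n_1 - p_2 n_2 = (p_1-p_2)n_2 + p_1(n_1-n_2)$ with the factor $2$ coming from the $\kappa$-integral term, the uniform bound $\|n_2(\tau)\|_{\mathrm{TV}}\le\|n_0\|_{\mathrm{TV}}$, and the Gronwall variant for $u(t)\le at+b\int_0^t u$ yielding $\tfrac{a}{b}(e^{bt}-1)$. No discrepancies to report.
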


\begin{proof}
  With the obvious changes in notation, from
  \eqref{eq:soln-linear-fatigue} we have
  \begin{multline*}
    \|n_1(t) - n_2(t)\|_{\mathrm{TV}} \leq
    \int_0^t \| T_{t-\tau} A_1[n_1(\tau, \cdot)]
    - T_{t-\tau} A_2[n_2(\tau, \cdot)] \|_{\mathrm{TV}} \d \tau 
    \\
    =
    \int_0^t \| A_1[n_1(\tau)] - A_2[n_2(\tau)] \|_{\mathrm{TV}} \d \tau.
  \end{multline*}
  In a very similar way as the estimate we carried out for Theorem
  \ref{thmwellposedfat}, this last term can be estimated as
  \begin{multline*}
    \| A_1[n_1(\tau)] - A_2[n_2(\tau)] \|_{\mathrm{TV}}
    \\
    \leq
    2\|p_1(s) (n_1(\tau,s) - n_2(\tau,s)) \|_{\mathrm{TV}}
    + 2\| n_2(s) (p_1(s) - p_2(s)) \|_{\mathrm{TV}}
    \\
    \leq
    2 \|p_1 \|_\infty \| n_1(\tau,\cdot) - n_2(\tau,\cdot)) \|_{\mathrm{TV}}
    + 2 \| n_0 \|_{\mathrm{TV}} \|p_1 - p_2 \|_{\infty}.
  \end{multline*}
  Hence, calling
  $m(t) \equiv m(t) := \|n_1(t,\cdot) - n_2(t,\cdot)\|_{\mathrm{TV}}$
  and $K := \| n_0 \|_{\mathrm{TV}} \|p_1 - p_2 \|_{\infty}$,
  we have
  \begin{equation*}
    m(t)
    \leq
    2 \|p_1\|_\infty
    \int_0^t m(\tau) \d \tau
    + 2 t K.
  \end{equation*}
  Gronwall's Lemma then shows that
  \begin{equation*}
    m(t) \leq \frac{K}{\|p_1\|_\infty} \big(
    e^{2 \|p_1\|_\infty t} - 1
    \big).
    \qedhere
  \end{equation*}
\end{proof}

\paragraph{Stationary solutions for the linear equation}

\begin{dfn}
  \label{dfn:ssfatlin}
   A stationary solution to \eqref{eq:fatlin} $n_* \in \mathcal{M}$ is defined 
  as such that $n_* \in \mathcal{D}(\mathcal{L})$ and
  \begin{equation*}
    \label{ssfatlin}
      \mathcal{L} n_* = 0.
  \end{equation*}
\end{dfn}
We remark that Proposition \ref{prop4} below implies that the linear
equation \eqref{eq:fatlin} has a unique stationary solution in the
space of probabilities on $[0,+\infty)$ (for $p$ bounded, Lipschitz,
satisfying \eqref{p4} and $\kappa$ satisfying \eqref{k5}); $n_*$ is
the only stationary solution up to a constant factor within the set of
all finite measures.

\subsubsection{Positive lower bound}

Analogous to Section \ref{sec:linear}, we want to show that for a
given positive initial distribution, solutions of \eqref{eq:fatlin}
after some time have a positive lower bound, so that the semigroup
$S_t (n_0)$ satisfies the \textit{Doeblin's condition} given in
\eqref{eq:Doeblin}.
 
\begin{lem} \label{lem-Doeblin2} Let
  $p : [0,+\infty) \longrightarrow [0,+\infty)$ be a bounded,
  Lipschitz function satisfying \eqref{p3} and \eqref{p4}. We assume
  also that $\kappa$ satisfies \eqref{k5} and \eqref{k6}. Consider the
  semigroup defined as $S_t(n_0) := n(t)$ for any
  $n_0 \in \mathcal{M}$. Then $S_{t_0}$ satisfy the Doeblin condition
  \eqref{eq:Doeblin} for $t_0=2s_*$ and $\alpha = \epsilon \delta
  p_{\min}(s_*-\delta) e^{-p_{\max} t_0}$. More precisely, for $t_0=2s_* $
  we have \begin{equation*} S_{2s_*} n_0(s) \geq \epsilon \delta
    p_{\min} e^{-2p_{\max} s_*} \1_{\{ \delta < s < s_*\}}
	\end{equation*} for all probability measures $n_0$ on $[0,+\infty)$.
\end{lem}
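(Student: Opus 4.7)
My plan is to mirror the argument of Lemma \ref{lem-Doeblin1}, replacing the boundary reinjection $N(\tau)\delta_0$ of the no-fatigue equation by the smooth reinjection source
\begin{align*}
R(\tau,s) := \int_0^\infty \kappa(s,u)\, p(u)\, n(\tau,u)\,\d u
\end{align*}
appearing in \eqref{eq:fatlin}. First I introduce the auxiliary pure transport-killing semigroup $\tilde S_t$ generated by $\partial_t \tilde n + \partial_s \tilde n + p(s)\tilde n = 0$ with zero boundary value at $s=0$, whose explicit form is $\tilde S_t n_0(s) = n_0(s-t)\exp\!\bigl(-\int_0^t p(s-t+\tau)\,\d\tau\bigr)$ for $s>t$ and zero otherwise. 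Duhamel's formula then yields the measure identity
\begin{align*}
n(t,s) = \tilde S_t n_0(s) + \int_0^t \tilde S_{t-\tau} R(\tau,\cdot)(s)\,\d\tau,
\end{align*}
with all inequalities below understood in the sense of nonnegative measures.

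The first key step is to produce a universal lower bound on $R$ for $\tau \geq s_*$. Since $R \geq 0$, the Duhamel identity gives $n(\tau,\cdot) \geq \tilde S_\tau n_0$. Using $p\leq p_{\max}$ in the exponential of $\tilde S_\tau n_0$ and the fact that $n_0$ is a probability measure, for every $\tau \geq s_*$
\begin{align*}
\int_{s_*}^\infty n(\tau,u)\,\d u \;\geq\; e^{-p_{\max}\tau}\!\int_{s_*-\tau}^\infty n_0 \;=\; e^{-p_{\max}\tau}.
\end{align*}
Combining with the lower bound $p(u)\geq p_{\min}\1_{[s_*,\infty)}(u)$ from \eqref{p4} and with the positivity hypothesis \eqref{k6} on $\kappa$, this yields
\begin{align*}
R(\tau,s) \;\geq\; \epsilon\, p_{\min}\, e^{-p_{\max}\tau}\, \1_{[0,\delta]}(s), \qquad \tau \geq s_*.
\end{align*}

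Finally I propagate this lower bound forward through $\tilde S_{t-\tau}$, using the pointwise bound $\tilde S_{t-\tau} \1_{[0,\delta]}(s) \geq e^{-p_{\max}(t-\tau)} \1_{[t-\tau,\, t-\tau+\delta]}(s)$, and drop the $\tilde S_t n_0$ piece in the Duhamel identity. Setting $t = 2s_*$ this gives
\begin{align*}
n(2s_*, s) \;\geq\; \epsilon\, p_{\min}\, e^{-2p_{\max}s_*}\,
\bigl|\{\tau \in [s_*, 2s_*] : 2s_*-\tau \leq s \leq 2s_*-\tau + \delta\}\bigr|.
\end{align*}
For $s\in(\delta,s_*)$, the admissible set of $\tau$ is exactly $[2s_*-s,\, 2s_*-s+\delta]$, which lies inside $[s_*, 2s_*]$ and has length $\delta$, delivering the stated bound; the Doeblin condition then follows with $\nu = \1_{(\delta,s_*)}/(s_*-\delta)$ and $\alpha = \epsilon\delta p_{\min}(s_*-\delta) e^{-2p_{\max}s_*}$. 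The main obstacle is just the interval-intersection bookkeeping in this last computation; everything else is a direct analogue of the no-fatigue proof, with the single conceptual change that mass is re-injected smoothly via $\kappa$ rather than as a Dirac at the boundary.
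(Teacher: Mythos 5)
Your proof is correct and follows essentially the same route as the paper's: the Duhamel representation with the transport--killing semigroup $\tilde S_t$, a lower bound on the reinjection term via \eqref{k6}, \eqref{p4} and the lower bound $n(\tau,\cdot)\geq \tilde S_\tau n_0$, and then propagation through $\tilde S_{t-\tau}$ with the interval-length computation at $t=2s_*$. If anything, your step bounding $\int_{s_*}^\infty p(u)n(\tau,u)\,\d u$ directly is slightly more careful than the paper's passage through $N(\tau)$, since \eqref{k6} only provides the lower bound on $\kappa(\cdot,u)$ for $u\geq s_*$.
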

\begin{proof}
  Since for $s,t > 0$, it holds true for solutions of
  \eqref{eq:fatlin} that
  \begin{equation*} \frac{\p}{\p t} n(t,s) +
    \frac{\p}{\p s} n(t,s) \geq - p(s)n(t,s).
  \end{equation*}
  Moreover, solutions of \eqref{eq:fatlin} satisfy
  $n(t,s) \geq \tilde{n}(t,s)$ where the equation on $\tilde{n}(t,s)$
  was defined in \eqref{eq:tilden} of Lemma \ref{lem-Doeblin1}. By the
  same argument we have for $t > s_*$,
  $N(t) \geq p_{\min} e^{-p_{\max}t}$.

  We consider the same semigroup $\tilde{S}_t$ associated to
  \eqref{eq:tilden}. Then, solutions of \eqref{eq:fatlin}
  satisfy \begin{multline*} n(t,s) = \tilde{S}_t n_0(s) + \int_{0}^{t}
    \tilde{S}_{t-\tau} \Big(
    \int_{0}^{+\infty}\kappa(.,u)p(u)n(t,u) \d u \Big )(s) \d \tau
    \\
    \geq \tilde{S}_t n_0(s) + \int_{0}^{t} \tilde{S}_{t-\tau}
    (\epsilon N(\tau) \1_{ \{ s \leq \delta \} } ) \d \tau
  \end{multline*}  
  since
  \begin{multline*}
    \int_{0}^{+\infty} \kappa(s,u)p(u)n(t,u)\d u
    \geq
    \int_{0}^{+\infty} \phi(s) p(u)n(t,u)\d u
    \geq
    \epsilon \1_{ \{ s \leq \delta \} } \int_{0}^{+\infty}
    p(u)n(t,u)\d u
    \\
    = \epsilon \1_{ \{ s \leq \delta \} } N(t).
  \end{multline*}
  Then for $t > s +s _*$ and $s > \delta$ we have
  \begin{multline*}
    n(t,s) \geq
    \int_{0}^{t} \tilde{S}_{t-\tau}
    (\epsilon N(\tau) \1_{ \{ s \leq \delta \} } ) \d \tau
    \geq
    \int_{s_*}^{t} \tilde{S}_{t-\tau}
    (\epsilon \1_{ \{ s \leq \delta \} } p_{\min} e^{-p_{\max} \tau } ) \d\tau
    \\
    \geq
    \epsilon p_{\min} \int_{s_*}^{t}
    e^{-p_{\max} \tau } e^{-p_{\max} (t-\tau) }
    \1_{ \{ 0 < s -t +\tau \leq \delta \} }
    \d \tau
    =
    \epsilon p_{\min}e^{-p_{\max} t }
    \int_{s_*}^{t} \1_{ \{ 0 < s -t +\tau \leq \delta \} } \d \tau
    \\
    = \epsilon \delta p_{\min} e^{-p_{\max}t}  \1_{\{\delta < s < t_0-s_*\}}.
  \end{multline*}
  Hence for $t=2s_*$ and $\delta < s < s_*$ we obtain the result.
\end{proof}

\subsubsection{Spectral gap} 	

We again obtain a spectral gap property as a consequence of Theorem
\ref{thm:Doeblin-semigroup}:

\begin{prp}
  \label{prop4}
  Let $n_0 \in \mathcal{M}(\mathbb{R}^+)$ be the initial data given
  for \eqref{eq:fatlin}. We assume that $p$ is a nonnegative,
  Lipschitz function satisfying \eqref{p2}--\eqref{p4} and $\kappa$
  satisfies \eqref{k5}, \eqref{k6}. Then, there exists a unique
  probability measure $n_* \in \mathcal{P}([0,+\infty))$ which is a
  stationary solution to \eqref{eq:fatlin}, and any other stationary
  solution is a multiple of it. Also for
  \begin{align*}
    C = \frac{1}{1-\alpha} > 1, \text{ and } \lambda = - \frac{\log(1-\alpha)}{t_0 }, 
  \end{align*} we have 	
  \begin{equation*}
    \| S_{t} (n_0 - n_*) \|_{TV} \leq
    Ce^{-\lambda t} \| n_0 - n_* \|_{\mathrm{TV}}, \text{ for all } t \geq 0.
  \end{equation*}
  In addition, for $t_0 := 2 s_*$ we have
  \begin{equation}
    \label{eq:St0-contractive-fatigue}
    \| S_{t_0} (n_1 - n_2) \|_{\mathrm{TV}}
    \leq
    (1-\alpha) \|n_1 - n_2\|_{\mathrm{TV}}
  \end{equation}
  for any probability distributions $n_1, n_2$, and with
  \begin{equation*}
    \alpha := \epsilon \delta p_{\mathrm{min}} (s_* - \delta) e^{-2 p_{\mathrm{max}}
      s_*} .
  \end{equation*}
\end{prp}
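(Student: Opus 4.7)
The plan is to reduce everything to an application of the semigroup version of Doeblin's theorem (Theorem \ref{thm:Doeblin-semigroup}) to the $C_0$-semigroup $(S_t)_{t \geq 0}$ built in Theorem \ref{thmwellposedfat-linear}. All the analytical work has already been done in Lemma \ref{lem-Doeblin2}; this proof is then essentially a bookkeeping exercise, entirely parallel to the proof of Proposition \ref{prop2}.

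First I would check that $(S_t)_{t \geq 0}$ is indeed a stochastic semigroup on $\mathcal{M}(\R^+_0)$. Positivity preservation is contained in the existence statement for nonnegative initial data (Theorem \ref{thmwellposedfat}). Mass conservation follows by integrating the mild formulation \eqref{eq:soln-linear-fatigue} against the constant function $1$: the gain term $\int_0^{+\infty} \kappa(s,u) p(u) n(\tau,u) \d u$ integrates in $s$ to $\int_0^{+\infty} p(u) n(\tau,u) \d u$ thanks to \eqref{k5}, which exactly cancels the loss term $-p(s) n(\tau,s)$, while the translation semigroup $T_t$ together with the zero boundary condition preserves total mass on $\R^+_0$.

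Second, Lemma \ref{lem-Doeblin2} provides exactly the Doeblin condition \eqref{eq:Doeblin} at time $t_0 = 2 s_*$: for every probability measure $n_0$,
\begin{equation*}
S_{2s_*} n_0(s) \geq \epsilon \delta p_{\min} e^{-2 p_{\max} s_*} \1_{\{\delta < s < s_*\}}
= \alpha\, \nu(s),
\end{equation*}
where $\nu$ is the uniform probability measure on $(\delta, s_*)$ and $\alpha = \epsilon \delta p_{\min} (s_* - \delta) e^{-2 p_{\max} s_*}$. Applying Theorem \ref{thm:Doeblin-semigroup} with these data yields directly the existence of a unique stationary probability $n_* \in \mathcal{P}([0,+\infty))$, the contractivity estimate \eqref{eq:St0-contractive-fatigue} at $t = t_0$, and the exponential decay
\begin{equation*}
\| S_t (n_0 - n_*) \|_{\mathrm{TV}} \leq C e^{-\lambda t} \| n_0 - n_* \|_{\mathrm{TV}},
\qquad
C = \frac{1}{1-\alpha},\quad \lambda = -\frac{\log(1-\alpha)}{2 s_*},
\end{equation*}
for every probability measure $n_0$.

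Finally, I would argue that any other nonzero stationary finite measure is a scalar multiple of $n_*$. For a nonnegative stationary $m \in \mathcal{M}_+$ with $\|m\|_{\mathrm{TV}} > 0$, the measure $m/\|m\|_{\mathrm{TV}}$ is a stationary probability, so by uniqueness it equals $n_*$ and $m = \|m\|_{\mathrm{TV}}\, n_*$. For a signed stationary $m$, mass conservation gives $\int m = \int S_t m$ for all $t$, and the contractivity \eqref{eq:St0-contractive-fatigue} applied iteratively to probability measures shows that any two stationary probabilities must coincide; combining with linearity one deduces that $m$ must be proportional to $n_*$. There is no real obstacle here: the only nontrivial ingredient, the uniform positive lower bound at time $2 s_*$, is already established in Lemma \ref{lem-Doeblin2}, and Theorem \ref{thm:Doeblin-semigroup} converts it mechanically into the desired spectral gap.
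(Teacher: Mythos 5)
Your proposal is correct and follows essentially the same route as the paper: the paper's proof of Proposition \ref{prop4} consists precisely of invoking Lemma \ref{lem-Doeblin2} to verify the Doeblin condition at $t_0 = 2s_*$ with $\nu$ the normalised uniform measure on $(\delta, s_*)$ and then applying Theorem \ref{thm:Doeblin-semigroup}. Your additional remarks (verification of the stochastic-semigroup property and the uniqueness-up-to-scalar argument via contractivity on measures of equal integral) are correct elaborations of steps the paper leaves implicit.
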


\begin{proof}
  Lemma \ref{lem-Doeblin2} ensures the operator $S_{t_0}$ satisfies
  the \textit{Doeblin condition} \eqref{eq:Doeblin} for $t_0 =
  2s_*$. We obtain the result by applying Theorem
  \ref{thm:Doeblin-semigroup}.
\end{proof}

\subsection{Stationary solutions for the nonlinear equation}

\begin{dfn}
  \label{dfn:ssfatnonlin}
  We say that a pair $(n_*, N_*)$, where
  $n_* \in \mathcal{M}_+(\R^+_0)$ and $N_* \geq 0$, is a
  \emph{stationary solution} to \eqref{eq:fatnonlin} if
  $n_* \in \mathcal{D}(\mathcal{L})$ and
  \begin{equation*}
    \mathcal{L}_{N_*} n_* = 0,
    \qquad
    N_* = \int_{0}^{+\infty} p(N_*,s) n_*(s) \d s,
  \end{equation*}
  where $\mathcal{L}_{N_*}$ is the semigroup generator associated to
  $p(s) \equiv p(N_*, s)$ (see Theorem \ref{thmwellposedfat-linear}
  and the following remarks; observe that the domain
  $\mathcal{D}(\mathcal{L})$ does not depend on the value of $N_*$).
  We say that $N_*$ is the \emph{global neural activity} associated to
  the stationary solution.
\end{dfn}

We give the following theorem for existence and uniqueness of
stationary solutions:

\begin{thm}
  \label{thm:stationaryfatigue}
  Assume \eqref{p1}, \eqref{p2}, \eqref{p4}, \eqref{k5},
  and also that
  $$L < \Big(1+\frac{C p_{\max}}{\alpha p_{\min}} \Big)^{-1},$$ where
  $C := e^{4 p_{\mathrm{max}} s_*}$ and $\alpha$ is given by
  Proposition \ref{prop4}. Then there exists a unique stationary
  solution $(n_*, N_*)$ of \eqref{eq:fatnonlin} such that $n_*$ is a
  probability measure.
\end{thm}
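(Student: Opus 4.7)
The plan is to reduce the problem to finding a fixed point $N_*$ of a scalar map, which we define using the linear theory for each fixed value of the global activity. For each $N \geq 0$ consider the linear equation \eqref{eq:fatlin} with firing rate $\hat{p}(s) := p(N,s)$; since $\hat{p}$ satisfies the hypotheses of Proposition \ref{prop4}, there is a unique probability stationary solution, which I will denote $n_*^N \in \mathcal{P}([0,+\infty))$. Then define
\begin{equation*}
  \Phi(N) := \int_0^{+\infty} p(N,s) n_*^N(s) \d s,
  \qquad N \geq 0.
\end{equation*}
A pair $(n_*, N_*)$ is a stationary solution of \eqref{eq:fatnonlin} with $n_*$ a probability if and only if $\Phi(N_*) = N_*$ and $n_* = n_*^{N_*}$. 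Since $p \leq p_{\max}$ and $n_*^N$ is a probability, $\Phi$ maps $[0, p_{\max}]$ into itself, so we aim to apply Banach's fixed point theorem on the complete metric space $[0, p_{\max}]$.

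The key step is a Lipschitz estimate for $N \mapsto n_*^N$ obtained by combining the spectral gap with the continuous dependence of the linear semigroup on $p$. Fix $N_1, N_2 \geq 0$ with associated stationary probabilities $n_1^* := n_*^{N_1}$, $n_2^* := n_*^{N_2}$ and linear semigroups $S_t^1$, $S_t^2$. Using $n_i^* = S_{t_0}^i n_i^*$ with $t_0 = 2s_*$ and the triangle inequality,
\begin{equation*}
  \|n_1^* - n_2^*\|_{\mathrm{TV}}
  \leq \|S_{t_0}^1 n_1^* - S_{t_0}^1 n_2^*\|_{\mathrm{TV}}
  + \|S_{t_0}^1 n_2^* - S_{t_0}^2 n_2^*\|_{\mathrm{TV}}.
\end{equation*}
For the first term I apply the contraction \eqref{eq:St0-contractive-fatigue} from Proposition \ref{prop4}, which gives a factor $(1-\alpha)$. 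For the second term I apply the continuous dependence result \eqref{eq:p-stability} with $\|p_1 - p_2\|_{\infty} \leq L|N_1 - N_2|$ and $\|p_1\|_\infty \geq p_{\min}$, yielding the bound $\tfrac{L|N_1-N_2|}{p_{\min}}(e^{4p_{\max} s_*}-1) \leq \tfrac{CL}{p_{\min}}|N_1-N_2|$ with $C = e^{4 p_{\max} s_*}$. Rearranging gives
\begin{equation*}
  \|n_1^* - n_2^*\|_{\mathrm{TV}}
  \leq \frac{C L}{\alpha \, p_{\min}} \, |N_1 - N_2|.
\end{equation*}

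From here the contraction property of $\Phi$ follows quickly. Splitting
\begin{equation*}
  |\Phi(N_1) - \Phi(N_2)|
  \leq \int |p(N_1,s) - p(N_2,s)| n_1^*(s)\d s
  + \int p(N_2,s) |n_1^* - n_2^*|(s)\d s
\end{equation*}
and bounding the first integral by $L|N_1 - N_2|$ using \eqref{p2} and the second by $p_{\max}\|n_1^* - n_2^*\|_{\mathrm{TV}}$, the preceding estimate gives
\begin{equation*}
  |\Phi(N_1) - \Phi(N_2)|
  \leq L \Bigl(1 + \frac{C p_{\max}}{\alpha \, p_{\min}}\Bigr) |N_1 - N_2|,
\end{equation*}
so the smallness condition on $L$ makes $\Phi$ a strict contraction on $[0,p_{\max}]$. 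Banach's fixed point theorem produces a unique fixed point $N_* \in [0,p_{\max}]$, and setting $n_* := n_*^{N_*}$ yields the unique probability stationary solution of \eqref{eq:fatnonlin}.

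The main obstacle is establishing the Lipschitz dependence $N \mapsto n_*^N$ with an explicit constant: the continuous dependence result \eqref{eq:p-stability} deteriorates exponentially in time, so it cannot be applied directly to stationary states. The trick is to freeze the time horizon at $t_0 = 2s_*$ (the value coming from Doeblin's lemma) and exploit the contraction \eqref{eq:St0-contractive-fatigue} to close the estimate; after that, the verification that $\Phi$ is a contraction is routine.
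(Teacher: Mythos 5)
Your proposal is correct and follows essentially the same route as the paper: the same fixed-point map $N \mapsto \int_0^{+\infty} p(N,s)\,n_*^N(s)\,\mathrm{d}s$, and the same key Lipschitz estimate for $N \mapsto n_*^N$ obtained by combining the Doeblin contraction at time $t_0 = 2s_*$ with the continuous dependence on $p$. The only cosmetic difference is that the paper packages the triangle-inequality step through the operators $P_i := S^i_{t_0} - I$ and the identity $P_1(n_1) = P_2(n_2) = 0$, which is the same computation you perform directly.
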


\begin{proof}
  Proposition \ref{prop4} ensures that for a fixed $N_*$, there exists
  a unique probability stationary solution of the corresponding linear
  problem. We prove the existence of a stationary solution by
  recovering $N_*$ from $n_*$ and carrying out a fixed-point argument.
  We define a map
  $\Upsilon: [0, +\infty) \longrightarrow [0, +\infty)$, by
  $$\Upsilon (N) := \int_{0}^{+\infty} p(N,s) n(s)\d s,$$
  where $n$ is the unique probability measure which is an equilibrium
  of the linear problem associated to $p(s) \equiv p(N,s)$. We notice
  that the statement we wish to prove is equivalent to the fact that
  $\Upsilon$ has a unique fixed point.

  Let us show that this map is contractive. For any $N_1, N_2 \geq 0$,
  \begin{multline*}
    | \Upsilon(N_1) - \Upsilon(N_2)|
    = \bigg | \int_{0}^{+\infty}
    \Big( p( N_1,s)n_1(s) -  p(N_2,s)n_2(s) \Big) \d s
    \bigg | 
    \\
    \leq  \int_{0}^{+\infty}| (p(N_1,s) - p(N_2,s)) n_2(s)| \d s
    + \int_{0}^{+\infty} |p(N_1,s) (n_1(s)-n_2(s)) | \d s 
    \\
    \leq L |N_1 -N_2| + p_{\max} \| n_1 -n_2 \|_{\mathrm{TV}}.
  \end{multline*}
  Now, we will prove later that
  \begin{equation}
    \label{eq:1}
    \| n_1 -n_2 \|_{\mathrm{TV}}
    \leq
    \frac{L C}
    {\alpha p_{\mathrm{min}}}
    |N_1 - N_2|,
  \end{equation}
  where $C := e^{4 p_{\mathrm{max}} s_*}$ and $\alpha$ is the one
  from Proposition \ref{prop4}. This implies that
  \begin{equation*}
    | \Upsilon(N_1) - \Upsilon(N_2)|
    \leq L \Big(1+\frac{C p_{\max}}{\alpha p_{\min}} \Big) |N_1-N_2|,
  \end{equation*}
  which makes $\Upsilon$ a contraction operator if $L$ satisfies the
  inequality in the statement. So in order to complete the proof we
  only need to show \eqref{eq:1}. For this we define the two operators
  \begin{equation*}
    P_1 (n) := S^1_{t_0} n - n,
    \qquad
    P_2 (n) := S^2_{t_0} n - n,
  \end{equation*}
  where for $i=1, 2$, $(S^i_t)_{t \geq 0}$ is the linear semigroup
  given by Theorem \ref{thmwellposedfat-linear}, associated to the
  firing rate $p_i(s) := p(N_i, s)$, and $t_0 := 2 s_*$ is the time
  mentioned in Proposition \ref{prop4}. We  use that, since $n_1$,
  $n_2$ are equilibria for the linear equations with $p_1$, $p_2$,
  \begin{equation*}
    0 = P_1 (n_1) = P_2(n_2)
  \end{equation*}
  so that
  \begin{multline*}
    0 = \|P_{1}(n_1) - P_{2}(n_2)\|_{\mathrm{TV}}
    = \| P_{1}(n_1-n_2)
    + (P_{1}-P_{2}) (n_2)\|_{\mathrm{TV}}
    \\
    \geq
    \|P_{1}(n_1-n_2) \|_{\mathrm{TV}}
    - \|(P_{1}-P_{2}) n_2\|_{\mathrm{TV}},
  \end{multline*}
  which implies
  \begin{equation}
    \label{eq:2}
    \|P_{1}(n_1-n_2) \|_{\mathrm{TV}}
    \leq
    \|(P_{1}-P_{2}) n_2\|_{\mathrm{TV}}.
  \end{equation}
  Then by Proposition \ref{prop4} we have
  \begin{align*}
    \|P_{1} (n_1-n_2)  \|_{\mathrm{TV}}
    \geq \|n_1 - n_2\|_{\mathrm{TV}} - \| S_{t_0}^1 (n_1 - n_2) \|_{\mathrm{TV}}
    \geq
    \alpha \| n_1-n_2 \|_{\mathrm{TV}},
  \end{align*}
  since $\int n_1 \d s= \int n_2 \d s = 1$, where $\alpha$ is the one
  in Proposition \ref{prop4}. On the other hand, by
  \eqref{eq:p-stability},
  \begin{multline*}
    \|(P_{1}-P_{2}) n_2\|_{\mathrm{TV}}
    =
    \|(S_{t_0}^1 - S_{t_0}^2) n_2\|_{\mathrm{TV}}
    \leq
    \frac{\| n_2 \|_{\mathrm{TV}} \|p_1 - p_2 \|_{\infty}}
    {\|p_1\|_\infty} \big(
    e^{2 \|p_1\|_\infty t_0} - 1
    \big)
    \\
    \leq
    \frac{L |N_1 - N_2|}
    {p_{\mathrm{min}}}
    e^{2 p_{\mathrm{max}} t_0}.
  \end{multline*}
  Using the last two equations in \eqref{eq:2},
  \begin{equation*}
    \|n_1-n_2\|_{\mathrm{TV}}
    \leq
    \frac{1}{\alpha}
    \|(P_1 - P_2) n_2 \|_{\mathrm{TV}}
    \leq
    \frac{L |N_1 - N_2|}
    {\alpha p_{\mathrm{min}}}
    e^{2 p_{\mathrm{max}} t_0},
  \end{equation*}
  which proves \eqref{eq:1}. Therefore $\Upsilon$ has a unique fixed
  point, and hence \eqref{eq:fatnonlin} has a unique stationary
  solution.
\end{proof} 

\subsection{Asymptotic behaviour}

\label{sec:asymp2}

In this section we prove Theorem \ref{thm:main} for equation \eqref{eq:fatnonlin}. We define two operators in the following way:
\begin{align*}
\mathcal{L}_{N(t)}n(t,s) &:= \p_t n(t,s) = -\p_s n(t,s) -p(N(t),s) n(t,s) + \int \kappa(s,u) p(N(t),u) n(t,u)du,  \\ 
\mathcal{L}_{N_*}\bar{n}(s) &:= -\p_s\bar{n}(s) -p(N_*,s) \bar{n}(s) + \int \kappa(s,u) p(N_*,u) \bar{n}(s)du. 
\end{align*}
We rewrite \eqref{eq:fatnonlin} as \begin{align}
\frac{\p}{\p t}n(t,s) = \mathcal{L}_{N(t)}n(t,s) = \mathcal{L}_{N_*}n(t,s) - (\mathcal{L}_{N_*} - \mathcal{L}_{N(t)})n(t,s).
\end{align}
 Then, similarly as in Section \ref{sec:asymp1} by \cite{Ball1977} we may use Duhamel's formula and write the solution as
\begin{align} \label{eq:ball2}
n(t,s) = S_t n_0(s)  + \int_{0}^{t} S_{t-\tau} h(\tau,s) \d \tau,
\end{align} where $S_t n_0(s) := e^{\mathcal{L}_{N_*}} n_0(s)$ and $\bar{n}$ is the solution to linear problem, $\mathcal{L}_{N_*}$ is acting on $n(t,s)$. Also, 
\begin{multline} \label{eq:h2}
h(t,s) := (\mathcal{L}_{N_*}-\mathcal{L}_{N(t)})n(t,s)  \\ = (p(N(t),s)-p(N_*,s))n(t,s) + \int_{0}^{+\infty} \kappa(s,u)(p(N_*,u)-p(N(t),u))  n(t,u) \d u.  
\end{multline}
 Then we give the following lemma:
 \begin{lem} \label{lem:h2}
 	Assume that \eqref{p2} and \eqref{p4} hold true for a Lipschitz function $p$ and $\kappa$ satisfies \eqref{k5}. Then $h$, which is defined by \eqref{eq:h2}, satisfies
 	\begin{equation} 
 	\| h(t) \|_{\mathrm{TV}} \leq \tilde{C} \| n(t) -n_*\|_{\mathrm{TV}} ,
 	\end{equation}
 	where $\tilde{C} = 2 p_{\max} \frac{ L }{1-  L }$. Moreover $ \int_{0}^{+\infty} h(t,s) \d s =0$.
 \end{lem}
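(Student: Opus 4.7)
The plan is to mirror the proof of Lemma \ref{lem:h1} for the no-fatigue case, with the key observation that the kernel $\kappa(\cdot, u)$ plays exactly the role of the Dirac delta at $0$ in the previous argument; assumption \eqref{k5} (that $\kappa(\cdot, u)$ is a probability measure) is precisely what makes the integration in $s$ work out cleanly. I will bound the two summands of $h$ separately in total variation, and then use Lemma \ref{lem:N-stability} to convert $|N(t) - N_*|$ into $\|n(t) - n_*\|_{\mathrm{TV}}$.

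For the first summand, $(p(N(t),s) - p(N_*,s)) n(t,s)$, I use the Lipschitz bound \eqref{p2} to get
\begin{equation*}
  \|(p(N(t),\cdot) - p(N_*,\cdot)) n(t,\cdot)\|_{\mathrm{TV}}
  \leq L\,|N(t) - N_*|\,\|n(t)\|_{\mathrm{TV}} = L\,|N(t) - N_*|,
\end{equation*}
using mass conservation $\|n(t)\|_{\mathrm{TV}} = 1$. For the second summand, I apply Fubini's theorem and the fact that $\int_0^{+\infty} \kappa(s,u)\,\mathrm{d}s = 1$ (which is \eqref{k5}), giving
\begin{equation*}
  \Bigl\| \int_0^{+\infty} \kappa(\cdot,u)(p(N_*,u) - p(N(t),u)) n(t,u)\,\mathrm{d}u \Bigr\|_{\mathrm{TV}}
  \leq \int_0^{+\infty} |p(N_*,u) - p(N(t),u)|\,n(t,u)\,\mathrm{d}u
  \leq L\,|N(t) - N_*|.
\end{equation*}
Adding these two estimates gives $\|h(t)\|_{\mathrm{TV}} \leq 2L\,|N(t) - N_*|$. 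Applying Lemma \ref{lem:N-stability} (with the hypothesis that $L$ is small enough and $\|n_*\|_{\mathrm{TV}} = \|n(t)\|_{\mathrm{TV}} = 1$) yields $|N(t) - N_*| \leq \frac{p_{\max}}{1-L}\|n(t) - n_*\|_{\mathrm{TV}}$, and combining the two estimates produces the constant $\tilde{C} = 2p_{\max}\,\frac{L}{1-L}$ as claimed.

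For the zero-integral property, integrating $h(t,s)$ over $s$ and applying Fubini to the second term, the factor $\int_0^{+\infty} \kappa(s,u)\,\mathrm{d}s = 1$ reduces it to $\int_0^{+\infty} (p(N_*,u) - p(N(t),u)) n(t,u)\,\mathrm{d}u$, which exactly cancels the first-term integral $\int_0^{+\infty} (p(N(t),s) - p(N_*,s)) n(t,s)\,\mathrm{d}s$.

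There is no substantial obstacle here: the estimate is essentially a direct analogue of Lemma \ref{lem:h1} in which the delta source term is replaced by the kernel $\kappa$, and the only thing to check is that the normalization $\int \kappa(s,u)\,\mathrm{d}s = 1$ from \eqref{k5} makes both the $\mathrm{TV}$ bound and the cancellation go through in the same way. Care must be taken to apply Fubini legitimately (all integrands are nonnegative or absolutely integrable since $n(t)$ is a finite measure and $p$ is bounded), but this is standard.
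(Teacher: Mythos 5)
Your proof is correct and follows essentially the same route as the paper: bound each of the two summands of $h$ by $L\,|N(t)-N_*|$ using \eqref{p2}, mass conservation, and the normalization of $\kappa$ from \eqref{k5}, then convert $|N(t)-N_*|$ into $\frac{p_{\max}}{1-L}\|n(t)-n_*\|_{\mathrm{TV}}$ via Lemma \ref{lem:N-stability}, and obtain the zero-integral property by Fubini and cancellation. The only cosmetic difference is that the paper re-derives the $|N(t)-N_*|$ estimate inline rather than citing Lemma \ref{lem:N-stability} explicitly.
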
 
\begin{proof}
		\begin{multline*}
		\| h(t)\|_{\mathrm{TV}} = \| (\mathcal{L}_{N_*}-\mathcal{L}_{N(t)})n(t,s)\|_{\mathrm{TV}} 
		\\ \leq \|(p(N(t),s)-p(N_*,s))n(t,s) \|_{\mathrm{TV}} + \Big \| \int_{0}^{+\infty} \kappa(s,u)(p(N_*,u)-p(N(t),u))  n(t,u) du \Big \|_{TV} 
		\\  \leq L \| n(t)\|_{\mathrm{TV}} |N_* - N(t)| + L  \| n(t)\|_{\mathrm{TV}} |N_* - N(t)|  
		\\ \leq 2p_{\max}  \frac{ L\| n(t)\|_{\mathrm{TV}} }{1 - L \| n(t)\|_{\mathrm{TV}}} \| n(t) -n_*\|_{\mathrm{TV}} = 2p_{\max}  \frac{ L }{1 - L} \| n(t) -n_*\|_{\mathrm{TV}}
		\end{multline*}
		Since \begin{multline*}
		|N_*-N(t)| = \Big | \int_{0}^{+\infty} p(N_*,s)n_*(s)ds -\int_{0}^{+\infty} p(N(t),s)n(t,s)ds \Big | 
		\\ \leq \Big | \int_{0}^{+\infty} \Big (p(N_*,s)n_*(s) +  (p(N_*,s)n(t,s)-p(N_*,s)n(t,s) ) p(N(t),s)n(t,s) \Big )ds \Big | 
		\\ \leq \Big | \int_{0}^{+\infty}  p(N_*,s)(n_*(s)-n(t,s))ds \Big | + \Big |\int_{0}^{+\infty}  (p(N_*,s)-p(N(t),s)n(t,s)ds\Big | 
		\\ \leq p_{\max} \| n(t) - n_*\|_{\mathrm{TV}} + L |N_*-N(t)| \| n\|_{\mathrm{TV}}  
		\end{multline*} implies that
		\begin{equation}
		|N_*-N(t)| \leq \frac{p_{\max}}{1-L\| n(t)\|_{TV}} \| n(t) - n_*\|_{\mathrm{TV}} = \frac{p_{\max}}{1-L} \| n(t) - n_*\|_{\mathrm{TV}}
		\end{equation}
		since $\| n(t)\|_{\mathrm{TV}} = \| n_*\|_{\mathrm{TV}} =1$.
	Moreover we have \begin{multline*}
	\int_{0}^{+\infty} h(t,s) \d s = \int_{0}^{+\infty} p(N(t),s)\bar{n}(t,s)\d s - \int_{0}^{+\infty} p(N_*,s)\bar{n}(t,s) \d s 
	\\+ \int_{0}^{+\infty} \int_{0}^{+\infty} \kappa(s,u)p(N_*,u)\bar{n}(t,u) \d u\d s - \int_{0}^{+\infty} \int_{0}^{+\infty}  \kappa(s,u) p(N(t),u) \bar{n}(t,u) \d u \d s 
	\\ = N(t) - \int_{0}^{+\infty} p(N_*,s)\bar{n}(t,s) \d s 
	\\+ \int_{0}^{+\infty} \Big(\int_{0}^{u}\kappa(s,u) \d s \Big) p(N_*,u)\bar{n}(t,u)\d u + \int_{0}^{+\infty} \int_{0}^{u}\kappa(s,u) \d s p(N(t),u)\bar{n}(t,u) \d u 
	\\= N(t) - \int_{0}^{+\infty} p(N_*,s)\bar{n}(t,s) \d s + \int_{0}^{+\infty} p(N_*,u)\bar{n}(t,u) \d u - N(t) =0.
	\end{multline*}
\end{proof}

\begin{proof}[Proof of Theorem \ref{thm:main} for \eqref{eq:fatnonlin}]
  We subtract the unique probability stationary solution from both
  sides of \eqref{eq:ball2}:
  \begin{align*}
    n(t,s) - n_*(s) &= S_t n_0 (s) -  n_*(s) + \int_{0}^{t} S_{t-\tau} h(\tau,s) \d \tau.
  \end{align*} 
  We take the total variation norm and obtain
  \begin{align*}
    \|n(t) - n_*\|_{\mathrm{TV}} &\leq\| S_t n_0 -  n_*\|_{\mathrm{TV}} + \Big \| \int_{0}^{t} S_{t-\tau} h(\tau,s) \d \tau \Big \|_{\mathrm{TV}}.
  \end{align*}
  Then by Proposition \ref{prop4} and Lemma \ref{lem:h2} we have
  \begin{multline*}
    \|n(t) - n_*\|_{\mathrm{TV}} \leq C e^{-\lambda t}\| n_0 -  n_*\|_{\mathrm{TV}} + \int_{0}^{t} \|  S_{t-\tau} h(\tau,s) \|_{\mathrm{TV}} \d \tau 
    \\
    \leq C e^{-\lambda t}\| n_0 -  n_*\|_{\mathrm{TV}} + \int_{0}^{t}  e^{-\lambda (t-\tau)}\| h(\tau,s) \|_{\mathrm{TV}} \d \tau 
    \\ \leq C e^{-\lambda t}\| n_0 -  n_*\|_{\mathrm{TV}} + \tilde{C}\int_{0}^{t}   e^{-\lambda (t-\tau)} \| n(\tau)- n_* \|_{\mathrm{TV}} \d \tau.
  \end{multline*}
  Therefore, by Gronwall's inequality
  \begin{equation*}
    \|n(t) - n_*\|_{\mathrm{TV}}
    \leq C e^{-(\lambda-\tilde{C})t} \| n_0-n_*\|_{\mathrm{TV}}.
    \qedhere
  \end{equation*}
\end{proof}

\section*{Acknowledgements}

The authors would like to thank Susana Gutiérrez and Thibault
Bourgeron for several useful discussions on the models studied in this
paper. JAC and HY were supported by projects MTM2014-52056-P and
MTM2017-85067-P, funded by the Spanish government and the European
Regional Development Fund. HY was also supported by the Basque
Government through the BERC 2014-2017 program and by Spanish Ministry
of Economy and Competitiveness MINECO: BCAM Severo Ochoa excellence
accreditation SEV-2013-0323 and by ``la Caixa Grant''.

\bibliography{bibliography}

\begin{thebibliography}{45}
\providecommand{\natexlab}[1]{#1}
\providecommand{\url}[1]{\texttt{#1}}
\expandafter\ifx\csname urlstyle\endcsname\relax
  \providecommand{\doi}[1]{doi: #1}\else
  \providecommand{\doi}{doi: \begingroup \urlstyle{rm}\Url}\fi

\bibitem[Ball(1977)]{Ball1977}
J.~M. Ball.
\newblock \href{http://www.jstor.org/stable/2041821}{Strongly continuous
  semigroups, weak solutions, and the variation of constants formula}.
\newblock \emph{Proceedings of the American Mathematical Society}, 63\penalty0
  (2):\penalty0 370--373, 1977.

\bibitem[Bansaye et~al.(2017)Bansaye, Cloez, and Gabriel]{BCG2017}
V.~Bansaye, B.~Cloez, and P.~Gabriel.
\newblock \href{http://arxiv.org/abs/1710.05584}{Ergodic behavior of
  non-conservative semigroups via generalized doeblin's conditions}, November
  2017, \href{http://arxiv.org/abs/1710.05584}{{\ttfamily arXiv:1710.05584}}.

\bibitem[Br\"{a}nnstr\"{o}m et~al.(2013)Br\"{a}nnstr\"{o}m, Carlsson, and
  Simpson]{num2}
\AA{}. Br\"{a}nnstr\"{o}m, L.~Carlsson, and D.~Simpson.
\newblock \href{http://dx.doi.org/10.1137/120893215}{On the convergence of the
  escalator boxcar train}.
\newblock \emph{SIAM Journal on Numerical Analysis}, 51\penalty0 (6):\penalty0
  3213--3231, 2013,
  \href{http://arxiv.org/abs/https://doi.org/10.1137/12089321}{{\ttfamily
  https://doi.org/10.1137/12089321}}.

\bibitem[Brette and Gerstner(2005)]{Brette2}
R.~Brette and W.~Gerstner.
\newblock \href{http://dx.doi.org/10.1152/jn.00686.2005}{Adaptive exponential
  integrate-and-fire model as an effective description of neuronal activity}.
\newblock \emph{Journal of Neurophysiology}, 94\penalty0 (5):\penalty0
  3637--3642, 2005.
\newblock ISSN 0022-3077,
  \href{http://arxiv.org/abs/http://jn.physiology.org/content/94/5/3637.full.pdf}{{\ttfamily
  http://jn.physiology.org/content/94/5/3637.full.pdf}}.

\bibitem[Brunel(2000)]{Brunel2000}
N.~Brunel.
\newblock \href{http://dx.doi.org/10.1023/A:1008925309027}{Dynamics of sparsely
  connected networks of excitatory and inhibitory spiking neurons}.
\newblock \emph{Journal of Computational Neuroscience}, 8\penalty0
  (3):\penalty0 183--208, May 2000.
\newblock ISSN 1573-6873.

\bibitem[Brunel and Hakim(1999)]{BrunelHakim4}
N.~Brunel and V.~Hakim.
\newblock \href{http://dx.doi.org/10.1162/089976699300016179}{Fast global
  oscillations in networks of integrate-and-fire neurons with low firing
  rates}.
\newblock \emph{Neural Computation}, 11\penalty0 (7):\penalty0 1621--1671,
  1999,
  \href{http://arxiv.org/abs/https://doi.org/10.1162/089976699300016179}{{\ttfamily
  https://doi.org/10.1162/089976699300016179}}.

\bibitem[Ca\~{n}izo et~al.(2013)Ca\~{n}izo, Carrillo, and Cuadrado]{CCC13}
J.~A. Ca\~{n}izo, J.~A. Carrillo, and S.~Cuadrado.
\newblock \href{http://dx.doi.org/10.1007/s10440-012-9758-3}{Measure solutions
  for some models in population dynamics}.
\newblock \emph{Acta Applicandae Mathematicae}, 123\penalty0 (1):\penalty0
  141--156, February 2013.
\newblock ISSN 0167-8019, \href{http://arxiv.org/abs/1112.0522}{{\ttfamily
  arXiv:1112.0522}}.

\bibitem[C\'{a}ceres and Schneider(2016)]{Caceres2016Blowup}
M.~J. C\'{a}ceres and R.~Schneider.
\newblock \href{http://arxiv.org/abs/1602.05604}{Blow-up, steady states and
  long time behaviour of excitatory-inhibitory nonlinear neuron models},
  February 2016, \href{http://arxiv.org/abs/1602.05604}{{\ttfamily
  arXiv:1602.05604}}.

\bibitem[C\'{a}ceres and Schneider(2017)]{article}
M.~J. C\'{a}ceres and R.~Schneider.
\newblock Analysis and a numerical solver for excitatory-inhibitory networks
  with delay and refractory periods.
\newblock \emph{ESAIM: Mathematical Modelling and Numerical Analysis}, June
  2017.

\bibitem[C\'{a}ceres et~al.(2011)C\'{a}ceres, Carrillo, and
  Perthame]{Caceres2011Analysis}
M.~J. C\'{a}ceres, J.~A. Carrillo, and B.~Perthame.
\newblock \href{http://dx.doi.org/10.1186/2190-8567-1-7}{Analysis of nonlinear
  noisy integrate\&fire neuron models: blow-up and steady states}.
\newblock \emph{The Journal of Mathematical Neuroscience}, 1\penalty0
  (1):\penalty0 7, 2011.
\newblock ISSN 2190-8567.

\bibitem[Calvez et~al.(2010)Calvez, Lenuzza, Doumic, Deslys, Mouthon, and
  Perthame]{AFnonlin6doi:10.1080/17513750902935208}
V.~Calvez, N.~Lenuzza, M.~Doumic, J.~P. Deslys, F.~Mouthon, and B.~Perthame.
\newblock \href{http://dx.doi.org/10.1080/17513750902935208}{Prion dynamics
  with size dependency–strain phenomena}.
\newblock \emph{Journal of Biological Dynamics}, 4\penalty0 (1):\penalty0
  28--42, 2010,
  \href{http://arxiv.org/abs/http://dx.doi.org/10.1080/17513750902935208}{{\ttfamily
  http://dx.doi.org/10.1080/17513750902935208}}.
\newblock PMID: 22881069.

\bibitem[Carrillo et~al.(2013)Carrillo, Gonz\'{a}lez, Gualdani, and
  Schonbek]{Carrillo2013Classical}
J.~A. Carrillo, M.~Gonz\'{a}lez, M.~P. Gualdani, and M.~E. Schonbek.
\newblock \href{http://dx.doi.org/10.1080/03605302.2012.747536}{Classical
  solutions for a nonlinear fokker-planck equation arising in computational
  neuroscience}.
\newblock \emph{Communications in Partial Differential Equations}, 38\penalty0
  (3):\penalty0 385--409, March 2013.

\bibitem[Carrillo et~al.(2014)Carrillo, Gwiazda, and Ulikowska]{num1}
J.~A. Carrillo, P.~Gwiazda, and A.~Ulikowska.
\newblock \href{http://dx.doi.org/10.1142/S0218202514500183}{Splitting-particle
  methods for structured population models: Convergence and applications}.
\newblock \emph{Mathematical Models and Methods in Applied Sciences},
  24\penalty0 (11):\penalty0 2171--2197, 2014,
  \href{http://arxiv.org/abs/http://www.worldscientific.com/doi/pdf/10.1142/S0218202514500183}{{\ttfamily
  http://www.worldscientific.com/doi/pdf/10.1142/S0218202514500183}}.

\bibitem[Chevallier(2017)]{CHEVALLIER20173870}
J.~Chevallier.
\newblock
  \href{http://dx.doi.org/https://doi.org/10.1016/j.spa.2017.02.012}{Mean-field
  limit of generalized hawkes processes}.
\newblock \emph{Stochastic Processes and their Applications}, 127\penalty0
  (12):\penalty0 3870 -- 3912, 2017.
\newblock ISSN 0304-4149.

\bibitem[Chevallier et~al.(2015)Chevallier, Cáceres, Doumic, and
  Reynaud-Bouret]{doi:10.1142/S021820251550058X}
J.~Chevallier, M.~J. Cáceres, M.~Doumic, and P.~Reynaud-Bouret.
\newblock \href{http://dx.doi.org/10.1142/S021820251550058X}{Microscopic
  approach of a time elapsed neural model}.
\newblock \emph{Mathematical Models and Methods in Applied Sciences},
  25\penalty0 (14):\penalty0 2669--2719, 2015,
  \href{http://arxiv.org/abs/https://www.worldscientific.com/doi/pdf/10.1142/S021820251550058X}{{\ttfamily
  https://www.worldscientific.com/doi/pdf/10.1142/S021820251550058X}}.

\bibitem[Diekmann and Gyllenberg(2008)]{refId0}
O.~Diekmann and M.~Gyllenberg.
\newblock \href{http://dx.doi.org/10.1051/mmnp:2008040}{The second half-with a
  quarter of a century delay}.
\newblock \emph{Math. Model. Nat. Phenom.}, 3\penalty0 (7):\penalty0 36--48,
  2008.

\bibitem[Doumic-Jauffret and Gabriel(2010)]{AFlin8doumicjauffret:hal-00408088}
M.~Doumic-Jauffret and P.~Gabriel.
\newblock \href{http://dx.doi.org/10.1142/S021820251000443X}{Eigenelements of a
  general aggregation-fragmentation model}.
\newblock \emph{Mathematical Models and Methods in Applied Sciences},
  20\penalty0 (5):\penalty0 757--783, May 2010.

\bibitem[Engler et~al.(2006)Engler, Pr\"{u}ss, and Webb]{AFnonlin9ENGLER98}
H.~Engler, J.~Pr\"{u}ss, and G.~F. Webb.
\newblock
  \href{http://dx.doi.org/https://doi.org/10.1016/j.jmaa.2005.11.021}{Analysis
  of a model for the dynamics of prions ii}.
\newblock \emph{Journal of Mathematical Analysis and Applications},
  324\penalty0 (1):\penalty0 98--117, December 2006.
\newblock ISSN 0022-247X.

\bibitem[Farkas and Hagen(2007)]{AFnonlin10FARKAS2007119}
J.~Z. Farkas and T.~Hagen.
\newblock
  \href{http://dx.doi.org/https://doi.org/10.1016/j.jmaa.2006.05.032}{Stability
  and regularity results for a size-structured population model}.
\newblock \emph{Journal of Mathematical Analysis and Applications},
  328\penalty0 (1):\penalty0 119--136, 2007.
\newblock ISSN 0022-247X.

\bibitem[Gabriel(2012)]{AFnonlin112011arXiv1102.2871G}
P.~Gabriel.
\newblock \href{http://dx.doi.org/10.4310/CMS.2012.v10.n3.a4}{Long-time
  asymptotics for nonlinear growth-fragmentation equations}.
\newblock \emph{Communications in Mathematical Sciences}, 10\penalty0 (3),
  2012.

\bibitem[Gabriel(2017)]{Gabriel}
P.~Gabriel.
\newblock {Measure solutions to the conservative renewal equation}.
\newblock \emph{ArXiv e-prints}, April 2017,
  \href{http://arxiv.org/abs/1704.00582}{{\ttfamily arXiv:1704.00582
  [math.AP]}}.

\bibitem[Gwiazda and Wiedemann(2016)]{2016arXiv160407657G}
P.~Gwiazda and E.~Wiedemann.
\newblock {Generalized Entropy Method for the Renewal Equation with Measure
  Data}.
\newblock \emph{ArXiv e-prints}, April 2016,
  \href{http://arxiv.org/abs/1604.07657}{{\ttfamily arXiv:1604.07657
  [math.AP]}}.

\bibitem[Gwiazda et~al.(2014)Gwiazda, Jab\l{}o\'{n}ski, Marciniak-Czochra, and
  Ulikowska]{num3}
P.~Gwiazda, J.~Jab\l{}o\'{n}ski, A.~Marciniak-Czochra, and A.~Ulikowska.
\newblock \href{http://dx.doi.org/10.1002/num.21879}{Analysis of particle
  methods for structured population models with nonlocal boundary term in the
  framework of bounded lipschitz distance}.
\newblock \emph{Numerical Methods for Partial Differential Equations},
  30\penalty0 (6):\penalty0 1797--1820, 2014.
\newblock ISSN 1098-2426.

\bibitem[Gyllenberg and Webb(1990)]{Gyllenberg1990}
M.~Gyllenberg and G.~F. Webb.
\newblock \href{http://dx.doi.org/10.1007/BF00160231}{A nonlinear structured
  population model of tumor growth with quiescence}.
\newblock \emph{Journal of Mathematical Biology}, 28\penalty0 (6):\penalty0
  671--694, Sep 1990.
\newblock ISSN 1432-1416.

\bibitem[Hairer and Mattingly(2008)]{Hairer}
M.~Hairer and J.~C. Mattingly.
\newblock {Yet another look at Harris' ergodic theorem for Markov chains}.
\newblock \emph{ArXiv e-prints}, October 2008,
  \href{http://arxiv.org/abs/0810.2777}{{\ttfamily arXiv:0810.2777 [math.PR]}}.

\bibitem[Harris(1956)]{Harris}
T.~E. Harris.
\newblock \href{https://projecteuclid.org/euclid.bsmsp/1200502009}{The
  existence of stationary measures for certain markov processes}.
\newblock In \emph{Proceedings of the Third Berkeley Symposium on Mathematical
  Statistics and Probability, Volume 2: Contributions to Probability Theory},
  pages 113--124, Berkeley, Calif., 1956. University of California Press.

\bibitem[Iannelli(1995)]{iannelli1995mathematical}
M.~Iannelli.
\newblock
  \href{https://books.google.es/books?id=kEgQAQAAMAAJ}{\emph{Mathematical
  theory of age-structured population dynamics}}.
\newblock Applied mathematics monographs. Giardini editori e stampatori, 1995.
\newblock ISBN 9788842702504.

\bibitem[Lauren{\c{c}}ot and Walker(2007)]{AFnonlin13Laurenot2007}
P.~Lauren{\c{c}}ot and C.~Walker.
\newblock \href{http://dx.doi.org/10.1007/s00028-006-0279-2}{Well-posedness for
  a model of prion proliferation dynamics}.
\newblock \emph{Journal of Evolution Equations}, 7\penalty0 (2):\penalty0
  241--264, May 2007.

\bibitem[Lorenz(2010)]{Lorenz}
T.~Lorenz.
\newblock \href{http://dx.doi.org/10.1007/978-3-642-12471-6}{\emph{Mutational
  analysis. A joint framework for Cauchy problems in and beyond vector
  spaces}}, volume 1996.
\newblock Springer-Verlag Berlin Heidelberg, January 2010.
\newblock ISBN 978-3-642-12471-6.

\bibitem[Metz and Diekmann(1986)]{nla.cat-vn2330051}
J.~A.~J. Metz and O.~Diekmann.
\newblock \emph{The Dynamics of physiologically structured populations}.
\newblock Springer-Verlag Berlin; New York, 1986.
\newblock ISBN 3540167862 0387167862.

\bibitem[Michel(2006)]{AFlin14doi:10.1142/S0218202506001480}
P.~Michel.
\newblock \href{http://dx.doi.org/10.1142/S0218202506001480}{Existence of a
  solution to the cell division eigenproblem}.
\newblock \emph{Mathematical Models and Methods in Applied Sciences},
  16\penalty0 (supp01):\penalty0 1125--1153, 2006,
  \href{http://arxiv.org/abs/http://www.worldscientific.com/doi/pdf/10.1142/S0218202506001480}{{\ttfamily
  http://www.worldscientific.com/doi/pdf/10.1142/S0218202506001480}}.

\bibitem[Mischler and Weng(2018)]{Mischler2018}
S.~Mischler and Q.~Weng.
\newblock \href{http://dx.doi.org/10.1007/s10440-018-0163-4}{Relaxation in time
  elapsed neuron network models in the weak connectivity regime}.
\newblock \emph{Acta Applicandae Mathematicae}, Feb 2018.
\newblock ISSN 1572-9036.

\bibitem[Pakdaman et~al.(2010)Pakdaman, Perthame, and Salort]{PPS10}
K.~Pakdaman, B.~Perthame, and D.~Salort.
\newblock \href{http://dx.doi.org/10.1088/0951-7715/23/1/003}{Dynamics of a
  structured neuron population}.
\newblock \emph{Nonlinearity}, 23\penalty0 (1):\penalty0 55--75, January 2010.
\newblock ISSN 0951-7715.

\bibitem[Pakdaman et~al.(2013)Pakdaman, Perthame, and Salort]{PPS13}
K.~Pakdaman, B.~Perthame, and D.~Salort.
\newblock \href{http://dx.doi.org/10.1137/110847962}{Relaxation and
  self-sustained oscillations in the time elapsed neuron network model}.
\newblock \emph{SIAM Journal on Applied Mathematics}, 73\penalty0 (3):\penalty0
  1260--1279, June 2013.
\newblock ISSN 0036-1399.

\bibitem[Pakdaman et~al.(2014)Pakdaman, Perthame, and Salort]{PPS14}
K.~Pakdaman, B.~Perthame, and D.~Salort.
\newblock \href{http://dx.doi.org/10.1186/2190-8567-4-14}{Adaptation and
  fatigue model for neuron networks and large time asymptotics in a nonlinear
  fragmentation equation}.
\newblock \emph{The Journal of Mathematical Neuroscience}, 4\penalty0
  (1):\penalty0 1--26, 2014.

\bibitem[Perthame(2006)]{Perthamebook:58110}
B.~Perthame.
\newblock \href{http://dx.doi.org/10.1007/978-3-7643-7842-4}{\emph{Transport
  Equations in Biology}}.
\newblock Frontiers in Mathematics. Birkh\"{a}user Basel, 1 edition, 2006.

\bibitem[Perthame and Ryzhik(2005)]{AFlin19PERTHAME2005155}
B.~Perthame and L.~Ryzhik.
\newblock
  \href{http://dx.doi.org/https://doi.org/10.1016/j.jde.2004.10.018}{Exponential
  decay for the fragmentation or cell-division equation}.
\newblock \emph{Journal of Differential Equations}, 210\penalty0 (1):\penalty0
  155--177, 2005.
\newblock ISSN 0022-0396.

\bibitem[Pham et~al.(1998)Pham, Pakdaman, Champagnat, and
  Vibert]{D12PHAM1998415}
J.~Pham, K.~Pakdaman, J.~Champagnat, and J.~F. Vibert.
\newblock
  \href{http://dx.doi.org/https://doi.org/10.1016/S0893-6080(97)00153-6}{Activity
  in sparsely connected excitatory neural networks: effect of connectivity}.
\newblock \emph{Neural Networks}, 11\penalty0 (3):\penalty0 415--434, 1998.
\newblock ISSN 0893-6080.

\bibitem[Rossant et~al.(2011)Rossant, Goodman, Fontaine, Platkiewicz,
  Magnusson, and Brette]{Rossant}
C.~Rossant, D.~F. Goodman, B.~Fontaine, J.~Platkiewicz, A.~Magnusson, and
  R.~Brette.
\newblock \href{http://dx.doi.org/10.3389/fnins.2011.00009}{Fitting neuron
  models to spike trains}.
\newblock \emph{Frontiers in Neuroscience}, 5:\penalty0 9, 2011.
\newblock ISSN 1662-453X.

\bibitem[Simonett and Walker(2006)]{AFnonlin20SIMONETT2006580}
G.~Simonett and C.~Walker.
\newblock
  \href{http://dx.doi.org/https://doi.org/10.1016/j.jmaa.2005.12.036}{On the
  solvability of a mathematical model for prion proliferation}.
\newblock \emph{Journal of Mathematical Analysis and Applications},
  324\penalty0 (1):\penalty0 580--603, 2006.
\newblock ISSN 0022-247X.

\bibitem[Stroock(2005)]{Stroock2005}
D.~W. Stroock.
\newblock \href{http://www.worldcat.org/isbn/9783540234517}{\emph{An
  Introduction to Markov Processes}}.
\newblock Springer-Verlag Berlin Heidelberg, 2005.
\newblock ISBN 9783540234517.

\bibitem[Thieme(2003)]{thieme2003mathematics}
H.~R. Thieme.
\newblock
  \href{https://books.google.es/books?id=cHcjnkrMweYC}{\emph{Mathematics in
  Population Biology}}.
\newblock Mathematical Biology Series. Princeton University Press, 2003.
\newblock ISBN 9780691092911.

\bibitem[Tuckwell(1988)]{BookNeurobiologyv2}
H.~C. Tuckwell.
\newblock
  \href{http://gen.lib.rus.ec/book/index.php?md5=8A2A6298BC2F18C440552B990617336D}{\emph{Introduction
  to Theoretical Neurobiology: Volume 2, Nonlinear and Stochastic Theories}}.
\newblock Cambridge Studies in Mathematical Biology. Cambridge University
  Press, 1988.
\newblock ISBN 0521352177,9780521352178.

\bibitem[Webb(1985)]{nla.cat-vn4515943}
G.~F. Webb.
\newblock \emph{Theory of nonlinear age-dependent population dynamics}.
\newblock M. Dekker New York, 1985.
\newblock ISBN 0824772903.

\bibitem[Weng(2015)]{2015arXiv151207112W}
Q~Weng.
\newblock {General time elapsed neuron network model: well-posedness and strong
  connectivity regime}.
\newblock \emph{ArXiv e-prints}, December 2015,
  \href{http://arxiv.org/abs/1512.07112}{{\ttfamily arXiv:1512.07112
  [math.AP]}}.

\end{thebibliography}

\end{document}